\setlist[enumerate]{font={\upshape}, label=\arabic*., leftmargin=2.5em}
\setlist[itemize]{leftmargin=2.5em}
\setlist[description]{leftmargin=\parindent, 
	%	topsep=4pt, 
	itemsep=3pt
}
\newlist{equivlist}{enumerate}{1}
\setlist[equivlist]{font={\upshape}, label=(\roman*)}
\tikzset{ 
	table/.style={
		matrix of nodes,
		nodes={rectangle,text width=1.75em,align=center},
		text depth=1.25ex,
		text height=2.5ex,
		nodes in empty cells
	}
}
\newtheorem{theorem}{Theorem}[section]
\newtheorem{lemma}[theorem]{Lemma}
\newtheorem{claim}{Claim}[theorem]
\Crefname{claim}{Claim}{Claims}
\newlist{lemlist}{enumerate}{1}
\setlist[lemlist]{font={\upshape}, label={\upshape(\alph*)},ref={\thelemma(\alph*)},leftmargin=*}
\newtheorem{conjecture}[theorem]{Conjecture}
\Crefname{conjecture}{Conjecture}{Conjectures}
\let\expandafter\oldproof\csname\string\proof\endcsname
\let\oldendproof\endproof
\renewenvironment{proof}[1][\proofname]{%
	\oldproof[\normalfont\bfseries #1]%
}{\oldendproof}
\newenvironment{subproof}[1][\normalfont\it Subproof]{%
	\begin{proof}[#1]%
	}{%
	\end{proof}%
}
\newcommand{\dd}{\textquotedblleft}
\newcommand{\ee}{\textquotedblright}
\newcommand{\mac}{\mathcal}
\newcommand{\mab}{\mathbb}
\newcommand{\eps}{\varepsilon}
\renewcommand{\subset}{\subseteq}
\newcommand{\erh}{Erd\H{o}s--Hajnal}
\newcommand{\gas}{Gy\'{a}rf\'{a}s--Sumner}
\newcommand{\chis}{\chi^*}
\DeclarePairedDelimiter\abs{\lvert}{\rvert}%
\DeclarePairedDelimiter\ceil{\lceil}{\rceil}%
\newcommand{\leqnomode}{\tagsleft@true}
\newcommand{\reqnomode}{\tagsleft@false}
\begin{document}
	\title{Fractionally colouring $P_5$-free graphs}
	\author{Tung H. Nguyen}
	\address{Mathematical Institute and Christ Church, University of Oxford, Oxford, UK}
	\email{\href{mailto:nguyent@maths.ox.ac.uk}{nguyent@maths.ox.ac.uk}
	}
	\thanks{Part of this work was conducted while the author was at Princeton University and was partially supported by AFOSR grant FA9550-22-1-0234, NSF grant DMS-2154169,
		and a Porter Ogden Jacobus Fellowship.}
	\begin{abstract}
		We obtain some $d\ge2$ such that every graph $G$ with no induced copy of the five-vertex path $P_5$ has at most $\alpha(G)\omega(G)^d$ vertices.
		This \dd off-diagonal Ramsey\ee{} statement implies that every such graph $G$ has fractional chromatic number at most $\omega(G)^d$, and is another~step towards the polynomial \gas{} conjecture for $P_5$.
		The proof uses the recent \erh{} result for $P_5$ and adapts a decomposition argument for $P_5$-free graphs developed by the author in an earlier paper.
		
	\end{abstract}
	\maketitle
	\section{Introduction}
	\subsection{Main result}
	All graphs in this paper are finite and with no loops or parallel edges.
	For a graph $G$ with vertex set $V(G)$ and edge set $E(G)$, let $\abs G:=\abs{V(G)}$.
	The {\em clique number} of $G$, denoted by $\omega(G)$, is the maximum size of a clique in $G$;
	and the {\em stability number} of $G$, denoted by $\alpha(G)$, is the maximum size of a stable set in $G$. The {\em chromatic number} of a graph $G$, denoted by $\chi(G)$, is the least $\ell\ge0$ such that there is a partition of $V(G)$ into $\ell$ stable sets in $G$;
	equivalently the chromatic number of $G$ is the least $\ell\ge0$ such that the vertices of $G$ can be coloured by $\ell$ colours such that no two adjacent vertices get the same colour.
	For $S\subset V(G)$, let $G[S]$ be the graph with vertex set $S$ and edge set $\{uv:u,v\in S,uv\in E(G)\}$.
	A graph $H$ is an {\em induced subgraph} of $G$ if there exists $S\subset V(G)$ with $H=G[S]$;
	and $G$ is {\em $H$-free} if it has no induced subgraph isomorphic to $H$.
	The \gas{} conjecture~\cite{MR382051,MR634555} (see~\cite{scott2022,MR4174126} for surveys) asserts that:
	\begin{conjecture}
		[\gas]
		\label{conj:gs}
		For every forest $T$, there is a function $f\colon \mab N\to\mab N$ such that $\chi(G)\le f(\omega(G))$ for all $T$-free graphs $G$.
	\end{conjecture}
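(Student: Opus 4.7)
The approach would be a two-level induction: first reduce from forests to trees, then induct on tree structure. For the reduction, write $T$ as a disjoint union of trees $T_1,\dots,T_k$; assuming each $T_i$ individually satisfies the conjecture, one tries to show that any $T$-free graph $G$ either already has bounded chromatic number or contains many pairwise vertex-disjoint induced copies of some $T_i$, so that a Ramsey-type colouring of the adjacencies between these copies reconstructs all of $T$ inside $G$, a contradiction. This packing step is a standard but delicate ingredient of the \gas{} framework.

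For trees, induct on the radius. The base case $T=K_{1,k}$ is classical: in a $K_{1,k}$-free graph $G$, every neighbourhood has independence number less than $k$, so by Ramsey's theorem $\abs{N(v)}<R(k,\omega(G)+1)$ for every $v$, and greedy colouring yields $\chi(G)\le R(k,\omega(G)+1)$. For the inductive step, write $T$ as a smaller tree $T'$ with one extra leaf attached at a vertex $u$; given a $T$-free graph $G$ with large chromatic number, apply the induction hypothesis inside appropriate high-chromatic subgraphs to locate many induced copies of $T'$, and then argue via Ramsey that the image of $u$ in some copy has enough pairwise-comparable external neighbours to support a pendant leaf, reconstructing $T$ and contradicting $T$-freeness.

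The main obstacle — and precisely the reason \cref{conj:gs} remains open — is that no current technique simultaneously handles the long paths in $T$ and the nontrivial branching. Paths alone are controlled by Gy\'arf\'as's DFS-layering; radius-two trees by the iterated Ramsey peeling of Kierstead--Penrice; and subdivided stars or certain caterpillars by ad hoc combinations of both. A unified structural decomposition of $T$-free graphs that tracks both regimes — let alone one quantitatively strong enough for the polynomial strengthening of \cref{conj:gs} — appears to require genuinely new ideas, and the present paper's analysis of $P_5$, as the simplest path for which the polynomial bound was still open, is intended as a step along this road.
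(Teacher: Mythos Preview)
The statement you are addressing is \cref{conj:gs}, the \gas{} conjecture. It is a \emph{conjecture}, not a theorem: the paper states it as an open problem and makes no attempt to prove it. There is therefore no ``paper's own proof'' to compare against, and your proposal should not be framed as a proof at all.

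To your credit, you recognise this yourself in the final paragraph, where you explain that the conjecture remains open because no known technique handles both long paths and nontrivial branching simultaneously. But that means what you have written is not a proof or even a proof plan; it is a survey of partial results (the forest-to-tree reduction, the $K_{1,k}$ base case via Ramsey, Gy\'arf\'as's path argument, Kierstead--Penrice for radius-two trees) followed by an admission that the inductive step in general cannot currently be completed. The ``packing step'' and the ``Ramsey peeling to attach a leaf'' that you sketch are exactly where every known approach breaks down for general trees, and you do not supply a new idea to bridge that gap. So the proposal is not wrong as commentary, but it is not a proof, and the statement does not admit one with current methods.
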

	While most confirmed cases of this conjecture yield (super-)exponential functions $f$, the conjecture could even be true with polynomial $f$; formally:
	\begin{conjecture}
		[Polynomial \gas]
		\label{conj:pgs}
		For every forest $T$, there exists $d\ge2$ such that $\chi(G)\le\omega(G)^d$ for all $T$-free graphs $G$.
	\end{conjecture}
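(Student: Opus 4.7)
The plan is to attempt an induction on $\abs{V(T)}$, and since \Cref{conj:pgs} is wide open in general I can only sketch a plausible inductive attack and flag where it collapses. The base $T = P_2$ is trivial (take $d = 2$), and the usual forest-to-tree reduction lets us assume $T$ is connected: if $T_1$ is a component of $T$ and $T' := T - V(T_1)$, then in a $T$-free graph of large chromatic number one iteratively carves out a $T_1$-free induced subgraph whose chromatic number is forced to remain large by the inductive polynomial bound for $T'$, eventually contradicting the polynomial bound for $T_1$. This reduction inflates the exponent but preserves polynomial form.

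For a tree $T$ on $\ge 3$ vertices, pick a leaf $v$ with neighbour $u$ and set $T' := T - v$. Given a $T$-free graph $G$, I would run BFS from some vertex $x$ to produce layers $L_0, L_1, L_2, \ldots$. Any copy of $T'$ inside $L_i$ whose image of $u$ has a neighbour in $L_{i-1}$ would extend to a copy of $T$ in $G$, so the hope is that each $G[L_i]$, after mild surgery, is essentially $T'$-free and thus subject to the inductive polynomial bound. A careful bookkeeping relating $\chi(G)$ to $\max_i \chi(G[L_i])$ and $\omega(G)$ would then close the induction.

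The hard part, and the reason \Cref{conj:pgs} is hard, is exactly this bookkeeping step. Naive BFS layering combines layers multiplicatively, yielding tower-type bounds rather than polynomial ones. To break the tower one seems to need, at minimum, off-diagonal Ramsey input of \erh{} type, combined with a structural decomposition tailored to $T$. For $T = P_5$ this paper supplies both ingredients (through the recent \erh{} theorem for $P_5$ and the author's earlier decomposition), and the main result is presumably obtained by running precisely such an induction. For general forests neither ingredient is available: the \erh{} conjecture itself is open for most trees on $\ge 6$ vertices, and no uniform decomposition framework is known. A convincing proof of \Cref{conj:pgs} would have to supply both simultaneously, or circumvent them entirely.
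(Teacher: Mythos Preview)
The statement you were asked to prove is \emph{not} a theorem of the paper; it is Conjecture~\ref{conj:pgs}, the polynomial \gas{} conjecture, which the paper explicitly flags as open even in the special case $T=P_5$. There is therefore no ``paper's own proof'' to compare against, and your proposal is not a proof either --- as you yourself acknowledge when you say the bookkeeping step ``is exactly'' why the conjecture is hard and that ``neither ingredient is available'' for general forests. A proof attempt that announces its own collapse is an honest discussion, but it is not a proof.

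There is also a substantive misreading of the paper embedded in your sketch. You write that for $T=P_5$ ``this paper supplies both ingredients \ldots\ and the main result is presumably obtained by running precisely such an induction.'' This is incorrect on two counts. First, the paper does \emph{not} establish Conjecture~\ref{conj:pgs} for $P_5$: it proves the strictly weaker \cref{thm:p5main} (equivalently \cref{thm:p5frac}), a bound on the Hall ratio and on the \emph{fractional} chromatic number, not on $\chi$ itself. The paper is explicit that the best known $\chi$-bound for $P_5$-free graphs is still super-polynomial. Second, the paper's argument is nothing like a BFS/leaf-deletion induction on $T$: it is a density-increment argument combined with a decomposition of $P_5$-free graphs along high-$\rho$ anticomplete pairs (\cref{thm:main}, proved via \cref{lem:incre,lem:anti,lem:1step,lem:incrho}), all of which is specific to $P_5$ and to the Hall ratio rather than to $\chi$. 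Your Gy\'arf\'as-style layering heuristic is a reasonable mental model for why \cref{conj:gs} might hold with \emph{some} bound, but it has no bearing on the polynomial strengthening and is not the mechanism used here.
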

	
	(The case when $T$ is a path was asked independently by Esperet~\cite{esperet} and by Trotignon and Pham~\cite{MR3789676}; see also~\cite{2024p5,MR4472775} for state-of-the-art partial results.)
	This conjecture is related to the following open problem of Erd\H os and Hajnal~\cite{MR599767,MR1031262} (see~\cite{MR1425208,MR3150572} for surveys and~\cite{density3,density4,density6,density7,MR4563865} for several recently confirmed special cases):
	\begin{conjecture}
		[\erh]
		\label{conj:eh}
		For every graph $H$, there exists $c>0$ such that every $H$-free graph $G$ satisfies $\max(\alpha(G),\omega(G))\ge\abs G^c$.
	\end{conjecture}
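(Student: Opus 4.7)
This is a famous open problem, so I can only outline what I see as the most promising route and the obstruction on which it founders.

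By the substitution reduction of Alon--Pach--Solymosi one may assume $H$ is prime, i.e., admits no nontrivial homogeneous set. R\"odl's theorem then lets one replace an arbitrary $H$-free graph $G$ by a linear-sized induced subgraph $G'$ of edge density at most $\eps$ (or at least $1-\eps$, after complementation), so the task reduces to turning the low density of $G'$ into a polynomial-sized stable set.

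My plan would be to extend the recursive decomposition philosophy behind the recent \erh{} result for $P_5$: pick a vertex $v$ whose neighbourhood interacts in a \dd structured\ee{} way with the rest of $G'$, partition the remainder of $G'$ according to its attachment to that neighbourhood, and iterate on a linear-sized piece. If a suitable potential can be made to drop at each step, such a scheme can in principle deliver a polynomial lower bound on $\max(\alpha(G),\omega(G))$ after $\Theta(\log\abs{G})$ iterations, improving on the super-polynomial Erd\H{o}s--Hajnal / Fox--Sudakov bounds currently available for general $H$.

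The main obstacle, and the reason the conjecture remains open for even modest $H$, is producing the \dd structured\ee{} vertex $v$ for arbitrary prime $H$: the existing arguments for specific $H$ (such as $P_5$, $C_5$, the bull, and a handful of others) exploit ad hoc features---bounded diameters, short induced cycles, local symmetries---with no universal analogue. I expect any uniform attack to require a new Ramsey-type structural theorem for sparse $H$-free graphs, perhaps via VC-dimension of induced subgraphs or a blow-up/pseudorandom dichotomy strong enough to avoid the regularity-based losses.
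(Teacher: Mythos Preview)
The statement is Conjecture~\ref{conj:eh}, the \erh{} conjecture, which the paper presents as an open problem and does not prove. You correctly recognise this, and your proposal is explicitly not a proof but a discussion of possible lines of attack and their obstructions. There is therefore nothing to compare against a proof in the paper---none exists---and your honest acknowledgement that the problem is open is the right response.

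That said, a small clarification on your sketch: the paper's own contribution is not a proof of \erh{} for $P_5$ (that was done earlier in~\cite{density7}) but rather the stronger off-diagonal statement $\alpha(G)\omega(G)^d\ge\abs G$ for $P_5$-free $G$. The decomposition philosophy you allude to is indeed central to the paper, but it is deployed to bound the Hall ratio $\rho$ rather than to prove \erh{} directly, and it \emph{uses} the already-established \erh{} property of $P_5$ (in its polynomial R\"odl form, \cref{thm:p5rodl}) as an input. So the recursive scheme here is downstream of \erh{} for $P_5$, not a template for proving \erh{} in general.
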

	To explain, observe that if Conjecture~\ref{conj:pgs} holds for a forest $T$ then so does Conjecture~\ref{conj:eh}; indeed, for every $T$-free graph $G$:
	\begin{equation*}
		\omega(G)^d\ge\chi(G)\ge \frac{\abs G}{\alpha(G)}
		\quad\Rightarrow\quad
		\alpha(G)\omega(G)^d\ge\abs G
		\quad\Rightarrow\quad
		\max(\alpha(G),\omega(G))\ge\abs G^{\frac1{d+1}}.
	\end{equation*}
	
	Currently the five-vertex path $P_5$ is the smallest open case of Conjecture~\ref{conj:pgs}. Even though it was recently shown in~\cite{density7} that $P_5$ does indeed satisfy Conjecture~\ref{conj:eh}, the proof method there appears insufficient to imply that it also satisfies the middle inequality above for some universal $d\ge2$ (we will elaborate more on this in \cref{sec:sketch}).
	The main result of this paper verifies such a substantial \dd off-diagonal Ramsey\ee{} improvement:
	\begin{theorem}
		\label{thm:p5main}
		There exists $d\ge2$ such that every $P_5$-free graph $G$ satisfies $\alpha(G)\omega(G)^d\ge\abs G$.
	\end{theorem}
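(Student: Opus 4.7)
The strategy is induction on $\omega(G)$, combining the \erh{} result for $P_5$~\cite{density7} with the decomposition machinery from the author's earlier paper~\cite{2024p5}. Write $n=\abs G$, $\omega=\omega(G)$, and $\alpha=\alpha(G)$, and let $c\in(0,1)$ be the \erh{} exponent for $P_5$, so that every $P_5$-free graph $H$ satisfies $\max(\alpha(H),\omega(H))\ge\abs H^c$. The goal is to pin down a constant $d\ge 2$ depending only on $c$ and to show, by induction on $\omega$, that $\alpha(H)\omega(H)^d\ge\abs H$ for every $P_5$-free graph $H$.

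The easy case is $\omega\ge n^c$: then $n\le\omega^{1/c}$, and any $d\ge 1/c$ already yields $\alpha\omega^d\ge\omega^{1/c}\ge n$ since $\alpha\ge 1$. So we may assume $\omega\le n^c$, in which case applying the \erh{} bound to $G$ itself gives $\alpha\ge n^c$. This still falls short of the target $\alpha\ge n/\omega^d$ once $\omega$ is much smaller than $n^c$, and closing the remaining gap is the core of the argument.

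Here I would invoke the decomposition from~\cite{2024p5}: for a $P_5$-free graph $G$ it should yield, after possibly peeling off a stable set that is already ``large enough'' by \erh{}, a partition of most of the remaining vertices into a controlled number of $P_5$-free induced subgraphs each with clique number strictly smaller than $\omega$. The inductive hypothesis applied to each of these pieces produces pairwise disjoint stable sets; combining them with the initial peeled-off stable set delivers the desired lower bound on $\alpha(G)$, provided $d$ is large enough to absorb the overhead of the decomposition.

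The main obstacle is keeping $d$ \emph{uniform} in $\omega$. A naive induction in which the exponent gets multiplied by a constant factor per recursive level would deliver only the quasi-polynomial bound $\omega^{O(\log\omega)}$ of~\cite{2024p5}. Breaking this barrier should require inserting the \erh{} bound \emph{inside} each recursive call, so that every step shrinks the ambient graph by a polynomial factor of the form $n^c$ rather than just by one unit of clique number. Balancing the polynomial overhead of the decomposition against this polynomial gain from \erh{} is what should pin down $d$ as a function of $c$ alone, and executing that balance cleanly is where the real work of the proof lies.
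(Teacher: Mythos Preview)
Your proposal has a genuine structural gap. The step ``partition most of $V(G)$ into pieces each with clique number strictly less than $\omega$, apply induction to each piece, then combine the resulting stable sets'' does not work as stated: the union of stable sets from different pieces is a stable set in $G$ only if the pieces are pairwise anticomplete, and an anticomplete partition gives no reason for the clique number to drop in any piece. Conversely, a partition in which every piece has smaller clique number gives no control on adjacencies between pieces. You need both at once, and neither \cite{density7} nor \cite{2024p5} hands you that directly. Your final paragraph correctly senses that a naive recursion only recovers the quasi-polynomial bound, but ``insert the \erh{} bound inside each recursive call so that the ambient graph shrinks by a factor $n^c$'' is not a mechanism that makes sense here: shrinking $\abs G$ does nothing for the target inequality $\alpha\omega^d\ge\abs G$ unless $\omega$ drops in tandem.

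The paper's actual mechanism is essentially the opposite of what you describe. It rephrases the goal in terms of the Hall ratio $\rho(G)=\max_{F}\abs F/\alpha(F)$ and proves (\cref{thm:main}) that every $P_5$-free $G$ with $\omega(G)\ge 2$ contains either a \emph{complete} pair $(X,Y)$ with $\rho(X)\ge y^9\rho(G)$ and $\rho(Y)\ge(1-3y)\rho(G)$ for some $y\in(0,\tfrac14]$, or a \emph{complete} blockade $(B_1,\dots,B_k)$ with $k\ge 2$ and each $\rho(B_i)\ge k^{-d}\rho(G)$. Completeness is what drives the induction: it forces $\omega(X)+\omega(Y)\le\omega(G)$ (respectively $\min_i\omega(B_i)\le\omega(G)/k$), so some part has clique number at most $y\omega$, $(1-y)\omega$, or $\omega/k$, and a single application of the inductive hypothesis to that part closes the bound with one fixed exponent. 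The \erh{} result for $P_5$ is used not as a black box on $G$ but in its equivalent ``polynomial R\"odl'' form, inside a density-increment argument that manufactures the complete blockade in the dense case; the decomposition ideas of \cite{2024p5} are adapted to $\rho$ and iterated to push anticomplete pairs up to $\rho$ arbitrarily close to $\rho(G)$, which eventually forces a small cutset and hence the complete pair. None of this is visible in your outline.
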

	
	Though we make no attempt to optimise $d$, our method seems unlikely to result in some $d\le10$.
	Still, \cref{thm:p5main} could be true with $d=2$, which would be optimal as one can take $G$ to be the complement of an $n$-vertex triangle-free graph with stability number at most $O(\sqrt{n\log n})$~\cite{MR1369063}.
	\subsection{Background and discussion}
	A notable feature of Conjecture~\ref{conj:gs} is that it no longer holds if one allows the forbidden graph $T$ to contain a cycle,
	because a classical probabilistic argument of Erd\H os~\cite{MR102081} yields high-chromatic graphs with no short cycle (in particular triangle-free).
	In fact, Erd\H os' construction gives much more (see also~\cite{MR3524748}):
	\begin{theorem}
		[Erd\H os]
		\label{thm:erdos}
		For every $g\ge 3$ and every sufficiently large $n$, there are $n$-vertex graphs with no cycle of length at most $g$ and with stability number at most $n^{\frac{g-1}{g}+o(1)}$.
	\end{theorem}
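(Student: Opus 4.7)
The plan is to apply the classical alteration method of Erd\H{o}s to the binomial random graph $G(N,p)$ with a carefully tuned edge probability $p$, then delete one vertex from every short cycle. I would take $N=2n$ and choose
\[
p=cN^{-1+1/g}
\]
for a suitably small absolute constant $c=c(g)>0$, so that short cycles are rare while $p$ is still large enough to kill large stable sets.

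First I would bound the number of short cycles. For each $3\le i\le g$, the expected number of $i$-cycles in $G(N,p)$ is $\frac{1}{2i}\cdot N(N-1)\cdots(N-i+1)p^i\le \frac{1}{2i}N^ip^i=\frac{c^i}{2i}N^{i/g}\le\frac{c^g}{2g}N$. Summing over $3\le i\le g$ and choosing $c$ small enough, the expected total number $X$ of cycles of length at most $g$ is at most $N/4$. By Markov's inequality, $X\le N/2$ with probability at least $1/2$.

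Next I would control the stability number. The expected number of independent sets of size $k$ in $G(N,p)$ is at most $\binom{N}{k}(1-p)^{\binom{k}{2}}\le\left(Ne^{-p(k-1)/2}\right)^k$. Setting $k=\lceil (3/p)\ln N\rceil$ makes this tend to $0$, so with probability $1-o(1)$ we have $\alpha(G(N,p))<k=N^{1-1/g+o(1)}$.

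Combining the two estimates by a union bound, for all sufficiently large $n$ there exists a realisation $G$ of $G(N,p)$ on $N=2n$ vertices with at most $N/2$ cycles of length $\le g$ and with $\alpha(G)\le N^{(g-1)/g+o(1)}$. Deleting one vertex from each such short cycle yields a graph $G'$ on at least $N/2=n$ vertices with no cycle of length $\le g$ and with $\alpha(G')\le\alpha(G)\le n^{(g-1)/g+o(1)}$; restricting to any $n$ vertices gives the desired graph. The only subtle point is tuning $p$ so that the short-cycle expectation is $o(N)$ while the stable-set bound still comes out as $n^{(g-1)/g+o(1)}$; balancing these forces the exponent $-1+1/g$ in $p$, which is exactly what produces the claimed exponent $(g-1)/g$.
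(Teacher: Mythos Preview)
The paper does not actually prove \cref{thm:erdos}; it is stated as a classical result with citations to Erd\H{o}s' original paper and to Alon--Spencer, so there is no ``paper's own proof'' to compare against. Your argument is the standard alteration/deletion method and is correct: the choice $p=cN^{-1+1/g}$ balances the expected number of short cycles against the stable-set bound exactly as needed, and the final restriction to $n$ vertices is harmless since it can only decrease $\alpha$ and cannot create cycles. One cosmetic point: the inequality $\frac{c^i}{2i}N^{i/g}\le\frac{c^g}{2g}N$ you wrote for $i<g$ is not uniform in $N$ (for small $c$ and small $N$ it can fail), but since the theorem only concerns sufficiently large $n$ this is fine, and in any case the dominant term in the sum is $i=g$ while the others are $o(N)$.
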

	(Spencer~\cite{MR491337} improved this slightly to $n^{\frac{g-2}{g-1}+o(1)}$.)
	There are explicit constructions of this type, for example the Ramanujan graphs by Lubotzky, Phillips, and Sarnak~\cite{MR963118} (with exponent $\frac{3g-1}{3g}$).
	
	For every two integers $k,w\ge1$ and every graph $H$, let $R_H(k,w)$ be the least integer $n\ge1$ such that every $n$-vertex $H$-free graph has a stable set of size at least $k$ or a clique of size at least $w$.
	Then Conjecture~\ref{conj:gs} would imply that every forest $T$ satisfies $R_T(k,w)\le Ck$ for some $C=C(T,w)>0$ (see~\cite{stable1} for more discussion on this open problem) and Conjecture~\ref{conj:eh} says that every graph $H$ satisfies $R_H(k,k)\le k^d$ for some $d=d(H)\ge1$.
	In view of \cref{thm:erdos}, in order to prove an \dd if and only if\ee{} statement for excluding an induced forest $T$ in the spirit of Conjecture~\ref{conj:gs},
	it suffices to prove a weakening of the conjecture that $R_T(k,w)\le k^{1+o(1)}$ for all forests $T$, where the $o(1)$ term contains the constant factors depending on $T$ and $w$.
	This relaxed statement was recently confirmed via the following result~\cite{stable1} (in this paper $\log$ denotes the binary logarithm):
	\begin{theorem}
		[Nguyen--Scott--Seymour]
		\label{thm:trees}
		For every $\eps>0$ and every forest $T$, there exists $\delta>0$ such that for every $T$-free graph $G$, either $\omega(G)\ge \delta\log\log\abs G$ or $\alpha(G)\ge\abs G^{1-\eps}$.
	\end{theorem}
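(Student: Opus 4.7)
The plan is to proceed by induction on $|V(T)|$. The base cases $|V(T)|\le 1$ are vacuous; when $T$ is a single edge, $T$-freeness forces $G$ to be edgeless and $\alpha(G)=|G|$; when $T$ is a star $K_{1,t}$, $T$-freeness bounds the maximum degree of $G$ by $t-2$ and a greedy argument yields $\alpha(G)\ge|G|/(t-1)$, which is $\ge|G|^{1-\eps}$ for $|G|$ large enough. For the inductive step, fix $\eps>0$ and a forest $T$ with $|V(T)|\ge 2$, and assume the conclusion for every forest on fewer vertices with constant $\delta_0=\delta_0(\eps/2)$. Pick a leaf $v$ of some tree component of $T$ with neighbour $u$, and set $T^-:=T-v$.

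Dispose first of the easy sub-case: if $G$ is itself $T^-$-free, then the induction hypothesis applied to $T^-$ with parameter $\eps$ delivers the conclusion after shrinking $\delta$. Otherwise, $G$ contains an induced copy $\tau$ of $T^-$, and the $T$-freeness of $G$ forces, for every $w\in V(G)\setminus V(\tau)$, that the attachment pattern $N(w)\cap V(\tau)=\{u_\tau\}$ is forbidden. I would use this by carrying out a neighbourhood-extraction procedure: iteratively replace $G$ with a large induced subgraph of the form $G[N(x)]$ or $G[V(G)\setminus N[x]]$ for carefully chosen vertices~$x$, guided by a fixed embedding of $T^-$, so that after at most $|V(T^-)|$ rounds the remaining vertex set is $T^-$-free (any surviving copy of $T^-$ would combine with the selected vertices and the leaf $v$ to produce an induced $T$ in the original $G$, a contradiction). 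The output is an induced subgraph $H\subseteq G$ which is $T^-$-free, with $|H|\ge|G|^{1-\eps/2}$, and then applying the induction hypothesis for $T^-$ to $H$ with parameter $\eps/2$ gives $\alpha(G)\ge\alpha(H)\ge|H|^{1-\eps/2}\ge|G|^{1-\eps}$, provided $\delta\le\delta_0/2$ so that $\omega(H)\le\omega(G)<\delta\log\log|G|\le\delta_0\log\log|H|$ for $|G|$ large.

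The quantitative heart of the argument is controlling the loss at each extraction step. At a given step one sometimes needs to pass to the smaller side of the split $N(x)\sqcup (V(G)\setminus N[x])$; a direct choice could lose a constant factor, but performing the $|V(T^-)|$ iterations naively would still cost only a constant-factor blow-up. The delicate point is that at some steps one is forced to restrict to the common neighbourhood of several already-chosen vertices, and here the bound $\omega(G)\le\delta\log\log|G|$ enters through an \erh{}-style averaging: in a graph with clique number $t$ there exists a vertex whose neighbourhood and non-neighbourhood are both of size at least $|G|^{1-O(1/t)}$, so iterating across $|V(T^-)|$ rounds costs at most $|G|^{O(|V(T^-)|/t)}$. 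With $t\le\delta\log\log|G|$ this is $|G|^{o(1)}$, and in particular below $|G|^{\eps/2}$ when $\delta$ is small enough in terms of $|V(T)|$ and $\eps$.

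The main obstacle I expect is precisely this calibration: ensuring that a single iterated-exponential Ramsey-type loss per extraction step compounds, across at most $|V(T)|$ inductive peels, into a total loss of at most $|G|^{\eps}$. The double-logarithmic clique bound is exactly the threshold at which this balance is affordable and cannot be relaxed to $\omega(G)=O(\log|G|)$ without breaking it. Careful choice of $\delta$ decaying appropriately with $|V(T)|$, together with the separate handling of disconnected $T$ (when $T=T_1\sqcup T_2$, one fixes an induced copy of the smaller component $T_1$—present unless $G$ is $T_1$-free, in which case induction applies directly—and passes to its common non-neighbourhood, which is $T_2$-free, using an analogous averaging), is the technical crux.
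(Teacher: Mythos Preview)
First, note that this paper does not prove \cref{thm:trees}; it is quoted from~\cite{stable1} as background, so there is no in-paper proof to compare against.

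Your proposal has a genuine gap at its quantitative core. The assertion that ``in a graph with clique number $t$ there exists a vertex whose neighbourhood and non-neighbourhood are both of size at least $|G|^{1-O(1/t)}$'' is false: if $G$ is a disjoint union of copies of $K_t$ then $\omega(G)=t$ yet every neighbourhood has size exactly $t-1$. More seriously, your use of the hypothesis $\omega(G)<\delta\log\log|G|$ is inverted. You estimate the cumulative loss as $|G|^{O(|V(T^-)|/t)}$ and conclude that ``with $t\le\delta\log\log|G|$ this is $|G|^{o(1)}$''; but $t\le\delta\log\log|G|$ is only an \emph{upper} bound on $t=\omega(G)$, and nothing rules out $t$ being a fixed constant (say $t=2$ if $G$ is triangle-free), in which case the exponent $O(|V(T^-)|/t)$ is bounded away from zero and the loss is not $|G|^{o(1)}$. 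Your arithmetic would require a \emph{lower} bound on $t$ of order $\log\log|G|$, which the hypothesis does not provide and which need not hold. The leaf-peeling extraction you outline is close in spirit to classical \gas{} arguments, but those control $\chi$ rather than produce near-linear stable sets, and your sketch contains no mechanism by which a small-clique hypothesis yields $|H|\ge|G|^{1-\eps/2}$; indeed, even the special case $\omega(G)\le2$ of \cref{thm:trees} is already nontrivial and is not covered by your argument.
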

	Stating that $R_{P_5}(k,w)\le kw^d$ for some universal $d\ge2$, \cref{thm:p5main} is thus a considerable strengthening of \cref{thm:trees} in the special case $T=P_5$.
	
	\cref{thm:p5main} can also be reformulated in terms of bounding the fractional chromatic number and the Hall ratio of $P_5$-free graphs by polynomial functions of their clique numbers.
	Recall that the {\em fractional chromatic number} of a graph $G$, denoted by $\chis(G)$, is the minimal $q\ge0$ such that there are nonnegative numbers $\{x_S:S\in\mac I(G)\}$ satisfying $\sum_{S\in \mac I(G)}x_S=q$ and $\sum_{v\ni S}x_S\ge1$ for all $v\in V(G)$; here $\mac I(G)$ denotes the collection of stable sets of $G$.
	Let $\psi(G)$ be $\abs G/\alpha(G)$ if $\abs G\ge1$ and be $0$ if $\abs G=0$;
	and the {\em Hall ratio} of $G$, denoted by $\rho(G)$, is defined as $\max_F\psi(F)$ where the maximum ranges through all induced subgraphs $F$ of $G$.
	Then $\omega(G)\le\rho(G)\le\chis(G)\le\chi(G)$.
	It is not hard to see that \cref{thm:p5main} is equivalent to $\rho(G)\le\omega(G)^d$ for all $P_5$-free graphs $G$;
	and with a little more work we can show that it is also equivalent to the following \dd polynomial $\chis$-boundedness\ee{} result for such graphs $G$:
	\begin{theorem}
		\label{thm:p5frac}
		There exists $d\ge2$ such that $\chis(G)\le \omega(G)^d$ for all $P_5$-free graphs $G$.
	\end{theorem}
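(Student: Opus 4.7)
The plan is to derive \cref{thm:p5frac} from \cref{thm:p5main} via a standard LP-duality reformulation of the fractional chromatic number, together with one closure property of the class of $P_5$-free graphs under blowups.

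First, I would invoke the blowup formula for $\chis$: for any graph $G$ and any integer weight vector $w\colon V(G)\to\mab Z_{\ge0}$, let $G[w]$ denote the \emph{stable set blowup} of $G$, in which each $v\in V(G)$ is replaced by a stable set $S_v$ with $\abs{S_v}=w(v)$ and the complete bipartite join $S_u\cup S_v$ is placed for every $uv\in E(G)$. A max-weight stable set in $(G,w)$ lifts to an independent set of equal size in $G[w]$ and vice versa, and a rational weighting witnessing $\chis(G)$ can be cleared of denominators; this yields the well-known identity
\[
	\chis(G)=\sup_{w}\frac{\abs{G[w]}}{\alpha(G[w])}.
\]

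Second, I would prove the main auxiliary claim: stable set blowups of $P_5$-free graphs remain $P_5$-free. Suppose for contradiction that $u_1$--$u_2$--$u_3$--$u_4$--$u_5$ is an induced $P_5$ in $G[w]$ with $u_i\in S_{v_i}$. The path edges force $v_iv_{i+1}\in E(G)$ and in particular $v_i\ne v_{i+1}$, while each non-edge $u_iu_j$ with $\abs{i-j}\ge2$ forces either $v_i=v_j$ or $v_iv_j\nin E(G)$. A finite case check rules out every possible coincidence $v_i=v_j$ with $\abs{i-j}\ge2$: e.g., $v_1=v_3$ combined with $v_3v_4\in E(G)$ gives $v_1v_4\in E(G)$ with $v_1\ne v_4$, hence $u_1u_4\in E(G[w])$, contradicting the $P_5$; analogous one-line checks rule out the five remaining pairs $\{1,4\},\{1,5\},\{2,4\},\{2,5\},\{3,5\}$. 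Once all five $v_i$ are distinct, $v_1v_2v_3v_4v_5$ itself induces a $P_5$ in $G$, contrary to hypothesis.

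Finally, since every clique of $G[w]$ meets each $S_v$ in at most one vertex and conversely any clique of $G$ lifts to a clique of $G[w]$, one has $\omega(G[w])=\omega(G)$ whenever $G[w]$ is nonempty. Applying \cref{thm:p5main} to the $P_5$-free graph $G[w]$ then gives $\abs{G[w]}\le\alpha(G[w])\,\omega(G)^d$, so $\abs{G[w]}/\alpha(G[w])\le\omega(G)^d$; taking the supremum over $w$ yields $\chis(G)\le\omega(G)^d$, which is \cref{thm:p5frac} with the same exponent $d$ as in \cref{thm:p5main}. The only substantive step is the blowup-closure claim; the rest is formal manipulation of the LP definition of $\chis$.
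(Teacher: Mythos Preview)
Your proposal is correct and follows essentially the same approach as the paper: both invoke the LP-duality/blowup characterization of $\chis$, argue that stable-set blowups of $P_5$-free graphs remain $P_5$-free, observe $\omega(G[w])\le\omega(G)$, and then apply \cref{thm:p5main}. The only cosmetic difference is that the paper dispatches the blowup-closure step in one line (no two vertices of $P_5$ are non-adjacent with identical neighbourhoods, so an induced $P_5$ in the blowup must hit distinct blobs), whereas you do the equivalent six-pair case check; also, your equality $\omega(G[w])=\omega(G)$ can fail when some $w(v)=0$, but only the inequality $\le$ is needed, so this does not affect the argument.
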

	\begin{proof}
		[Proof of the equivalence of \cref{thm:p5main,thm:p5frac}]
		It suffices to show that \cref{thm:p5main} implies \cref{thm:p5frac}.
		To see this, let $d\ge2$ be given by \cref{thm:p5main}, and let $G$ be a non-null $P_5$-free graph.
		By linear programming duality (see~\cite[Theorem 7.3.2]{MR1829620} for instance), there exists $f\colon V(G)\to\mab N_0$ and a stable set $S$ of $G$ such that $f(V(G))>0$ and $f(V(G))\ge\chis(G)\cdot f(I)$ for all nonempty stable sets $I$ of $G$ (here $f(A)=\sum_{v\in A}f(v)$ for all $A\subset V(G)$) where equality holds for $I=S$.
		In particular $f(S)\ge f(I)$ for all stable sets $I$ of $G$.
		Let $J$ be the graph obtained from $G$ by blowing up each $v\in V(G)$ by a stable set of size $f(v)$.
		Then $\abs J=f(V(G))$, $\alpha(J)=f(S)$, $\omega(J)\le\omega(G)$, and $J$ is $P_5$-free since $P_5$ has no two vertices with a common neighbour.
		Then \cref{thm:p5main} implies $\psi(J)\le\omega(G)^d$;
		and so $\chis(G)=f(V(G))/f(S)=\psi(J)\le\omega(J)^d\le\omega(G)^d$ by the choice of $f$.
		This proves \cref{thm:p5frac}.
	\end{proof}
	
	In view of the above discussion, let us say that a graph $T$ is:
	\begin{itemize}
		\item {\em poly-$\chi$-bounding} if there exists $d=d(T)\ge1$ such that $\chi(G)\le\omega(G)^d$ for all $T$-free graphs $G$;
		
		\item {\em poly-$\chis$-bounding} if there exists $d=d(T)\ge1$ such that $\chis(G)\le\omega(G)^d$ for all $T$-free graphs $G$;
		
		\item {\em poly-$\rho$-bounding} if there exists $d=d(T)\ge1$ such that $\rho(G)\le\omega(G)^d$ for all $T$-free graphs $G$;
		
		\item {\em off-diagonal \erh{}} if for every $\eps>0$, there exists $\delta=\delta(T,\eps)>0$ such that every $T$-free graph $G$ satisfies either $\omega(G)\ge\abs G^\delta$ or $\alpha(G)\ge\abs G^{1-\eps}$; and
		
		\item {\em \erh{}} if there exists $c=c(T)>0$ such that $\max(\alpha(G),\omega(G))\ge\abs G^c$ for all $T$-free graphs~$G$.
	\end{itemize}
	
	Hence, $T$ is poly-$\chi$-bounding if and only if it satisfies Conjecture~\ref{conj:pgs}, is poly-$\rho$-bounding if and only if there exists $d=d(T)\ge1$ satisfying $R_T(k,w)\le kw^d$ for all $k,w\ge1$, and  is off-diagonal \erh{} if and only if for every $\eps>0$, there exists $d=d(T,\eps)\ge1$ such that $R_T(k,w)\le k^{1+\eps}w^d$ for all $k,w\ge1$.
	Therefore, the above five properties are listed in the order of decreasing strength.
	Also, one can repeat the above proof of the equivalence of \cref{thm:main,thm:p5frac} to show that if $T$ has no two vertices with a common neighbour then $T$ is poly-$\chis$-bounding if and only if it is poly-$\rho$-bounding.
	In addition, by \cref{thm:erdos}, if $T$ is off-diagonal \erh{} then it is a forest; and given a forest $T$ with the \erh{} property, showing that it is off-diagonal \erh{} would probably be a good start towards proving its poly-$\chi$-bounding property, and would also significantly improve \cref{thm:trees} for $T$.
	
	To show that $T$ is poly-$\chi$-bounding, another possible approach is to prove that $\chi(G)\le \chis(G)^d$ (or even $\chi(G)\le\rho(G)^d$) for all $T$-free graphs $G$ for some universal $d\ge1$.
	This would reduce verifying the poly-$\chi$-bounding property of $T$ to proving that it is poly-$\chis$-bounding (or poly-$\rho$-bounding, respectively);
	and in particular achieving this for $T=P_5$ would eventually show that Conjecture~\ref{conj:pgs} holds for $P_5$ via \cref{thm:p5frac}.
	Also, while it is unknown whether poly-$\chi$-bounding forests are closed under disjoint union,
	in \cref{sec:union} we will show that this holds for those satisfying the second, third, and fourth properties above
	(for the \erh{} property this is already known by a theorem of Alon, Pach, and Solymosi~\cite{MR1832443}).
	
	Let us make a couple of remarks on the five-vertex path $P_5$ in the context of the above five properties.
	First, as mentioned, it is currently open whether $P_5$ is poly-$\chi$-bounding; the best known bound in this direction
	$\chi(G)\le\omega(G)^{d\log\omega(G)/\log\log\omega(G)}$ was recently obtained in~\cite{2024p5}.
	Second, because we will employ the recently verified \erh{} property of $P_5$~\cite{density7} to deduce its poly-$\chis$-bounding property (\cref{thm:p5frac}),
	perhaps the poly-$\chi$-bounding property of $P_5$ (if this is true) could be proved in a similar manner.
	Another evidence for this is that in~\cite{2024p5}, the \erh{} property of $P_5$ was used to show that every $P_5$-free graph contains a \dd polynomial versus linear\ee{} high-$\chi$ complete pair of vertex subsets, which leads to the aforementioned best known $\chi$-bounding function.

	\section{Notation and some proof ideas}
	\label{sec:sketch}
	For a graph $G$, let $\overline G$ denote the complement graph of $G$.
	When there is no danger of ambiguity, for $S\subset V(G)$ we write $\alpha(S)$, $\omega(S)$, $\psi(S)$, $\rho(S)$ for $\alpha(G[S])$, $\omega(G[S])$, $\psi(G[S])$, $\rho(G[S])$ respectively.
	For disjoint $A,B\subset V(G)$, the pair $(A,B)$ is {\em complete} in $G$ if every vertex of $A$ is adjacent in $G$ to every vertex of $B$, and is {\em anticomplete} if $G$ if it is complete in $\overline G$.
	We also say that $A$ is {\em complete} (or {\em anticomplete}) to $B$ in $G$ if the pair $(A,B)$ is complete (or anticomplete, respectively) in $G$.
	A {\em blockade} in $G$ is a sequence $(B_1,\ldots,B_k)$ of disjoint subsets of $V(G)$;
	it is {\em complete} in $G$ if $(B_i,B_j)$ is complete in $G$ for all distinct $i,j\in[k]$, and {\em anticomplete} in $G$ if $(B_i,B_j)$ is anticomplete in $G$ for all distinct $i,j\in[k]$.
	The method of proof of \cref{thm:p5main} is via the following statement concerning high-$\rho$ complete pairs or complete blockades in $P_5$-free graphs:
	\begin{theorem}
		\label{thm:main}
		There exists $d\ge9$ such that every $P_5$-free graph $G$ with $\omega(G)\ge2$ contains either:
		\begin{itemize}
			\item a complete pair $(X,Y)$ with $\rho(X)\ge y^9\rho(G)$ and $\rho(Y)\ge (1-3y)\rho(G)$ for some $y\in(0,\frac14]$; or
			
			\item a complete blockade $(B_1,\ldots,B_k)$ for some $k\ge2$ such that $\rho(B_i)\ge k^{-d}\rho(G)$ for all $i\in[k]$.
		\end{itemize}
	\end{theorem}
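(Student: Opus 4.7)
Set $\rho := \rho(G)$ and fix an induced subgraph $F \subseteq V(G)$ achieving $\psi(F) = \rho$. My plan is to perform a structural decomposition of $F$ in the spirit of the one developed in~\cite{2024p5}, driving it by the \erh{} property of $P_5$ recently established in~\cite{density7}. The underlying principle is that $P_5$-freeness forces substantial ``complete'' structure: fixing a central vertex $v$, the levels $N_1(v), N_2(v), N_3(v)$ of a connected component of $F$ (which has diameter at most three, since any longer induced path would contain $P_5$) interact in a highly constrained way, and any sufficiently large interaction must already manifest itself as a complete bipartite pattern between subsets.

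\textbf{Finding a complete pair.} Suppose we can locate a vertex $v \in F$ and a connected subset $C \subseteq F \setminus N_F[v]$ with $\psi(C) \ge (1-3y)\rho$ for some $y \in (0, 1/4]$. I would show, using the $P_5$-free structure, that within $N_F(v)$ a substantial fraction of vertices are complete to $C$; call this set $W$. I would then apply the \erh{} property to $W$, possibly iterated a bounded number of times, to extract $X \subseteq W$ with $\rho(X) \ge y^9 \rho$. The complete pair $(X, C)$ then witnesses the first alternative. The factor $y^9$ records the combined loss sustained by the iterated \erh{} extractions (each converting Hall ratio into a subset of polynomial size with controlled $\rho$), and the slack provided by $y \le 1/4$ is what allows one to optimize between the two sides of the trade-off.

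\textbf{Building the complete blockade.} In the complementary case -- when no vertex $v$ admits a deep witness concentrating Hall ratio -- the value of $\rho$ must instead be distributed across many \emph{disjoint} high-$\rho$ parts, any two of which are forced to be complete by the $P_5$-free level structure. I would iteratively extract complete pairs using the procedure above: each small side contributes a block $B_i$ whose $\rho$ stays above the target $k^{-d}\rho$, while the large side carries the argument forward to the next round. After $k$ such rounds the remainder's Hall ratio falls below the first-case threshold; the assembled sequence $B_1, \ldots, B_k$ is then a complete blockade, with the exponent $d \ge 9$ accounting for the cumulative density loss across rounds and the polynomial blow-up that each application of \erh{} for $P_5$ introduces.

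\textbf{Main obstacle.} The hardest point is producing the complete partner $X$ in the first step: the \erh{} theorem supplies a large clique or a large stable set, but neither is automatically a high-$\rho$ set complete to a prescribed target. Transforming the raw \erh{} output into a subset of large \emph{Hall ratio} that also satisfies the completeness constraint imposed by $C$ -- and doing so while simultaneously respecting the $y^9$ versus $(1-3y)$ trade-off -- is where the adaptation of the decomposition from~\cite{2024p5} has to do the real work, and where one ultimately pays the exponent nine in the exponent of the density loss.
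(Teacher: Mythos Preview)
Your outline has a genuine gap, and it is precisely the one you flag as the ``main obstacle'': you never explain how to turn the raw \erh{} output for $P_5$ into a subset $X$ with $\rho(X)\ge y^9\rho(G)$ that is \emph{complete} to a prescribed target $C$. The \erh{} theorem hands you a clique or a stable set of polynomial size inside some set $W$; a clique has $\rho$ equal to its size, but there is no reason the clique sits inside the subset of $W$ complete to $C$, and a stable set has $\rho=1$. Your BFS picture (levels $N_1,N_2,N_3$ around a vertex $v$) does not by itself force enough of $N_F(v)$ to be complete to $C$ with controlled Hall ratio, and ``iterating \erh{} a bounded number of times'' does not convert size into Hall ratio while preserving completeness. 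Without this step both bullets of your plan collapse: the first bullet needs $X$, and the second bullet is built by iterating the first.

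The paper's route is quite different and sidesteps this obstacle entirely. It never tries to extract a high-$\rho$ complete partner from \erh{} directly. Instead it (i) uses the polynomial R\"odl form of the \erh{} property, together with a density-increment argument on dense pieces, to show that if the complete-pair and complete-blockade outcomes fail then every induced subgraph of $G$ with $\rho\ge(1-\eps^2)\rho(G)$ contains an anticomplete pair each side of which has $\rho\ge(1-O(\eps))\rho(G)$; (ii) feeds this ``locally sparse'' hypothesis into a cutset-based decomposition (where the comb lemma bounds $\rho$ of the cutset) to improve the anticomplete pair further; and (iii) iterates with a doubly-exponential schedule $\eps\mapsto\eps^2$ until the anticomplete pair $(A,B)$ satisfies $\rho(A),\rho(B)\ge(1-2\rho(G)^{-2})\rho(G)$. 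At that point a single vertex $v$ in the minimal cutset separating $A$ from $B$ is complete to one side, so $(\{v\},N_G(v))$ already witnesses the first outcome with $y=\rho(G)^{-2}$. In other words, the complete partner $X$ is ultimately a \emph{singleton}, and the work goes into making the anticomplete side so large that $1\ge y^9\rho(G)$ is automatic.
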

	\begin{proof}
		[Proof of \cref{thm:p5main}, assuming \cref{thm:main}]
		We claim that $d\ge9$ given by \cref{thm:main} suffices.
		To see this, we proceed by induction on $w:=\omega(G)$; we may assume $w\ge2$.
		By the choice of $d$, $G$ contains either:
		\begin{itemize}
			\item a complete pair $(X,Y)$ with $\rho(X)\ge y^9\rho(G)$ and $\rho(Y)\ge(1-3y)\rho(G)$ for some $y\in(0,\frac14]$; or
			
			\item a complete blockade $(B_1,\ldots,B_k)$ for some $k\ge2$ such that $\rho(B_i)\ge k^{-d}\rho(G)$ for all $i\in[k]$.
		\end{itemize}
		
		If the first bullet holds then either $\omega(X)\le yw$ or $\omega(Y)\le(1-y)w$. If the former case holds then $1\le \omega(X)\le yw$ (since $\rho(X)>0$) and by induction $\rho(G)\le y^9\rho(X)\le y^9\omega(X)^d\le y^{9-d}w^d\le w^d$ by the choice of $d$;
		and if the latter case holds then $1\le\omega(Y)\le(1-y)w<w$ and so by induction
		\[(1-3y)\rho(G)\le \rho(Y)\le \omega(Y)^d\le (1-y)^dw^d\le (1-y)^9w^d\le (1-3y)w^d\]
		where the last inequality holds since (note that $y\in(0,\frac14]$)
		\[1-3y-(1-y)^9=y\sum_{j=0}^8(1-y)^i-3y
		\ge y\sum_{j=0}^8(3/4)^j-3y
		=y\cdot4(1-(3/4)^9)-3y\ge0.\]
		
		If the second bullet holds then $2\le k\le w$ and there exists $i\in[k]$ with $\omega(B_i)\le w/k$; and so by induction we have
		$\rho(G)\le k^d\rho(B_i)\le k^d(w/k)^d= w^d$.
		This proves \cref{thm:main}.
	\end{proof}
	
	Much of the rest of this paper is thus devoted to proving \cref{thm:main}.
	At a high level, the proof can be viewed as a combination of the density increment method in~\cite[Section 7]{density7} (under the name ``iterative sparsification'' but the argument there was for graphs with no induced  $\overline{P_5}$) and an adaptation to the $\rho$ setting of a decomposition argument for connected $P_5$-free graphs based on high-$\chi$ anticomplete pairs developed in~\cite[Section 4]{2024p5} (under the name ``terminal partitions'').
	The main advantage of working with $\rho$ compared to $\chi$ is that the former is much more amenable to counting vertices, which allows us to employ tools from~\cite{density7} and obtain a simplified variant of the method from~\cite{2024p5} for the Hall ratio.
	
	In what follows, for $\eps>0$, a graph $G$ is $\eps$-sparse if it has maximum degree at most $\eps\abs G$, is $(1-\eps)$-dense if the complement $\overline G$ of $G$ is $\eps$-sparse, and is $\eps$-restricted if it is $\eps$-sparse or $(1-\eps)$-dense.
	Since we are working with $\rho$, we may assume that $G$ is connected.
	Similar to a number of proofs of \erh{} type results, we begin with R\"odl's theorem~\cite{MR837962}:
	\begin{theorem}
		[R\"odl]
		\label{thm:rodl}
		For every $\eps\in(0,\frac12]$ and every graph $H$, there exists $\delta>0$ such that every $H$-free graph has an $\eps$-restricted induced subgraph $F$ with $\abs F\ge\delta\abs G$.
	\end{theorem}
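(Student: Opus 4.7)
The plan is to prove this via Szemer\'edi's regularity lemma combined with a Ramsey-type argument on regular pairs, which is the classical route to results of this kind. Let $G$ be an $H$-free graph, and fix auxiliary parameters $\eta\ll\eps'\ll\eps$ chosen in terms of $\eps$ and $\abs{V(H)}$.

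First, I would apply Szemer\'edi's regularity lemma with parameter $\eta$ to obtain an $\eta$-regular partition $V(G)=V_0\cup V_1\cup\cdots\cup V_m$ with $\abs{V_0}\le\eta\abs G$, equal-sized parts $V_1,\ldots,V_m$, and $m$ depending only on $\eta$. Next, I would $3$-color each regular pair $(V_i,V_j)$ according to its edge density: call it \emph{sparse} if the density is at most $\eps'$, \emph{dense} if it is at least $1-\eps'$, and \emph{medium} otherwise; irregular pairs (at most $\eta\binom{m}{2}$ of them) receive an arbitrary color and will be discarded. Applying Ramsey's theorem to this auxiliary $3$-colored complete graph on $[m]$---taking $\eta$ small enough that $m$ exceeds the Ramsey number $R_3(\abs{V(H)})$---extracts a monochromatic set $I\subset[m]$ with $\abs I=\abs{V(H)}$.

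The crucial step is to rule out the \emph{medium} color: if all $\binom{\abs I}{2}$ regular pairs within $I$ have densities in $[\eps',1-\eps']$, then the standard counting (embedding) lemma for regular partitions lets us embed any fixed graph on $\abs I$ vertices---in particular $H$ itself---as an induced subgraph of $G$ using one vertex from each $V_i$ with $i\in I$, contradicting $H$-freeness. Hence $I$ must be monochromatic in the sparse or dense color. In the sparse case, put $F_0:=\bigcup_{i\in I}V_i$; then $\abs{F_0}\ge\abs{V(H)}\cdot\abs{V_1}\ge\delta\abs G$ for some $\delta=\delta(\eps,H)>0$, and the total number of edges in $G[F_0]$ is at most $\eps'\binom{\abs{F_0}}{2}$ plus an $O(\abs{F_0}^2/m)$ contribution from within-part edges. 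A Markov-type cleanup, deleting the vertices of $F_0$ whose degree in $G[F_0]$ exceeds $\eps\abs{F_0}$, yields the desired $\eps$-sparse induced subgraph $F$ of size at least $\abs{F_0}/2$. The dense case is symmetric and produces a $(1-\eps)$-dense induced subgraph by applying the same argument to non-edges.

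The main obstacle is the embedding step in the medium case: one must verify that a regular blow-up of $K_{\abs{V(H)}}$ with densities in $[\eps',1-\eps']$ on \emph{every} pair contains $H$ as an \emph{induced} subgraph, not merely as a subgraph. This requires invoking the counting lemma simultaneously for edges \emph{and} non-edges of $H$, which forces the parameter hierarchy $\eta\ll\eps'/\abs{V(H)}$ and a careful choice of density thresholds. A secondary technical point is the passage from an average-density bound to a maximum-degree bound, resolved by the Markov-type cleanup indicated above.
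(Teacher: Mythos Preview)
The paper does not give its own proof of this statement; \cref{thm:rodl} is quoted as a classical result of R\"odl and used as a black box, so there is nothing to compare your argument against. Your outline is the standard regularity-plus-Ramsey route and is essentially correct, but there is one quantitative slip that breaks the final step as written.

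You take the monochromatic set $I$ to have size exactly $\abs{V(H)}$, and in the sparse case set $F_0=\bigcup_{i\in I}V_i$. You then claim that the within-part edges contribute $O(\abs{F_0}^2/m)$; this is wrong. The within-part contribution is at most $\abs I\binom{\abs{V_1}}{2}\le \abs{F_0}^2/(2\abs I)=\abs{F_0}^2/(2\abs{V(H)})$, which depends only on $\abs{V(H)}$, not on $m$. So if $\eps<1/(2\abs{V(H)})$ your Markov cleanup cannot push the maximum degree below $\eps\abs{F_0}$. The fix is easy: ask Ramsey for a monochromatic set of size $\max(\abs{V(H)},\lceil 2/\eps\rceil)$ instead of $\abs{V(H)}$; both quantities depend only on $\eps$ and $H$, so $m$ can still be chosen large enough, and now the within-part density is at most $\eps/4$ and the cleanup goes through. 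A secondary looseness is your handling of irregular pairs: assigning them an ``arbitrary colour'' means one could land inside $I$ and spoil either the embedding or the sparsity count. This is also routinely fixable---for instance, since there are at most $\eta\binom m2$ irregular pairs, a Tur\'an-type argument gives a subset of roughly $1/\eta$ parts with no irregular pairs among them, and you can run the Ramsey argument there.
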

	
	We will apply this result with $H=P_5$ and $\eps$ some appropriately small absolute constant.
	If the $\eps$-restricted induced subgraph $F$ turns out to be $(1-\eps)$-dense then we can follow the argument in~\cite[Section 7]{density7} and derive a complete blockade as in the second outcome of \cref{thm:main}.
	Here, for technical reasons involving $\rho$, to make the argument in this case work we need to employ the full \erh{} property of $P_5$ (actually, in its equivalent \dd polynomial R\"odl\ee{} form);
	this will be done in \cref{sec:incre}.
	
	Thus we may assume that $F$ is $\eps$-sparse. In this case we apply the following well-known result on linear-sized anticomplete pairs in sparse $P_5$-free graphs~\cite{MR3343757}:
	\begin{theorem}
		[Bousquet--Lagoutte--Thomass\'e]
		\label{thm:p5sparse}
		There exists $\eta\in(0,\frac14]$ such that every $\eta$-sparse $P_5$-free graph $F$ with $\abs F\ge2$ contains an anticomplete $(2,\eta\abs F)$-blockade.
	\end{theorem}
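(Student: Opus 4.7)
My plan is to reduce to the connected case, then to perform a BFS around a well-chosen vertex and argue that either two of the BFS layers already form the desired anticomplete pair or the bulk of the graph lies in the second layer, in which case an additional structural argument using $P_5$-freeness locates the pair. In more detail:

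First, I would dispose of the disconnected case. If $F$ has a union of connected components of total size in $[\eta\abs F,(1-\eta)\abs F]$, partitioning the component set into two classes of these sizes produces an anticomplete $(2,\eta\abs F)$-blockade and we are done. Otherwise one component $F_0$ contains more than $(1-\eta)\abs F$ vertices, and it suffices to find the required blockade in $F_0$ (adjusting $\eta$ by a constant factor). So we may assume $F$ is connected.

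Second, since $F$ is $P_5$-free and connected, its diameter is at most three (a shortest path of length four is an induced $P_5$). Choose a vertex $v\in V(F)$ and stratify $V(F)=L_0\cup L_1\cup L_2\cup L_3$ by BFS distance from $v$, so $L_0=\{v\}$, $L_1=N(v)$, and by sparsity $\abs{L_1}\le\eta\abs F$. By construction $(L_1,L_3)$ is an anticomplete pair, so if both $\abs{L_1}\ge\eta'\abs F$ and $\abs{L_3}\ge\eta'\abs F$ for a suitable $\eta'$ we are done immediately. One case to dispatch is when $\abs{L_1}$ is tiny: here choosing $v$ to have maximum degree forces all degrees of $F$ to be smaller than we had assumed, at which point the graph is so sparse that even a crude averaging produces a linear-sized independent set and then a linear-sized anticomplete pair splitting it.

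The main case, and the main obstacle, is $\abs{L_3}<\eta'\abs F$, forcing $\abs{L_2}\ge(1-O(\eta))\abs F$. Here I would extract the following structural consequence of $P_5$-freeness: for any $u\in L_1$ and any connected component $C$ of $F[L_2\setminus N(u)]$ that contains a neighbor of $u$ along some induced path of length at least two inside $L_2$, one obtains an induced $P_5$ of the form $v\hbox{-}u\hbox{-}a\hbox{-}b\hbox{-}c$ with $a\in N(u)\cap L_2$ and $bc$ an edge of the path in $C$, a contradiction. Iterating this observation shows that the components of $F[L_2]$ can be grouped so that each group is dominated (in a controlled sense) by a single vertex of $L_1$. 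Since $\abs{L_1}\le\eta\abs F$ but $\abs{L_2}$ is almost all of $V(F)$, averaging yields a vertex $u^*\in L_1$ whose associated piece $P^*\subset L_2$ is linear. Inside the induced subgraph $F[P^*]$ (which is again $P_5$-free) I would repeat the argument: a fresh BFS around an appropriate vertex of $P^*$, using that edges from $P^*$ back into $V(F)\setminus P^*$ are constrained by the domination structure, delivers the desired anticomplete $(2,\eta\abs F)$-blockade.

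The delicate step is the propagation of $P_5$-freeness through the $L_1$--$L_2$ bipartite structure into the ``domination by a single $L_1$-vertex'' statement and the matching sparsity-based averaging; the rest is bookkeeping of constants to choose $\eta$ small enough that the induction through two nested BFS decompositions and the initial disconnection reduction all retain a linear fraction of $V(F)$.
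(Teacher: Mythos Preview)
The paper does not prove this theorem; it is quoted from Bousquet--Lagoutte--Thomass\'e \cite{MR3343757} and used as a black box, so your proposal has to stand on its own.

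There are two genuine gaps in the main case $\abs{L_2}\ge(1-O(\eta))\abs F$.

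First, the averaging step is arithmetically wrong. You partition $L_2$ into at most $\abs{L_1}$ pieces and claim pigeonhole gives a linear piece. But $\abs{L_1}$ can be as large as $\eta\abs F$, so the guaranteed piece size is only $\abs{L_2}/\abs{L_1}\ge(1-O(\eta))/\eta$, a constant depending on $\eta$, not a linear fraction of $\abs F$. Recursing into a constant-size $P^*$ cannot produce an anticomplete pair of size $\eta\abs F$. (The same arithmetic mistake appears in your ``tiny $\abs{L_1}$'' case: a graph with maximum degree $\eta'\abs F$ only yields an independent set of size about $1/\eta'$ by greedy, not a linear one.)

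Second, the structural claim that the components of $F[L_2]$ can be grouped so that each group is dominated by a single vertex of $L_1$ is false. Take $L_0=\{v\}$, $L_1=\{a,b\}$ with $ab\in E(F)$, and $L_2=\{x,y,z\}$ with edges $ax,ay,by,bz,xy,yz$; a direct check shows this six-vertex graph is $P_5$-free, $F[L_2]$ is connected, yet neither $a$ nor $b$ dominates $\{x,y,z\}$. Your $P_5$-argument only shows that for each $u\in L_1$, every vertex of $N(u)\cap L_2$ is complete to each component of $F[L_2\setminus N(u)]$ it meets; this is much weaker than single-vertex domination of components of $F[L_2]$ and does not support the subsequent pigeonhole.

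For reference, the argument in \cite{MR3343757} avoids both issues by a short iteration rather than a single BFS: choose $v_1$ and let $C_1$ be the largest component of $F\setminus N[v_1]$; if $\abs{C_1}\le(1-\eps)\abs{F\setminus N[v_1]}$ then the components of $F\setminus N[v_1]$ split into two anticomplete linear parts; otherwise pick $v_2\in N(v_1)$ with a neighbour in $C_1$, let $C_2$ be the largest component of $C_1\setminus N(v_2)$, and repeat. If the process runs four rounds one obtains $v_1\text-v_2\text-v_3\text-v_4\text-v_5$ induced (each $v_{i+1}\in C_{i-1}$ is outside $N[v_j]$ for all $j<i$), contradicting $P_5$-freeness. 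Each round removes only $O(\eta)\abs F$ vertices, so the anticomplete pair found is linear.
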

	
	This result is qualitatively optimal, in the sense that $F$ could just be a disjoint union of many copies of some graph and there is not much to improve even if one considers $\rho$ instead of size. Moreover, given how small $\eta$ is, iterating these anticomplete pairs would result in large exponents of $\alpha(G)$.
	At this point the situation is opposite to what happened in~\cite{density7}: there, the sparse case was assumed to be straightforward given \cref{thm:p5sparse} and much effort was devoted to the dense case; but here the dense case becomes simpler (provided the \erh{} property of $P_5$) while the sparse case appears nontrivial and a more global argument is required.
	
	Our approach to handle this is as follows. According to \cref{thm:p5sparse} and the above sketch of the dense case, we know that every $P_5$-free graph contains either a linear-sized anticomplete pair or a complete blockade satisfying the second outcome of \cref{thm:main} (see \cref{lem:incre}).
	Thus we can assume that every induced subgraph of the connected graph $G$ with linear $\rho$ does not contain a constant-dense linear-sized induced subgraph, and so contains a linear-sized (thus linear-$\rho$) anticomplete pair.
	Given such a ``locally sparse'' assumption, we can now adapt the ``decomposing along anticomplete pairs'' argument of~\cite[Section 4]{2024p5}.
	Our strategy will be using the adapted decomposition for $\rho$ (\cref{lem:anti}) to iteratively obtain anticomplete pairs $(A,B)$ in $G$ such that $\min(\rho(A),\rho(B))/\rho(G)$ is increasingly large.
	At the beginning, this ratio will be some small fixed constant resulting from \cref{thm:p5sparse};
	and after the first step, we will increase the ratio to $1-\eps$ for some fixed small $\eps>0$ for {\em all} connected $P_5$-free $G$ (\cref{lem:1step}).
	Provided that our desired complete pairs or blockades outcome as in \cref{thm:main} cannot occur, this allows us to upgrade the above ``locally sparse'' hypothesis for every such graph $G$ and every induced subgraph $F$ of $G$ with $\rho(F)\ge (1-\sqrt{\eps})\rho(G)$: $F$ contains an anticomplete pair of vertex subsets each with Hall ratio at least $(1-O(\eps))\rho(G)$.
	Now, starting from the second step, we will iteratively decrease $\eps$ (at each step for all $G$, see \cref{lem:incrho}) so that each connected graph $G$ in question will then be very close to being disconnected, in the sense that the minimal cutset separating the anticomplete pair $(A,B)$ in $G$ has Hall ratio much less than $\operatorname{poly}(\eps)\cdot\rho(G)$.
	But such a cutset is always nonempty; and so when $\eps$ becomes less than some negative power of $\rho(G)$, the first outcome of \cref{thm:main} automatically holds with $y=\eps$.
	
	Compared to the technique of~\cite[Section 4]{2024p5}, our decomposition argument in this case is simpler because of two reasons that also emphasise how amenable $\rho$ is to counting vertices.
	The first reason is, we can assume that there is always a linear-$\rho$ anticomplete pair in any linear-$\rho$ induced subgraphs given what we already know from the \erh{} property of $P_5$.
	In contrast, in~\cite{2024p5} obtaining a linear-$\chi$ anticomplete pair was nontrivial and could only be done assuming that there does not exist a ``locally dense'' and high-$\chi$ induced subgraph or a ``polynomial versus linear'' high-$\chi$ complete pair.
	The second reason is, under the assumption that the first outcome and the $k=2$ case of the second outcome (linear-$\rho$ complete pairs in other words) of \cref{thm:main} cannot occur, at each step of the aforementioned procedure we can use the \dd comb\ee{} lemma~\cite{MR4563865} (\cref{lem:comb}) to control the Hall ratio of the minimal cutset in question or deduce the second outcome of \cref{thm:main} (now with large $k$).
	On the other hand, such a scenario did not happen in~\cite{2024p5} because a suitable analogue of the \dd comb\ee{} lemma for $\chi$ has not been discovered.
	This finishes the summary of the proof ideas of \cref{thm:main}.
	
	\section{Density increment}
	\label{sec:incre}
	The main result of this section is as follows.
	\begin{lemma}
		\label{lem:incre}
		There exists $b\ge4$ such that for every $\eps\in(0,\frac12]$, every $P_5$-free graph $G$ with $\abs G\ge2$ contains either:
		\begin{itemize}
			\item an anticomplete pair $(A,B)$ with $\abs A,\abs B\ge2^{-b}\abs G$;
			
			\item a complete pair $(X,Y)$ with $\psi(X)\ge \eps^b\psi(G)$ and $\psi(Y)\ge2^{-b}\psi(G)$; or
			
			\item a complete blockade $(B_1,\ldots,B_k)$ in $G$ with $k\ge1/\eps$ and $\psi(B_i)\ge k^{-b}\psi(G)$ for all $i\in[k]$.
		\end{itemize}
	\end{lemma}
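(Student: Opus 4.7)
My plan is to apply \cref{thm:rodl} to $G$, obtaining a large induced subgraph $F$ that is either sparse or dense, and to handle these two cases separately. Let $\eta\in(0,\frac14]$ be the constant from \cref{thm:p5sparse} and let $\delta_0>0$ be the constant produced by \cref{thm:rodl} applied with $H=P_5$ and parameter $\eta$. I would take $b$ to be a sufficiently large absolute constant, fixed at the end.

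Applying \cref{thm:rodl} produces an $\eta$-restricted induced subgraph $F$ of $G$ with $\abs F\ge\delta_0\abs G$, and since $\alpha(F)\le\alpha(G)$ we automatically have $\psi(F)\ge\delta_0\psi(G)$. In the sparse case, where $F$ is $\eta$-sparse, \cref{thm:p5sparse} yields an anticomplete $(2,\eta\abs F)$-blockade in $F$ whose two sides $(A,B)$ satisfy $\abs A,\abs B\ge\eta\delta_0\abs G\ge 2^{-b}\abs G$ as soon as $b$ is large enough, giving the first outcome.

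In the remaining case where $F$ is $(1-\eta)$-dense, the plan is to adapt the iterative sparsification argument of \cite[Section 7]{density7} to the Hall-ratio setting. The key extra ingredient here is the \dd polynomial R\"odl\ee{} form of the \erh{} property for $P_5$ proven in \cite{density7}: there exists $c>0$ such that for every $\eps'\in(0,\frac12]$, every $P_5$-free graph $H$ contains an $\eps'$-restricted induced subgraph of size at least $(\eps')^{O(1)}\abs H^c$. I would iteratively apply this to produce a sequence $F=F_0\supset F_1\supset\cdots$ of $P_5$-free induced subgraphs of strictly increasing density, with density parameters $\eta=\eps_0>\eps_1>\cdots$ shrinking so rapidly that the process terminates within $O(\log\log(1/\eps))$ steps. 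At each step, either (a) $F_{j+1}$ is $(1-\eps_{j+1})$-dense and we continue, or (b) $F_{j+1}$ is $\eps_{j+1}$-sparse; in case (b), combining the density of $F_j$ with a further \erh{}-based extraction inside $F_{j+1}$ produces a complete pair $(X,Y)$ in $G$ with $X$ a high-$\psi$ piece of $F_{j+1}$ and $Y$ the large common $G$-neighborhood of $X$ within $F_j\setminus F_{j+1}$, giving the second outcome. If the iteration runs to its end, the terminal $F_t$ is so close to a clique that it contains a clique $K$ of size at least $\abs{F_t}/2$; equipartitioning $K$ into $k:=\lceil 1/\eps\rceil$ blocks then produces the desired complete blockade with $\psi(B_i)=\abs K/k$, giving the third outcome.

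The hard part will be the bookkeeping in the dense case: tracking how $\eps$ and the ratio $\psi(G)/\abs G$ propagate through all the iterations of polynomial R\"odl, in particular showing that the cumulative size or $\psi$ loss at each step remains polynomial in $\eps$ and $1/k$, and that the conversion of an internal sparse piece into a global complete pair in sub-case (b) yields the asymmetric $\psi$-bounds $\psi(X)\ge\eps^b\psi(G)$ and $\psi(Y)\ge 2^{-b}\psi(G)$ simultaneously for an absolute-constant exponent $b$.
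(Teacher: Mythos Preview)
Your high-level plan---apply R\"odl's theorem, split into a sparse case handled by \cref{thm:p5sparse} and a dense case handled by density increment---matches the paper exactly, and the sparse case is fine. The dense case, however, has two concrete problems.

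First, you misstate polynomial R\"odl: the correct version (\cref{thm:p5rodl}) gives an $\eps'$-restricted induced subgraph of size at least $(\eps')^{O(1)}\abs H$, linear in $\abs H$, not $\abs H^c$ with $c<1$. With only a sublinear bound the cumulative losses over the iteration would not stay polynomial.

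Second, and more seriously, your mechanism in case~(b) does not work as written. If $X$ is a high-$\psi$ piece of the sparse $F_{j+1}$ and you pass to the common neighbourhood $Y$ of $X$ inside the $(1-\eps_j)$-dense $F_j$, the union bound only gives $\abs Y\ge(1-\abs X\eps_j)\abs{F_j}$, which is worthless once $\abs X>1/\eps_j$; and since you need $\abs X\ge\psi(X)\ge\eps^b\psi(G)$, this will typically be the case. The paper's fix (\cref{thm:p5antirodl,lem:p5coml}) is to first iterate \cref{thm:p5sparse} inside the sparse piece to produce an \emph{anticomplete blockade} $(B_1,\ldots,B_k)$ with $k$ polynomially large, pass to connected high-$\psi$ subsets $A_i\subset B_i$, and then use the key $P_5$-freeness fact that no vertex outside can be mixed on two different $A_i$'s. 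Pigeonhole over the $k$ blocks then locates some $A_i$ with only $\abs G/k$ mixed vertices, and density bounds the anticomplete vertices; together these force most of $F_j$ to be complete to $A_i$. The paper explicitly remarks after \cref{lem:p5coml} that a direct port of the argument in~\cite[Section 7]{density7} without this refinement likely fails, precisely because one has no control on $\abs{A_i}$ relative to $\abs{B_i}$.

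Finally, the paper does not run the iteration until reaching a near-clique; instead it iterates the complete-pair extraction (\cref{lem:p5compl}) to peel off $k\ge1/\eps$ blocks directly, and the density increment (\cref{lem:dense}) terminates when the density parameter $y$ drops below $\psi(G)^{-2/a}$, at which point a large clique appears automatically. Your near-clique endgame is plausible but would need the same care to keep $\abs{F_t}$ polynomial in $\psi(G)$.
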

	
	In view of \cref{thm:p5sparse}, to prove \cref{lem:incre} we will deal with dense $P_5$-free graphs.
	As mentioned in \cref{sec:sketch}, the argument in this case is similar to the second part of the proof of the \erh{} property of $P_5$~\cite[Section  7]{density7}.
	To begin with, we require the following \dd polynomial R\"odl\ee{} property of $P_5$, which was shown to be equivalent to its \erh{} property~\cite{density7} (see also~\cite{bfp2024} for a proof of equivalence with $P_5$ replaced by any graph $H$).
	\begin{theorem}
		[Nguyen--Scott--Seymour]
		\label{thm:p5rodl}
		There exists $d\ge2$ such that for every $\eps\in(0,\frac12]$, every $P_5$-free graph $G$ contains an $\eps$-restricted induced subgraph on at least $\eps^d\abs G$ vertices.
	\end{theorem}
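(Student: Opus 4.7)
The plan is to deduce this theorem from the \erh{} property of $P_5$ (that is, \cref{conj:eh} with $H = P_5$), verified in \cite{density7}, which provides some $c > 0$ such that $\max(\alpha(G'), \omega(G')) \ge \abs{G'}^c$ for every $P_5$-free graph $G'$. As mentioned in the excerpt, there is a general equivalence between the \erh{} and polynomial R\"odl properties for an arbitrary graph $H$ \cite{bfp2024}, so it suffices to prove this equivalence for $H = P_5$.

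First I would fix a $P_5$-free graph $G$ on $n$ vertices and $\eps \in (0,\tfrac12]$, aiming for an $\eps$-restricted induced subgraph of size at least $\eps^d n$, and argue by induction on $n$. If $G$ is already $\eps$-restricted we take $G$ itself; otherwise, the \erh{} bound yields a clique $K$ or a stable set of size at least $n^c$ in $G$. By the symmetry that the desired conclusion is self-complementary, assume it is a clique $K$.

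Next I would partition $V(G)\setminus K$ into three parts according to adjacency to $K$: a ``high'' part $D$ of vertices with more than $(1-\eps)\abs K$ neighbours in $K$, a ``low'' part $L$ of vertices with fewer than $\eps\abs K$ neighbours in $K$, and a ``mixed'' part $M$. If $\abs D \ge \eps^{a} n$ for a suitable absolute constant $a$, I would apply the inductive hypothesis to $G[D]$ to extract a $(1-\eps)$-dense induced subgraph of $D$ of polynomial size in $\eps$, and combine it with $K$ after pruning the $O(\eps)$-fraction of missing edges between them to produce a $(1-O(\eps))$-dense subgraph of the required size (after rescaling $\eps$). The case $\abs L \ge \eps^{a} n$ is symmetric and produces an $\eps$-sparse subgraph. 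The bookkeeping is arranged so that the exponent $d$ absorbs the $\eps$-factors lost across all recursive steps, which terminate in a number of iterations depending only on $\eps$.

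The main obstacle I expect is the remaining case, in which nearly all of $V(G)\setminus K$ lies in the ``mixed'' class $M$. Here a more global use of $P_5$-freeness is needed: for each $v \in M$, the partition of $K$ into neighbours and non-neighbours of $v$ is nontrivial, and $P_5$-freeness forces strong compatibility conditions between these partitions across different vertices of $M$. Exploiting this rigidity, one can either extract a homogeneous subset within $M$ of polynomial size (so that induction applies to yield the required restricted substructure) or obtain a forbidden induced $P_5$. This is essentially the ``iterative sparsification'' step of \cite[Section 7]{density7}, and carrying it out with only polynomial loss in $\eps$ at each stage is the technical heart of the argument.
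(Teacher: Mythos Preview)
This theorem is not proved in the paper at all; it is quoted as a known result from \cite{density7}, with \cite{bfp2024} cited for the general equivalence between the \erh{} and polynomial R\"odl properties for an arbitrary excluded graph. Your opening paragraph already says exactly this, and for the purposes of the present paper that citation is the entire ``proof''.

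The sketch you give afterwards, however, does not amount to a proof of the equivalence and has genuine gaps. First, the induction on $n$ does not close: applying the inductive hypothesis to $G[D]$ yields an $\eps$-restricted subgraph of size $\eps^d\abs D$, and even with $\abs D\ge\eps^a n$ you only obtain $\eps^{d+a}n$, so the exponent drifts; your comment that the recursion ``terminate[s] in a number of iterations depending only on $\eps$'' points to a different iterative scheme, but then it is no longer an induction on $n$ and you have not specified what quantity is decreasing or why the process halts. (Also, the inductive hypothesis returns an $\eps$-restricted subgraph, not necessarily a $(1-\eps)$-dense one, so the ``combine it with $K$'' step is unjustified.) Second, the ``mixed'' case $M$ is not handled: you openly defer it to \cite[Section~7]{density7}, but that section is the density-increment step in the \erh{} argument for $\overline{P_5}$-free graphs, which is a different setup from your degree-to-$K$ partition, and invoking it here is in any case circular since \cref{thm:p5rodl} is itself extracted from \cite{density7}. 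The actual proofs of the implication in \cite{density7,bfp2024} proceed along different lines; if you want to reproduce one, follow those sources rather than the outline above.
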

	We will refine this further for application. To do so, we applying \cref{thm:p5sparse} repeatedly to get:
	\begin{lemma}
		\label{thm:sparsep5}
		There exists $a\ge2$ such that for every $\eps\in(0,\frac12]$ and for
		every $\eps^a$-sparse $P_5$-free graph $G$ with $\abs G\ge\eps^{-a}$,
		there is an anticomplete blockade $(B_1,\ldots,B_k)$ in $G$ with $k\ge\eps^{-1}$ and $\abs{B_i}\ge\eps^a\abs G$ for all $i\in[k]$.
	\end{lemma}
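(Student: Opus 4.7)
The plan is to iterate \cref{thm:p5sparse} along a binary tree of anticomplete splits, approximately $\log_2(1/\eps)$ times, and choose $a$ large enough in terms of $\eta$ (from \cref{thm:p5sparse}) that the sparsity hypothesis of \cref{thm:p5sparse} is preserved at every iteration. Concretely, pick $a$ to be any integer with $a\ge 2\log_2(1/\eta)+O(1)$, and set $\ell:=\lceil\log_2(1/\eps)\rceil$.

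The construction proceeds inductively. I would build, for each $i\in\{0,1,\ldots,\ell\}$, an anticomplete blockade $\mac B_i$ in $G$ consisting of $2^i$ pieces, each of size at least $\eta^i\abs G$. Setting $\mac B_0=(V(G))$ starts the induction. Given $\mac B_i$ with $i<\ell$ and a piece $P\in\mac B_i$, note that $G[P]$ is $P_5$-free and that the maximum degree inside $G[P]$ is at most the maximum degree of $G$, namely at most $\eps^a\abs G$; hence $G[P]$ is $(\eps^a\abs G/\abs P)$-sparse. Since $\abs P\ge\eta^i\abs G$, the choice $a\ge 2\log_2(1/\eta)+O(1)$ and $i\le\ell-1\le\log_2(1/\eps)$ ensures $\eps^a/\eta^i\le\eta$, so $G[P]$ is $\eta$-sparse; and $\abs P\ge\eta^i\abs G\ge\eta^\ell\eps^{-a}\ge2$ for $a$ large enough. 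Thus \cref{thm:p5sparse} applies to $G[P]$ and yields an anticomplete pair $(P',P'')$ in $G[P]$ with $\abs{P'},\abs{P''}\ge\eta\abs P\ge\eta^{i+1}\abs G$. Replacing each $P\in\mac B_i$ with $\{P',P''\}$ yields $\mac B_{i+1}$, which is still anticomplete because disjoint pieces of $\mac B_i$ were anticomplete to begin with.

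At the end, $\mac B_\ell$ is an anticomplete blockade of $k=2^\ell\ge1/\eps$ pieces, each of size at least $\eta^\ell\abs G$. A short calculation using $\ell\le\log_2(1/\eps)+1$ gives $\eta^\ell\ge\eta\cdot\eps^{\log_2(1/\eta)}\ge\eps^a$ provided $a\ge2\log_2(1/\eta)+1$, which furnishes the required lower bound $\abs{B_i}\ge\eps^a\abs G$.

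There is no real obstacle here: the argument is a straightforward recursive doubling, and the only technical point is bookkeeping the choice of $a$ so that the sparsity condition $\eps^a/\eta^i\le\eta$ and the final size bound $\eta^\ell\ge\eps^a$ hold simultaneously at the worst level $i=\ell-1$. Both reduce to inequalities of the form $\eps^{a-\log_2(1/\eta)}\le\eta$, which are ensured by $\eps\le\tfrac12$ together with $a\ge 2\log_2(1/\eta)+O(1)$.
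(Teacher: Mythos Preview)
Your proposal is correct and follows essentially the same approach as the paper: both iterate \cref{thm:p5sparse} $\lceil\log_2(1/\eps)\rceil$ times to double the number of anticomplete blocks, choosing $a=2\log_2(1/\eta)$ (the paper) or $a\ge 2\log_2(1/\eta)+O(1)$ (you) so that the $\eta$-sparsity hypothesis persists and the final block sizes are at least $\eps^a\abs G$. The only cosmetic difference is that the paper phrases the iteration via a maximality argument (``let $k$ be maximal such that an anticomplete $(2^k,\eta^k\abs G)$-blockade exists'') while you build the blockades $\mac B_0,\ldots,\mac B_\ell$ explicitly; and in your size check you wrote $\eta^\ell$ where $\eta^{\ell-1}$ would have been slightly cleaner, but this is harmless since $\eta^{\ell-1}\eps^{-a}\ge\eta^{-1}\ge4$.
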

	\begin{proof}
		We claim that $a:=2\log\eta^{-1}$ satisfies the theorem,
		where $\eta\in(0,\frac12)$ is given by \cref{thm:p5sparse}.
		To see this, let $G$ be an $\eps^a$-sparse $P_5$-free graph;
		and let $k\in\{0,1,\ldots,\ceil{\log\frac1\eps}\}$ be maximal such that there is an anticomplete $(2^k,\eta^{k}\abs G)$-blockade $(B_1,\ldots,B_{2^k})$ in $G$.
		
		If $k<\ceil{\log\frac1\eps}$, then
		for every $i\in[2^k]$, since 
		$\abs{B_i}\ge \eta^k\abs G\ge \eta^{-1} \cdot\eta^{2\log\frac1\eps}\abs G=\eta^{-1}\cdot \eps^a\abs G\ge\eta^{-1}$,
		$G[B_i]$ is $\eta$-sparse;
		and so there are disjoint and anticomplete $D_{2i-1},D_{2i}\subset B_i$ with $\abs{D_{2i-1}},\abs{D_{2i}}\ge \eta\abs{B_i}\ge \eta^{k+1}\abs G$.
		Then $(D_1,D_2,\ldots,D_{2^{k+1}})$ is an anticomplete $(2^{k+1},\eta^{k+1}\abs G)$-blockade in $G$ while $k+1\le\ceil{\log\frac1\eps}$,
		contrary to the maximality of $k$.
		
		Hence $k=\ceil{\log\frac1\eps}$.
		Now, since $2^k\ge 2^{\log\frac1\eps}=\eps^{-1}$
		and $\eta^k\ge\eta^{2\log\frac1\eps}=\eps^a$,
		there is an anticomplete $(2^k,\eta^k\abs G)$-blockade in $G$.
		This proves \cref{thm:sparsep5}.
	\end{proof}
	Combining \cref{thm:sparsep5,thm:p5rodl} gives the following refinement of \cref{thm:p5rodl}.
	\begin{lemma}
		\label{thm:p5antirodl}
		There exists $b\ge4$ such that for every $\eps\in(0,\frac12]$, every $P_5$-free graph $G$ contains either an $(1-\eps)$-dense induced subgraph on at least $\eps^b\abs G$ vertices,
		or an anticomplete blockade $(B_1,\ldots,B_k)$ with $k\ge\eps^{-1}$ and $\abs{B_i}\ge\eps^b\abs G$ for all $i\in[k]$.
	\end{lemma}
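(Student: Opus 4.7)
The plan is to combine \cref{thm:p5rodl} with \cref{thm:sparsep5} in a straightforward two-step scheme, where the output of the first is either already the desired dense subgraph or gets fed into the second. Let $a\ge2$ be the constant from \cref{thm:sparsep5} and $d\ge2$ the constant from \cref{thm:p5rodl}; I claim that $b:=a(d+1)$ works, and indeed $b\ge 6\ge 4$.

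Given $\eps\in(0,\frac12]$ and a $P_5$-free graph $G$, I would first apply \cref{thm:p5rodl} to $G$ with parameter $\eps^a\in(0,\frac12]$ to obtain an $\eps^a$-restricted induced subgraph $F\subset V(G)$ with $\abs F\ge\eps^{ad}\abs G$. If $G[F]$ is $(1-\eps^a)$-dense, then since $\eps^a\le\eps$ it is also $(1-\eps)$-dense, and the first conclusion of the lemma holds with this $F$ (noting $\abs F\ge\eps^{ad}\abs G\ge\eps^b\abs G$).

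If instead $G[F]$ is $\eps^a$-sparse and $\abs F\ge\eps^{-a}$, then I would apply \cref{thm:sparsep5} to $G[F]$ with parameter $\eps$ to produce an anticomplete blockade $(B_1,\ldots,B_k)$ in $G[F]$ (hence in $G$) with $k\ge\eps^{-1}$ and $\abs{B_i}\ge\eps^a\abs F\ge\eps^{a(d+1)}\abs G=\eps^b\abs G$ for each $i\in[k]$, yielding the second conclusion. The remaining corner case is $\abs F<\eps^{-a}$, in which $\abs G\le\abs F/\eps^{ad}<\eps^{-a(d+1)}=\eps^{-b}$, so $\eps^b\abs G<1$; then any single-vertex induced subgraph of $G$ is trivially $(1-\eps)$-dense and has size $1>\eps^b\abs G$, so the first conclusion again holds.

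Once \cref{thm:p5rodl} and \cref{thm:sparsep5} are on the table, the argument is essentially bookkeeping: the only substantive decision is the parameter with which to invoke \cref{thm:p5rodl}, namely $\eps^a$ rather than $\eps$, so that its ``sparse'' output is sparse enough to feed into \cref{thm:sparsep5} at parameter $\eps$ (which is what produces $k\ge\eps^{-1}$ parts). There is no obstacle I would expect to be hard here; the constant $b=a(d+1)$ is clearly not optimal but is the natural product of composing the two black boxes.
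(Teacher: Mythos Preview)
Your proof is correct and is essentially identical to the paper's own argument: the paper also sets $b:=ad+a=a(d+1)$, applies \cref{thm:p5rodl} with parameter $\eps^a$, and then feeds the sparse outcome into \cref{thm:sparsep5}. The only cosmetic difference is that the paper disposes of the trivial case $\abs G<\eps^{-b}$ at the outset rather than at the end as you do.
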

	\begin{proof}
		Let $a\ge2$ be given by \cref{thm:sparsep5}, and let $d\ge2$ be given by \cref{thm:p5rodl}.
		We claim that $b:=ad+a$ satisfies the theorem.
		To this end, let $G$ be a $P_5$-free graph;
		and we may assume $\abs G\ge\eps^{-b}$.
		By the choice of $d$, $G$ has an $\eps^a$-restricted induced subgraph $F$ with $\abs F\ge \eps^{ad}\abs G\ge\eps^{ad-b}=\eps^{-a}$.
		If $F$ is $(1-\eps^a)$-dense then we are done;
		so we may assume $F$ is $\eps^a$-sparse.
		By the choice of $a$,
		there is an anticomplete blockade $(B_1,\ldots,B_k)$ in $F$ with $k\ge\eps^{-1}$ and $\abs{B_i}\ge\eps^a\abs F\ge \eps^{a+ad}\abs G=\eps^b\abs G$ for all $i\in[k]$.
		This proves \cref{thm:p5antirodl}.
	\end{proof}
	
	Now we begin handling dense $P_5$-free graphs.
	In what follows we utilise the simple fact that $\psi(G)\le\max_F\psi(F)$ where $F$ ranges through all components of~$G$.
	\begin{lemma}
		\label{lem:p5coml}
		There exists $b\ge4$ such that for every $x,y\in(0,\frac12]$, every $(1-x)$-dense $P_5$-free graph $G$ with $\abs G\ge y^{-b^2}$ contains either:
		\begin{itemize}
			\item a $(1-y^b)$-dense induced subgraph with at least $y^{b^2}\abs G$ vertices; or
			
			\item a complete pair $(X,Y)$ such that $\psi(X)\ge y^{b^2}\psi(G)$ and $\abs Y\ge(1-x-y)\abs G$.
		\end{itemize}
	\end{lemma}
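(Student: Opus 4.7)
The plan is to apply \cref{thm:p5antirodl} to $G$ once, with parameter $\eps=y^b$, where $b$ is chosen sufficiently large in terms of the constant from \cref{thm:p5antirodl}; concretely, denoting that constant by $b_0$, I would take $b = Cb_0$ for a large enough absolute constant $C$. We may assume $y^b<x$ throughout, since otherwise $F = G$ is already $(1-y^b)$-dense of size $\abs G \ge y^{b^2}\abs G$, giving the first outcome trivially.

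Applying \cref{thm:p5antirodl} with $\eps=y^b$ yields either (A) a $(1-y^b)$-dense induced subgraph $F$ of $G$ with $\abs F \ge y^{bb_0}\abs G$, which (for $b\ge b_0$) directly provides the first outcome; or (B) an anticomplete blockade $(B_1,\ldots,B_k)$ in $G$ with $k \ge y^{-b}$ and $\abs{B_j} \ge y^{bb_0}\abs G$ for every $j\in[k]$. Case~(A) is the easy one; the remainder of the plan focuses on extracting a complete pair in Case~(B).

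In Case~(B), since $G$ is $(1-x)$-dense, every vertex of $B_1$ has at most $x\abs G$ non-neighbors in $G$, and all of $B_2\cup\cdots\cup B_k$ lies among these by anticompleteness, whence $\abs{B_2\cup\cdots\cup B_k} \le x\abs G$. I would then take $X$ to be a carefully chosen subset of $B_1$ and $Y$ to be the common $G$-neighborhood of $X$, further restricted away from $B_2\cup\cdots\cup B_k$. A double-count shows the union of non-neighbors of $X$ within $V(G)\setminus(B_2\cup\cdots\cup B_k)$ has size at most $\abs X\cdot(x\abs G - \abs{B_2\cup\cdots\cup B_k})$, so $\abs Y \ge (1-x-y)\abs G$ follows provided $\abs X$ is chosen appropriately. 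Meanwhile $\psi(X) \ge \abs X/\alpha(G)$, so the bound $\psi(X) \ge y^{b^2}\psi(G)$ is attained as soon as $\abs X \ge y^{b^2}\abs G$; the hypothesis $\abs G \ge y^{-b^2}$ guarantees that such an $\abs X$ is at least one vertex.

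The main obstacle is balancing the two constraints on $\abs X$ — the Hall-ratio lower bound $\abs X \ge y^{b^2}\abs G$ versus the upper bound coming from enforcing $\abs Y \ge (1-x-y)\abs G$ — in the regime $y<x$, where the upper bound on $\abs X$ becomes very restrictive. In this regime one must exploit the structural overlap of non-neighborhoods inside $B_1$ (all vertices share $B_2\cup\cdots\cup B_k$ as common non-neighbors, leaving only an ``extra'' non-neighbor budget of at most $x\abs G - \abs{B_2\cup\cdots\cup B_k}$ per vertex), possibly together with an iterative application of \cref{thm:p5antirodl} to refine the blockade, to sidestep the obstacle.
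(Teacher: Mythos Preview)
Your setup and Case~(A) match the paper exactly: take $b$ from \cref{thm:p5antirodl}, apply it with parameter $y^b$, and the dense outcome gives the first bullet directly. The gap is entirely in Case~(B).

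Your plan in Case~(B) is to pick $X\subset B_1$ and bound $\abs Y$ by a union bound over the non-neighbourhoods of vertices in $X$. This cannot work: each $v\in B_1$ may have up to $x\abs G - \abs{B_2\cup\cdots\cup B_k}$ non-neighbours outside $B_2\cup\cdots\cup B_k$, and since one can (and the paper does) assume $\abs{B_1\cup\cdots\cup B_k}\le \tfrac12 y\abs G$, this ``extra budget'' is still of order $x\abs G$. Your union bound then forces $\abs X=O(1+y/x)$, which is incompatible with $\abs X\ge y^{b^2}\abs G$. You correctly identify this tension at the end, but the suggested fixes (exploiting shared non-neighbours, or iterating \cref{thm:p5antirodl}) do not close it: nothing in your argument uses that $G$ is $P_5$-free beyond the single invocation of \cref{thm:p5antirodl}, and without further structural input the obstruction is genuine.

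The missing idea is this. Inside each $B_i$ pass to a connected $A_i\subset B_i$ with $\psi(A_i)\ge\psi(B_i)\ge y^{b^2}\psi(G)$. Because $G$ is $P_5$-free and the $A_i$ are connected and pairwise anticomplete, no vertex of $G\setminus B$ can be \emph{mixed} on two different $A_i$'s (a vertex mixed on both $A_i$ and $A_j$ would yield an induced path of length at least four through $A_i$, that vertex, and $A_j$). Hence the mixed vertices are partitioned among the $A_i$'s, and by averaging some $A_i$ has at most $\abs{G}/k\le y^b\abs G\le\tfrac12 y\abs G$ mixed vertices outside $B$. Every other vertex outside $B$ is either complete to $A_i$ or has all of $A_i$ as non-neighbours; the latter contributes at most $x\abs G$ vertices by $(1-x)$-density (pick any one vertex of $A_i$). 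Together with $\abs B\le\tfrac12 y\abs G$, this leaves at least $(1-x-y)\abs G$ vertices complete to $A_i$, and one takes $X=A_i$. The point is that $P_5$-freeness replaces your lossy union bound by an averaging argument over all $k\ge y^{-b}$ blocks simultaneously.
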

	\begin{proof}
		We claim that $b\ge4$ given by \cref{thm:p5antirodl} satisfies the lemma.
		To see this,
		let $G$ be a $(1-x)$-dense $P_5$-free graph;
		and let $\eps:=y^b$.
		By the choice of $b$, $G$ contains either:
		\begin{itemize}
			\item a $(1-y^b)$-dense induced subgraph on at least $y^{b^2}\abs G$ vertices; or
			
			\item an anticomplete blockade $(B_1,\ldots,B_k)$ with $k\ge y^{-b}$ and $\abs{B_i}\ge y^{b^2}\abs G$ for all $i\in[k]$.
		\end{itemize}
		
		If the first bullet holds then we are done; so let us assume that the second bullet holds.
		We may also assume $k=\ceil{y^{-b}}\le2y^{-b}$ and $\abs{B_i}=\ceil{y^{b^2}\abs G}\le 2y^{b^2}\abs G$ for all $i\in[k]$;
		thus for $B:=B_1\cup\cdots\cup B_k$ we have $\abs B\le k\cdot 2y^{b^2}\abs G\le 4y^{b^2-b}\abs G\le y^2\abs G\le\frac12 y\abs G$.
		For each $i\in[k]$, there exists $A_i\subset B_i$ such that $G[A_i]$ is connected and
		$$\psi(A_i)\ge\psi(B_i)\ge \abs{B_i}/\alpha(G)\ge y^{b^2}\psi(G).$$
		Now, since $G$ is $P_5$-free, no vertex in $G\setminus B$ is mixed on at least two of $A_1,\ldots,A_k$;
		and so there exists $i\in[k]$ for which at most $\abs{G\setminus B}/k\le \frac12y\abs G$ vertices in $G\setminus B$ are mixed on $A_i$.
		Thus, since $G$ is $(1-x)$-dense and $\abs B\le \frac12y\abs G$, there are at most $(x+y)\abs G$ vertices in $G$ with a nonneighbour in $A_i$.
		Hence there are at least $(1-x-y)\abs G$ vertices in $G$ complete to $A_i$.
		Letting $Y$ be the set of such vertices and $X:=A_i$ then verifies the second outcome of the lemma.
		This proves \cref{lem:p5coml}.
	\end{proof}
	We remark that in the above proof, in general we do not know how large each $A_i$ is compared to $B_i$. Thus, if one only applied~\cite[Theorem 5.1]{density7} (in the complement) to obtain $(B_1,\ldots,B_k)$ with the weaker property that each pair $(B_i,B_j)$ is either anticomplete or \dd dense\ee{}, then the density increment step combining $A_1,\ldots,A_k$ as in~\cite[Section 7]{density7} likely would not work.
	This highlights the strength of \cref{thm:p5antirodl} with two separate outcomes (an anticomplete blockade or a dense induced subgraph) and in turn explains why the \erh{} (or \dd polynomial R\"odl\ee) property of $P_5$ plays an important role in this case.

	Now, iterating \cref{lem:p5coml} with $x=y$ gives:
	\begin{lemma}
		\label{lem:p5compl}
		Let $b\ge4$ be given by \cref{lem:p5coml}. Then for every $y\in(0,\frac14]$, every $(1-y)$-dense $P_5$-free graph $G$ with $\abs G\ge y^{-2b^2}$ contains either:
		\begin{itemize}
			\item a $(1-y^b)$-dense induced subgraph on at least $y^{2b^2}\abs G$ vertices; or
			
			\item a complete blockade $(B_1,\ldots,B_k)$ with $k\ge1/y$ and $\psi(B_i)\ge y^{2b^2}\psi(G)$ for all $i\in[k]$.
		\end{itemize}
	\end{lemma}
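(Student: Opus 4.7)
The plan is to iterate \cref{lem:p5coml} to peel off complete pieces. Set $G_0 := G$ and $Y_0 := V(G)$. At each step $i \ge 0$, let $G_i := G[Y_i]$; since every vertex of $Y_i$ has at most $y\abs G$ non-neighbors in $G$ (hence in $Y_i$), $G_i$ is $(1 - y\abs G/\abs{Y_i})$-dense. Invoke \cref{lem:p5coml} on $G_i$ with its density parameter set to the actual deficit $x = y\abs G/\abs{Y_i}$ (valid whenever $\abs{Y_i} \ge 2y\abs G$) and with its ``$y$''-parameter set to some $y' \le y$ chosen as a function of $y$. If the first outcome of \cref{lem:p5coml} fires at some step $i$, return it: the resulting $(1-(y')^b)$-dense subgraph of $G_i$ on at least $(y')^{b^2}\abs{G_i}$ vertices provides the first outcome of \cref{lem:p5compl}, after absorbing a constant factor through the slack $b \ge 4$, $y \le \tfrac14$, and $\abs{Y_i}/\abs G = \Omega(1)$. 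Otherwise the second outcome yields a complete pair $(X_{i+1}, Y_{i+1})$ with $\psi(X_{i+1}) \ge (y')^{b^2}\psi(G_i)$ and $\abs{Y_{i+1}} \ge (1 - y\abs G/\abs{Y_i} - y')\abs{Y_i}$.

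If outcome 2 fires at all steps $i = 0, 1, \ldots, k-1$, then $(X_1, \ldots, X_k)$ is the desired complete blockade. Indeed, for any $j < i$ we have $X_i \subseteq Y_{i-1} \subseteq \cdots \subseteq Y_j$, and $X_j$ is complete to $Y_j$ by construction, so $X_j$ is complete to $X_i$. The Hall-ratio bound $\psi(X_j) \ge y^{2b^2}\psi(G)$ follows from $\psi(X_j) \ge (y')^{b^2}\psi(G_{j-1}) \ge (y')^{b^2}(\abs{Y_{j-1}}/\abs G)\psi(G)$ combined with the exponent gap between $b^2$ in \cref{lem:p5coml} and $2b^2$ in \cref{lem:p5compl}, which (given $y \le \tfrac14$ and $b \ge 4$) tolerates $\abs{Y_{j-1}}/\abs G$ and $y'/y$ being any positive constants.

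The main obstacle is calibrating the parameter $y'$ (and possibly varying it across iterations) so that at least $k \ge \lceil 1/y \rceil$ steps of outcome 2 can be executed before either the density hypothesis $x \le \tfrac12$ or the size hypothesis $\abs{G_i} \ge (y')^{-b^2}$ of \cref{lem:p5coml} is violated. Writing $r_i := \abs{Y_i}/\abs G$, the recurrence $r_{i+1} \ge (1 - y/r_i - y')r_i$ has an explicit solution whose behaviour is governed by $y'$: the critical threshold $r_i \ge 2y$ must be maintained throughout, and a short analysis shows this holds for the required number of iterations provided $y'$ is taken strictly smaller than $y$ (so that the $-y'r_i$ term is dominated by the natural decay of $r_i$). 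The size condition $\abs{Y_i} \ge (y')^{-b^2}$ reduces to the hypothesis $\abs G \ge y^{-2b^2}$ after absorbing constants using $b \ge 4$. The argument is tightest near $y = \tfrac14$, where the slack provided by the starting size hypothesis and by the $2b^2$-vs-$b^2$ exponent gap in the conclusion is what makes the iteration achieve the full count of $\lceil 1/y \rceil$ pieces.
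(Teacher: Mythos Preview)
Your approach is the same as the paper's: iterate \cref{lem:p5coml} on a shrinking remainder $Y_i$, peeling off one complete piece $X_{i+1}$ per step. The paper packages this by taking $k$ maximal with a complete blockade $(B_0,\dots,B_k)$ satisfying $\abs{B_k}\ge(1-2y)^k\abs{G}$, then applying \cref{lem:p5coml} to $G[B_k]$ with both parameters equal to $y$ to push $k$ up by one.

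The gap is your claim that the iteration survives $\lceil 1/y\rceil$ rounds with $r_i\ge 2y$. The recurrence $r_{i+1}\ge r_i-y-y'r_i$ carries an \emph{additive} loss of $y$ per step (coming from the absolute non-neighbour bound $y\abs{G}$, which does not shrink as $Y_i$ does); this term, not the multiplicative $-y'r_i$, is the bottleneck. In the limit $y'\to 0$ the guaranteed lower bound is only $r_i\ge 1-iy$, so the constraint $r_i\ge 2y$ is already lost at $i=\lfloor 1/y\rfloor-1$ and you extract at most $1/y-1$ pieces; taking $y'$ strictly smaller than $y$ attacks the wrong term. Concretely, at $y=\tfrac14$ you need four pieces, but after three rounds the density hypothesis $x_i\le\tfrac12$ is violated for every $y'>0$. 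The paper instead reads off $\abs{Y}\ge(1-2y)\abs{B_k}$ by invoking \cref{lem:p5coml} on $G[B_k]$ with $x=y$, giving the purely multiplicative recursion $r_{k+1}\ge(1-2y)r_k$ that \emph{does} last $1/y$ rounds with $r_k\ge 2^{-4}$; but that step presumes $G[B_k]$ is itself $(1-y)$-dense, a point the paper does not justify and which your more honest bookkeeping brings to the surface.
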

	\begin{proof}
		Let $k\ge0$ be maximal such that there is a complete blockade $(B_0,B_1,\ldots,B_k)$ in $G$ where $\abs{B_k}\ge(1-2y)^k\abs G$ and $\psi(B_i)\ge y^{2b^2}\psi(G)$ for all $i\in[k]$.
		If $k\ge1/y$ then the second outcome of the lemma holds and we are done; so let us suppose that $k<1/y$.
		Then, since $y\le\frac14$, we have
		\[\abs{B_k}\ge(1-2y)^{k}\abs G
		\ge 4^{-2yk}\abs G\ge 2^{-4}\abs G.\]
		By the choice of $b$, $G[B_k]$ contains either:
		\begin{itemize}
			\item a $(1-y^b)$-dense induced subgraph on at least $y^{b^2}\abs{B_k}$ vertices; or
			
			\item a complete pair $(X,Y)$ in $G[B_k]$ with $\psi(X)\ge y^{b^2}\psi(B_k)$ and $\abs Y\ge(1-2y)\abs{B_k}$.
		\end{itemize}
		
		The first bullet fails since $y^{b^2}\abs{B_k}\ge2^{-4}y^{b^2}\abs{B_k}\ge y^{2b^2}\abs G$;
		thus the second bullet holds.
		Then, since $\psi(X)\ge y^{b^2}\psi(B_k)\ge y^{b^2}\abs{B_k}/\alpha(G)\ge 2^{-4}y^{b^2}\psi(G)\ge y^{2b^2}\psi(G)$
		and $\abs Y\ge(1-2y)\abs{B_k}\ge(1-2y)^{k+1}\abs G$,
		the blockade $(B_0,B_1,\ldots,B_{k-1},X,Y)$ violates the maximality of $k$.
		This proves \cref{lem:p5compl}.
	\end{proof}
	We are now in a position to perform density increment in the dense case, via the following lemma.
	\begin{lemma}
		\label{lem:dense}
		There exists $a\ge4$ such that for every $\eps \in(0,\frac12]$ and $\eta\in(0,\frac14]$, every $(1-\eta)$-dense $P_5$-free graph $G$ with $\abs G\ge2$ contains either:
		\begin{itemize}
			\item a complete pair $(X,Y)$ with $\psi(X)\ge\eps^a\psi(G)$ and $\psi(Y)\ge \frac14\psi(G)$; or
			
			\item a complete blockade $(B_1,\ldots,B_k)$ with $k\ge1/\eps$ and $\psi(B_i)\ge k^{-a}\psi(G)$ for all $i\in[k]$.
		\end{itemize}
	\end{lemma}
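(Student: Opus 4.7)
The plan is to establish \cref{lem:dense} via a density-increment argument: apply \cref{lem:p5coml} once and then iterate \cref{lem:p5compl} to extract either the desired complete pair or complete blockade. I take $a$ to be a sufficiently large multiple of $b^{2}$, say $a = 5b^{2}$.

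First I handle a base case. If $\psi(G) \leq \eps^{-a}$, then since $G$ is $(1-\eta)$-dense with $\eta \leq 1/4$, every vertex has degree at least $(1-\eta)\abs{G}-1$ in $G$, so any $v \in V(G)$ yields a complete pair $(\{v\}, N_{G}(v))$ with $\psi(\{v\}) = 1 \geq \eps^{a}\psi(G)$, and using $\eta \leq 1/4$ together with $\abs{G} \geq 2$ one computes $\psi(N_{G}(v)) \geq (1-\eta)\psi(G) - 1/\alpha(G) \geq \psi(G)/4$. This already gives the first outcome.

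Otherwise $\psi(G) > \eps^{-a}$, so in particular $\abs{G}$ is large. Apply \cref{lem:p5coml} to $G$ with $x = \eta$ and $y = \eps$ (valid since $\eta + \eps \leq 3/4$): either we get a complete pair with $\psi(X) \geq \eps^{b^{2}}\psi(G) \geq \eps^{a}\psi(G)$ and $\abs{Y} \geq \abs{G}/4$, whence $\psi(Y) \geq \psi(G)/4$ (done); or we obtain a $(1-\eps^{b})$-dense subgraph $G_{1}$ with $\psi(G_{1}) \geq \eps^{b^{2}}\psi(G)$. In the latter case, iterate \cref{lem:p5compl} applied to $G_{t-1}$ at each step $t \geq 2$ with $y = \eps^{b^{t-1}}$ (whose $(1-\eps^{b^{t-1}})$-density matches the preceding step). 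A blockade outcome at step $t$ gives $k \geq \eps^{-b^{t-1}} \geq 1/\eps$ and $\psi(B_{i}) \geq \eps^{2b^{t+1}}\psi(G_{t-1})$; tracking $\psi(G_{t-1}) \geq \eps^{b^{2}+2b^{3}(b^{t-2}-1)/(b-1)}\psi(G)$ through the chain, one verifies $\psi(B_{i}) \geq k^{-a}\psi(G)$ holds whenever $a \geq 4b^{2}+b$, independent of $t$. A dense-subgraph outcome produces the next $G_{t}$ and the iteration continues.

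The main obstacle will be the termination analysis. The iteration proceeds only while $\abs{G_{t-1}} \geq \eps^{-2b^{t+1}}$, and when it halts without producing a blockade the residual $G_{T}$ is extremely dense but possibly small. To handle this terminal case, I plan to pair a maximal clique $K \subseteq G_{T}$ (of size at least $\eps^{-b^{T-1}}$ by the density of $G_{T}$) with its common neighborhood $Y$ in the original $G$: the $(1-\eta)$-density of $G$ ensures $\abs{Y} \geq \abs{G}/4$ whenever $\abs{K} \leq 1/(2\eta)$, giving $\psi(Y) \geq \psi(G)/4$; and with the hypothesis $\psi(G) > \eps^{-a}$ together with the accumulated size bounds on $\abs{G_{T}}$, one should be able to verify $\psi(K) = \abs{K} \geq \eps^{a}\psi(G)$. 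Making this precise across all parameter ranges of $\eps$, $\eta$, and $\psi(G)$ is the most delicate step of the argument.
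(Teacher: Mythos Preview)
Your setup is essentially the paper's: the base case $\psi(G)\le\eps^{-a}$, the first application of \cref{lem:p5coml}, and the subsequent appeal to \cref{lem:p5compl} are all the same. The difference is that you iterate \cref{lem:p5compl} explicitly, whereas the paper packages this into a minimality argument. Your bookkeeping for the blockade outcome along the iteration is fine.

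The gap is the terminal case. Each step of your iteration only guarantees $\abs{G_t}\ge\eps^{2b^{t+1}}\abs{G_{t-1}}$, while the size hypothesis for step~$t$ is $\abs{G_{t-1}}\ge\eps^{-2b^{t+1}}$; combining these gives only $\abs{G_T}\ge1$ in the worst case. So $G_T$ could be a single vertex, and then your ``maximal clique $K$'' is a singleton. Since you are in the regime $\psi(G)>\eps^{-a}$, this gives $\psi(K)=1<\eps^a\psi(G)$, so the complete-pair outcome fails; and $\abs K=1<1/\eps$, so the blockade outcome fails too. Even ignoring this, your plan to pair $K$ with its common neighbourhood in $G$ cannot work uniformly: for $\eta=\tfrac14$ the constraint $\abs K\le1/(2\eta)=2$ is incompatible with $\abs K\ge\eps^a\psi(G)>1$ unless $\psi(G)\le2\eps^{-a}$, which you have not arranged. (Also, the density of $G_T$ is $1-\eps^{b^T}$, which gives a clique of order $\eps^{-b^T}$, not $\eps^{-b^{T-1}}$ as you wrote.)

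The paper's fix is to introduce the threshold $\xi:=\psi(G)^{-1/a}$ and, instead of iterating, to take $y\in[\xi^{2b},\eps^b]$ \emph{minimal} such that $G$ has a $(1-y)$-dense induced subgraph $F$ with $\abs F\ge y^{3b}\abs G$. If $y>\xi^2$, one application of \cref{lem:p5compl} must give the blockade (the dense-subgraph outcome would violate minimality). If $y\le\xi^2$, then because $y\ge\xi^{2b}$ one has $\abs F\ge\xi^{6b^2}\abs G\ge\xi^{6b^2-a}$, and with $a=4b^3+6b^2$ this forces $\omega(F)\ge\tfrac12\xi^{-2}>\xi^{-1}=\psi(G)^{1/a}$; the resulting clique of size $k>\psi(G)^{1/a}>1/\eps$ gives the blockade directly, since each singleton has $\psi=1\ge k^{-a}\psi(G)$. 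Note this forces $a$ to be cubic in $b$, not quadratic, so $a=5b^2$ is too small in any case.
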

	\begin{proof}
		Let $b\ge4$ be given by \cref{lem:p5coml};
		we claim that $a:=4b^3+6b^2$ suffices.
		To this end, let $\xi:=\psi(G)^{-1/a}$.
		We may assume $\eps>\xi$, for otherwise the first outcome of the lemma holds (with $\abs X=1$) since $G$ is $(1-\eta)$-dense and $\abs G\ge2$.
		Hence $\abs G\ge \psi(G)=\xi^{-a}>\eps^{-b^2}$ by the choice of $a$.
		Thus, by \cref{lem:p5coml} with $x=\eta$ and $y=\eps$, $G$ contains either:
		\begin{itemize}
			\item a $(1-\eps^b)$-dense induced subgraph with at least $\eps^{b^2}\abs G$ vertices; or
			
			\item a complete pair $(X,Y)$ with $\psi(X)\ge \eps^{b^2}\psi(G)\ge \eps^a\psi(G)$ and $\abs Y\ge (1-\eta-\eps)\abs G\ge \frac14\abs G$.
		\end{itemize}
		
		If the second bullet holds then the first outcome of the lemma holds and we are done.
		Thus, we may assume that the first bullet holds.
		Therefore, there exists $y\in[\xi^{2b},\eps^b]$ minimal such that $G$ has a $(1-y)$-dense induced subgraph $F$ with $\abs F\ge y^{3b}\abs G$.
		If $y\le \xi^{2}$, then since $\abs F\ge y^{3b}\abs G\ge \xi^{6b^2-a}>\xi^{-2}$ by the choice of $a$, we have $\omega(F)\ge\frac12\xi^{-2}>\xi^{-1}>\eps^{-1}$; and so the second outcome of the lemma holds with $k=\omega(F)$.
		Hence, we may assume $y>\xi^{2}$.
		Now, because $\abs F\ge y^{3b}\abs G\ge y^{3b}\xi^{-a}\ge y^{3b-a/(2b)}= y^{-2b^2}$,
		\cref{lem:p5compl} implies that $F$ contains either:
		\begin{itemize}
			\item a $(1-y^b)$-dense induced subgraph on at least $y^{2b^2}\abs F$ vertices; or
			
			\item a complete blockade $(B_1,\ldots,B_k)$ in $F$ with $k\ge1/y\ge1/\eps$ and $\psi(B_i)\ge y^{2b^2}\psi(F)$ for all $i\in[k]$.
		\end{itemize}
		
		Since $y^b>\xi^{2b}$ and $y^{2b^2}\abs F\ge y^{2b^2+3b}\abs G\ge y^{3b^2}\abs G$, the first bullet cannot hold by the minimality of $y$.
		Thus the second bullet holds;
		and so the second outcome of the lemma holds since $y^{2b^2}\psi(F)\ge y^{2b^2+3b}\psi(G)\ge y^a\psi(G)\ge k^{-a}\psi(G)$.
		This proves \cref{lem:dense}.
	\end{proof}
	It now suffices to remove the dense hypothesis by means of R\"odl's theorem \ref{thm:rodl} to prove \cref{lem:incre}, which we restate here for convenience:
	\begin{lemma}
		\label{lem:p5}
		There exists $b\ge4$ such that for every $\eps\in(0,\frac12]$, every $P_5$-free graph $G$ with $\abs G\ge2$ contains either:
		\begin{itemize}
			\item an anticomplete pair $(A,B)$ with $\abs A,\abs B\ge2^{-b}\abs G$;
			
			\item a complete pair $(X,Y)$ with $\psi(X)\ge \eps^b\psi(G)$ and $\psi(Y)\ge2^{-b}\psi(G)$; or
			
			\item a complete blockade $(B_1,\ldots,B_k)$ in $G$ with $k\ge1/\eps$ and $\psi(B_i)\ge k^{-b}\psi(G)$ for all $i\in[k]$.
		\end{itemize}
	\end{lemma}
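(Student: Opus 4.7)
My plan is to derive \cref{lem:p5} from \cref{lem:dense} by removing the denseness hypothesis via R\"odl's theorem, exactly as suggested in the text immediately preceding the statement. Let $\eta\in(0,\tfrac14]$ be the constant from \cref{thm:p5sparse} and let $a\ge 4$ be the constant from \cref{lem:dense}. I would apply \cref{thm:rodl} to $H=P_5$ with parameter $\eps_0:=\min(\eta,\tfrac14)$ to obtain a universal $\delta\in(0,1)$ such that every $P_5$-free graph $G$ contains an $\eps_0$-restricted induced subgraph $F$ with $\abs F\ge\delta\abs G$. I fix $b$ only at the end, large enough to absorb $\delta$, $\eta$, and $a$ uniformly in $\eps$.

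Fix $\eps\in(0,\tfrac12]$ and a $P_5$-free $G$ with $\abs G\ge 2$. When $\abs G$ is below some threshold depending only on $\delta$, the statement is handled by inspection: if $G$ has an edge then a pair of its endpoints is a complete pair witnessing the second bullet (since $\psi$ of a singleton equals $1$ while $\psi(G)\le\abs G$ is bounded), and otherwise $G$ is edgeless so any balanced partition of $V(G)$ yields an anticomplete pair witnessing the first bullet. For larger $G$ we have $\abs F\ge 2$, and the plan is to case-split on whether $F$ is $\eps_0$-sparse or $(1-\eps_0)$-dense.

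If $F$ is $\eps_0$-sparse then, since $\eps_0\le\eta$, it is also $\eta$-sparse; so \cref{thm:p5sparse} delivers an anticomplete pair $(A,B)$ in $F$ with $\abs A,\abs B\ge\eta\abs F\ge\eta\delta\abs G$, and this yields the first bullet once $2^{-b}\le\eta\delta$. If instead $F$ is $(1-\eps_0)$-dense, I apply \cref{lem:dense} to $F$ with its clique parameter set to $\eps$ and its denseness parameter set to $\eps_0\le\tfrac14$. The two outcomes of \cref{lem:dense} transfer directly to the second and third bullets of \cref{lem:p5}, using the crude but sufficient estimate $\psi(F)\ge\delta\psi(G)$ (which follows from $\abs F\ge\delta\abs G$ and $\alpha(F)\le\alpha(G)$) to convert the bounds $\psi(X)\ge\eps^a\psi(F)$, $\psi(Y)\ge\tfrac14\psi(F)$, and $\psi(B_i)\ge k^{-a}\psi(F)$ into their $\psi(G)$ counterparts.

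All that remains is to choose $b$ large enough---for instance $b:=\max\{4,\,a+\lceil\log(1/\delta)\rceil,\,\lceil\log(4/(\eta\delta))\rceil\}$---so that the multiplicative losses of $\delta$, $\eta$, and $\tfrac14$ are absorbed into the exponents $\eps^b$, $2^{-b}$, and $k^{-b}$ uniformly in $\eps\in(0,\tfrac12]$. The key uniform-in-$\eps$ inequality $\eps^{b-a}\le\delta$ holds because $\eps\le\tfrac12$, so its maximum over the range is $2^{-(b-a)}\le\delta$. This bookkeeping is the only obstacle, and it is entirely routine; no further combinatorial input on $P_5$-free graphs beyond what is already assembled in \cref{thm:rodl,thm:p5sparse,lem:dense} is needed.
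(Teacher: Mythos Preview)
Your proposal is correct and follows essentially the same route as the paper's proof: apply R\"odl's theorem with parameter $\eta$ to get a restricted induced subgraph $F$, invoke \cref{thm:p5sparse} in the sparse case and \cref{lem:dense} in the dense case, and absorb the $\delta$ loss into the exponent $b$ via $\eps\le\tfrac12$ and $k\ge2$. The only cosmetic difference is that you treat the regime $\abs G<2/\delta$ separately by hand, whereas the paper tacitly ignores it; this extra care is harmless and arguably tidier.
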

	\begin{proof}
		Let $\eta\in(0,\frac14]$ be given by \cref{thm:p5sparse}, 
		let $\delta>0$ be given by \cref{thm:rodl} with $\eps=\eta$, and let $a\ge4$ be given by \cref{lem:dense}.
		We claim that $b:=a+\log\delta^{-1}$ suffices.
		To see this, the choice of $\delta$ gives an $\eta$-restricted induced subgraph of $G$ with $\abs J\ge \delta\abs G$.
		If $J$ is $\eta$-sparse then it contains an anticomplete pair $(A,B)$ with $\abs A,\abs B\ge \eta\abs J\ge \eta\delta\abs G\ge 2^{-b}\abs G$ by the choice of $b$; but this satisfies the first outcome of the lemma, a contradiction.
		So we may assume $J$ is $(1-\eta)$-dense.
		By the choice of $a$, $J$ contains either:
		\begin{itemize}
			\item a complete pair $(X,Y)$ with $\psi(X)\ge \eps^a\psi(J)$ and $\psi(Y)\ge\frac14\psi(J)$; or
			
			\item a complete blockade $(B_1,\ldots,B_k)$ with $k\ge1/\eps$ and $\psi(B_i)\ge k^{-a}\psi(J)$ for all $i\in[k]$.
		\end{itemize}
		
		If the first bullet holds then the second outcome of the lemma holds since $\eps^a\psi(J)\ge \eps^{a}\delta\cdot\psi(G)\ge \eps^{a+\log\delta^{-1}}\psi(G)\ge \eps^{b}\psi(G)$
		and $\frac14\psi(J)\ge\frac14\delta\cdot\psi(G)\ge \eta\delta\cdot\psi(G)\ge 2^{-b}\psi(G)$ by the choice of $b$;
		and if the second bullet holds then the third outcome of the lemma holds since $k^{-a}\psi(J)\ge k^{-a-\log\delta^{-1}}\psi(G)\ge k^{-b}\psi(G)$.
		This proves \cref{lem:p5}.
	\end{proof}
	
	\section{Obtaining increasingly high-$\rho$ anticomplete pairs}
	\label{sec:anti}
	
	In this section we handle the locally sparse case and eventually complete the proof of \cref{thm:main}.
	Before carrying out, we would like to recall the following simple properties of $\rho$ for every graph $G$:
	\begin{itemize}
		\item $\rho(\{v\})=1$ for all $v\in V(G)$; and
		
		\item $\max(\rho(A),\rho(B))\le\rho(A\cup B)\le\rho(A)+\rho(B)$ for all $A,B\subset V(G)$, and equality holds for the left-hand side inequality if $A$ is anticomplete to $B$ in $G$.
	\end{itemize} 
	
	As mentioned in \cref{sec:sketch}, we require the following corollary of the \dd comb\ee{} lemma~\cite[Lemma 2.1]{MR4563865}.
	\begin{lemma}
		[Chudnovsky--Scott--Seymour--Spirkl]
		\label{lem:comb}
		Let $G$ be a graph with nonempty and disjoint $A,B\subset V(G)$, such that each vertex in $A$ has at most $\Delta>0$ neighbours in $B$ and every vertex in $B$ has a neighbour in $A$. Then for every $\Gamma>0$, either:
		\begin{itemize}
			\item $\abs B<20\sqrt{\Gamma\Delta}$; or
			
			\item for some integer $k\ge1$, there are $k$ vertices $a_1,\ldots,a_k$ in $A$ and $k$ disjoint subsets $B_1,\ldots,B_k$ of $B$, such that for every $i\in[k]$, $\abs{B_i}\ge\Gamma/k^2$, and $a_i$ is adjacent in $G$ to every vertex in $B_i$ and to no vertex in $B_j$ for all $j\in[k]\setminus\{i\}$.
		\end{itemize}
	\end{lemma}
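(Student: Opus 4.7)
The plan is to build the comb greedily. Assume $\abs B\ge 20\sqrt{\Gamma\Delta}$, otherwise the first alternative holds. Set $k:=\lceil\sqrt{\Gamma/\Delta}\,\rceil$, chosen so that the target bristle size $\Gamma/k^2$ does not exceed $\Delta$ and can therefore be accommodated inside the $B$-neighbourhood of a single vertex of $A$. Maintain residual sets $A^{(0)}:=A$ and $B^{(0)}:=B$, and at step $i+1$ pick $a_{i+1}\in A^{(i)}$ with $\abs{N(a_{i+1})\cap B^{(i)}}$ maximum, let $B_{i+1}$ be any $\lceil\Gamma/k^2\rceil$-subset of $N(a_{i+1})\cap B^{(i)}$, and update
\begin{equation*}
B^{(i+1)}:=B^{(i)}\setminus N(a_{i+1}),\qquad A^{(i+1)}:=A^{(i)}\setminus N(B_{i+1}).
\end{equation*}
These rules preserve both comb conditions at every step: for $j>i$ we have $B_j\subseteq B\setminus N(a_i)$, so $a_i$ has no neighbour in $B_j$, and $a_j\in A\setminus N(B_i)$, so $a_j$ has no neighbour in $B_i$.

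The main task is to show the process runs for $k$ rounds, i.e., that for every $i<k$ some $a\in A^{(i)}$ has $\abs{N(a)\cap B^{(i)}}\ge\Gamma/k^2$. The easy half is control of $B^{(i)}$: each round removes at most $\Delta$ vertices from $B$, so the total loss is at most $k\Delta\le\sqrt{\Gamma\Delta}\le\abs B/20$, and $B^{(i)}$ retains $\ge 19\abs B/20$ vertices throughout, each with a neighbour in $A$. The subtle half is to rule out the failure scenario in which $A\setminus A^{(i)}=\bigcup_{j\le i}N(B_j)$ has already absorbed every $A$-vertex with many remaining $B^{(i)}$-neighbours: since vertices of $B$ may a priori have unbounded degree to $A$, even a single bristle $B_j$ could in principle block arbitrarily many candidates.

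I expect this last point to be the main obstacle. To overcome it, I would first pass to a minimal dominating subset $A_0\subseteq A$ of $B$, which forces every $a\in A_0$ to have a private $B$-neighbour and hence $\abs{A_0}\le\abs B$; and then refine the greedy choice by picking $(a_{i+1},B_{i+1})$ so as simultaneously to maximise $\abs{N(a_{i+1})\cap B^{(i)}}$ and to minimise the blocking set $\abs{N(B_{i+1})\cap A_0}$. An averaging argument over random $\lceil\Gamma/k^2\rceil$-subsets of $N(a_{i+1})\cap B^{(i)}$, together with a double-count of the bipartite edges from $A_0\setminus A^{(i)}$ to $B^{(i)}$, should convert the hypothesis $\abs B\ge 20\sqrt{\Gamma\Delta}$ into the desired inductive guarantee that the greedy step succeeds, with the constant $20$ providing just enough slack to absorb the overhead of this accounting. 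This final step is where I expect essentially all of the technical difficulty to concentrate.
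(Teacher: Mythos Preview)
The paper does not actually prove this lemma; it is quoted as \cite[Lemma~2.1]{MR4563865} and used as a black box, so there is no in-paper proof to compare against. Your ingredients (pass to a minimal dominating subset $A_0\subseteq A$ so that $\abs{A_0}\le\abs B$, then run a greedy process ordered by residual $B$-degree) are indeed those of the original Chudnovsky--Scott--Seymour--Spirkl argument.

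However, your plan has a concrete problem, and it sits exactly at the step you flag as unfinished. Fixing $k=\lceil\sqrt{\Gamma/\Delta}\,\rceil$ in advance forces bristle size $\Gamma/k^2\approx\Delta$, so each $a_i$ must donate essentially its \emph{entire} $B$-neighbourhood, leaving no freedom for the ``averaging over random $\lceil\Gamma/k^2\rceil$-subsets'' you propose. Worse, if the maximum $B$-degree in $A_0$ happens to be, say, $\Delta/2$ (nothing in the hypotheses forbids this), your greedy step already fails at $i=0$. The lemma only promises \emph{some} $k\ge1$, and in the original proof $k$ is not fixed a priori: one first runs the greedy on $A_0$ to produce a non-increasing sequence $d_1\ge d_2\ge\cdots$ with $\sum_i d_i=\abs B$ and $d_1\le\Delta$, and then selects $k$ adaptively from the shape of this sequence. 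The comb is extracted from a suitable block of indices, with the mutual overlaps controlled via the telescoping identity $\sum_{j<i}\abs{C_j\cap N(a_i)}=\abs{N(a_i)\cap B}-d_i\le\Delta-d_i$, rather than by deleting $N(B_{i+1})$ from $A$ at each step. As written, what you have is a research outline that correctly locates the difficulty but does not resolve it.
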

	
	For $q\ge p>0$, we say that $G$ is {\em $(p,q)$-sparse} if for every induced subgraph $F$ of $G$ with $\rho(F)\ge q$,
	there is an anticomplete pair $(X,Y)$ in $F$ with $\rho(X),\rho(Y)\ge p$.
	The following lemma investigates $(p,q)$-sparse $P_5$-free graphs $G$ with $q$ not too large compared to $\rho(G)$.
	\begin{lemma}
		\label{lem:anti}
		Let $\eps\in(0,\frac12]$, let $G$ be a $P_5$-free graph, and let $0<p\le q\le(1-\eps^2)\rho(G)$. 
		If $G$ is $(p,q)$-sparse, then it contains either:
		\begin{itemize}
			\item an anticomplete pair $(A,B)$ with $\rho(A)\ge q-2\eps^8\rho(G)$ and $\rho(B)\ge(1-\eps^2)\rho(G)$;
			
			\item a complete pair $(X,Y)$ with $\rho(X)\ge \eps^8\rho(G)$ and $\rho(Y)\ge p$; or
			
			\item a complete blockade $(B_1,\ldots,B_k)$ with $k\ge1/\eps$ and $\rho(B_i)\ge k^{-8}\rho(G)$ for all $i\in[k]$.
		\end{itemize}
	\end{lemma}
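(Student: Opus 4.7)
I would prove Lemma \ref{lem:anti} by assuming the second and third outcomes fail and deriving the first. The plan is to choose $A\subseteq V(G)$ with the following extremal property: $\rho(A)$ is maximum subject to the condition that $B:=\{v\in V(G)\setminus A:v\text{ has no neighbour in }A\}$ satisfies $\rho(B)\ge(1-\eps^2)\rho(G)$. The witness $A=\emptyset$, $B=V(G)$ ensures existence, and $(A,B)$ is anticomplete in $G$ by construction; the goal reduces to showing $\rho(A)\ge q-2\eps^8\rho(G)$.

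Suppose for contradiction that $\rho(A)<q-2\eps^8\rho(G)$. Since $\rho(B)\ge(1-\eps^2)\rho(G)\ge q$, applying the $(p,q)$-sparseness of $G$ to $G[B]$ produces an anticomplete pair $(X,Y)\subseteq B$ with $\rho(X),\rho(Y)\ge p$. Write $B_X$ for the set of vertices of $B$ that are neither in $X$ nor adjacent to any vertex of $X$; then $(A\cup X,B_X)$ is an anticomplete pair in $G$ (and similarly for $Y$). The anticomplete additivity formula gives $\rho(A\cup X)=\max(\rho(A),\rho(X))$ since $X\subseteq B$ is anticomplete to $A$, while subadditivity yields $\rho(B_X)\ge\rho(B)-\rho(B\setminus B_X)$, where $B\setminus B_X$ consists of $X$ together with its neighbours in $B$.

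The argument then splits on the sizes of $\rho(B\setminus B_X)$ and $\rho(B\setminus B_Y)$. If both are below a threshold on the order of $\eps^8\rho(G)$, I would extend $A$ by whichever of $X,Y$ has larger $\rho$; strengthening the extremal invariant with a safety margin of $\eps^8\rho(G)$, any strict increase in $\rho(A)$ contradicts maximality, and iterating the extension eventually pushes $\rho(A)$ to $q-2\eps^8\rho(G)$. Otherwise, WLOG $\rho(B\setminus B_X)\ge\eps^8\rho(G)$, and applying \cref{lem:comb} to the bipartite structure between $X$ and $B\setminus(B_X\cup X)$---after converting $\rho$-bounds to size-bounds via suitable extremal subsets---either produces a near-complete pair realising the second outcome, or a comb with $k\ge 1/\eps$ teeth which, by $P_5$-freeness (ruling out edges between distinct teeth), upgrades to a complete blockade realising the third outcome. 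Either case contradicts the standing hypotheses.

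The main obstacle I foresee lies in the accounting: the anticomplete formula $\rho(A\cup X)=\max(\rho(A),\rho(X))$ means that extending $A$ by $X$ may fail to strictly increase $\rho(A)$, so reaching a contradiction with maximality may require further recursion---e.g.\ applying $(p,q)$-sparseness inside the larger of $X,Y$---until strict growth is forced. Carefully tracking the $\eps^8$-loss across these iterations so that the final bound lands exactly at $q-2\eps^8\rho(G)$ is the most delicate part. A secondary subtlety is the passage from the size-based comb lemma to the $\rho$-based exponents $\eps^8$ and $k^{-8}$ in the second and third outcomes, which depends on $P_5$-freeness to prevent edges between distinct comb teeth from spoiling the complete structure.
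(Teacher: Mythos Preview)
Your extremal setup has a genuine gap. You choose $A$ to maximise $\rho(A)$ subject to $\rho(B)\ge(1-\eps^2)\rho(G)$, and then try to enlarge $A$ by some $X\subset B$ produced by $(p,q)$-sparseness. But since $A$ and $X$ are anticomplete, $\rho(A\cup X)=\max(\rho(A),\rho(X))$, and the hypothesis only yields $\rho(X)\ge p$; once $\rho(A)\ge p$ you obtain no increase at all. Your proposed fix---``applying $(p,q)$-sparseness inside the larger of $X,Y$''---does not work either, because $(p,q)$-sparseness only applies to subgraphs with $\rho\ge q$, whereas $X,Y$ are only guaranteed $\rho\ge p$. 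So the iteration stalls and cannot push $\rho(A)$ anywhere near $q-2\eps^8\rho(G)$.

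There is a second gap in your comb step. To upgrade a comb to a complete blockade you need that any two teeth are complete, and this requires a connected ``hub'' through which one can route a path between representatives of distinct teeth, so that a nonedge between teeth yields an induced $P_5$. In your bipartite picture between $X$ and its neighbourhood in $B$ there is no such hub.

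The paper's argument supplies exactly these missing structural ingredients. It works in a connected $G$ with $\psi(G)=\rho(G)$ and first shows (\cref{claim:sparse}) that in any connected $F$ with $\rho(F)\ge q$, choosing an anticomplete pair $(A,B)$ with $\rho(A)$ maximal forces the minimal cutset $S$ separating them to satisfy $\rho(S)\le2\eps^8\rho(G)$ (each $v\in S$ is complete to $A$ or to $B$ by $P_5$-freeness, so the failed second outcome bounds $\rho(S)$); hence $\rho(A)\ge q-2\eps^8\rho(G)$. It then builds a \emph{maximal} partition $(A,D,B_1,\ldots,B_k,E)$ where $D$ is a cutset, each $G[B_i]$ is a connected island with $\rho(B_i)\ge p$, and $\rho(E)<p$. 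Maximality of $k$ forces $\rho(A)<q$. The crucial step is bounding the set $S\subset D$ of vertices with a neighbour in $A$: each $v\in S$ mixed on $A$ is complete to some $B_i$, so the comb lemma is applied with one vertex $b_i$ from each $B_i$ on one side and (a witness for) $S$ on the other; the resulting teeth lie in $S$, and any nonedge between teeth can be extended by a $b_i\text{--}b_j$ path \emph{through the connected hub $A$} to produce an induced $P_5$. This yields either the third outcome or $\rho(S)\le\eps^2\rho(G)$, whence $(A,(D\setminus S)\cup B_1\cup\cdots\cup B_k\cup E)$ is the desired anticomplete pair. The collection $\{B_i\}$ and the connected hub $A$ are precisely what your extremal approach lacks.
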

	(In application we will let $p$ vary and always fix $q=(1-\eps^2)\rho(G)$, but for better clarity we decided to make $q$ an independent variable in this lemma.)
	\begin{proof}
		The proof makes use of high-$\rho$ anticomplete pairs to decompose $G$ in a similar way to \dd terminal partitions\ee{} developed in~\cite[Section 4]{2024p5}.
		We may assume that $G$ is connected and $\psi(G)=\rho(G)$.
		We say that $S\subset V(G)$ {\em separates} nonempty and disjoint $B_1,\ldots,B_k\subset V(G)\setminus S$ in $G$ if $G\setminus S$ has no path between $B_i$  and $B_j$ for all distinct $i,j\in[k]$; and $S$ is a {\em cutset} of $G$ if $k\ge2$.
		
		For the proof, assume that the last two outcomes do not hold.
		We begin with:
		\begin{claim}
			\label{claim:sparse}
			Every connected induced subgraph $F$ of $G$ with $\rho(F)\ge q$ contains a minimal nonempty cutset separating two vertex subsets $A,B$ with $\rho(A)\ge \max(p,q-2\eps^8\rho(G))$ and $\rho(B)\ge p$.  
		\end{claim}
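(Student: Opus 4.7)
The plan is to apply the $(p,q)$-sparseness of $G$ to $F$ (valid since $\rho(F)\ge q$) to obtain disjoint $X_0,Y_0\subseteq V(F)$ that are anticomplete in $F$ with $\rho(X_0),\rho(Y_0)\ge p$. Since $F$ is connected, I would fix a minimal (by inclusion) nonempty cutset $S\subseteq V(F)$ separating $X_0$ from $Y_0$ in $F$, let $F_A$ be the union of components of $F\setminus S$ meeting $X_0$, and set $F_B:=V(F)\setminus(S\cup F_A)\supseteq Y_0$. Then $V(F)=F_A\cup S\cup F_B$ disjointly, $(F_A,F_B)$ is anticomplete in $F$, $\rho(F_A),\rho(F_B)\ge p$ by monotonicity, and any proper subset of $S$ that still separated $F_A$ from $F_B$ would in particular separate $X_0$ from $Y_0$, so $S$ is also a minimal separator of $F_A,F_B$ in $F$. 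The anticomplete identity $\rho(F_A\cup F_B)=\max(\rho(F_A),\rho(F_B))$ and subadditivity yield
\[
q\le\rho(F)\le\max(\rho(F_A),\rho(F_B))+\rho(S),
\]
so once $\rho(S)\le 2\eps^8\rho(G)$ is established, the side with larger Hall ratio (say $F_A$) satisfies $\rho(F_A)\ge\max(p,q-2\eps^8\rho(G))$, and $(A,B):=(F_A,F_B)$ verifies the claim.

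To bound $\rho(S)$, I would exploit the $P_5$-freeness. By minimality each $s\in S$ has a neighbor in each of $F_A,F_B$; if for some $s$ neither $N(s)\cap F_A$ were a union of components of $F[F_A]$ nor $N(s)\cap F_B$ a union of components of $F[F_B]$, then picking adjacent $a\sim a'$ in a component of $F[F_A]$ with $a\in N(s),\,a'\notin N(s)$ and similarly $b\sim b'$ in $F_B$ would produce the induced $P_5$ path $a'{-}a{-}s{-}b{-}b'$ (the required non-edges among $\{a,a'\}$ and $\{b,b'\}$ follow from $F_A$ being anticomplete to $F_B$). Thus $S=S_A\cup S_B$ where $S_A,S_B$ denote the corresponding subsets, and by subadditivity it suffices to prove $\rho(S_A)\le\eps^8\rho(G)$ (the $S_B$ case is symmetric). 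For each component $C$ of $F[F_A]$ with $\rho(C)\ge p$, the set $T_C:=\{s\in S_A:s\text{ complete to }C\}$ together with $C$ forms a complete pair in $F$, so the no-complete-pair hypothesis forces $\rho(T_C)<\eps^8\rho(G)$.

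The hard part will be lifting this single-component bound to $\rho(S_A)\le\eps^8\rho(G)$: since $S_A=\bigcup_C T_C$ ranges over \emph{all} components $C$ of $F[F_A]$, different $s\in S_A$ may be complete to entirely different collections of components, and components with $\rho<p$ give no a priori bound on their $T_C$. My plan here is to apply the comb lemma (\cref{lem:comb}) to the block-structured bipartite adjacency between $S_A$ and $F_A$---each $s\in S_A$ is complete-or-anticomplete to each component of $F[F_A]$---with suitably chosen parameters, aiming either to produce some component $C$ with $\rho(C)\ge p$ and $\rho(T_C)\ge\eps^8\rho(G)$ (contradicting the no-complete-pair assumption), or to produce $k\ge 1/\eps$ distinct components with corresponding subsets of $S_A$ that, via the $P_5$-free restrictions on their $F_B$-neighborhoods forcing most cross-pairs to be complete in $F$, assemble into a complete blockade $(B_1,\ldots,B_k)$ with $\rho(B_i)\ge k^{-8}\rho(G)$ (contradicting the no-complete-blockade assumption). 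The technical heart is choosing these comb-lemma parameters and arranging the bookkeeping so that one of these two contradictions is always produced whenever $\rho(S_A)>\eps^8\rho(G)$.
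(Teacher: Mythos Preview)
Your setup and the identification of ``the hard part'' are accurate, but the comb-lemma plan you sketch to bound $\rho(S_A)$ does not close, and the paper sidesteps this entire difficulty with one idea you are missing.

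The paper does not take $F_A$ versus $F_B=$ ``everything else''. Instead it first replaces the raw pair $(X_0,Y_0)$ by an anticomplete pair $(A,B)$ with $F[A],F[B]$ both \emph{connected} and $\rho(A)\ge\rho(B)\ge p$, and among all such pairs chooses one with $\rho(A)$ \emph{maximal}. With both sides connected, each $s$ in the minimal cutset $S$ has a neighbour in $A$ and in $B$, and $P_5$-freeness gives the clean dichotomy that $s$ is complete to $A$ or complete to $B$ --- not merely to a union of components. Hence $S=S_A\cup S_B$ with $S_A$ complete to $A$ and $S_B$ complete to $B$, and the failure of the complete-pair outcome instantly yields $\rho(S_A),\rho(S_B)<\eps^8\rho(G)$, so $\rho(S)\le 2\eps^8\rho(G)$. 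Finally, the maximality of $\rho(A)$ forces every component of $F\setminus S$ to have Hall ratio at most $\rho(A)$, whence $\rho(A)=\rho(F\setminus S)\ge\rho(F)-\rho(S)\ge q-2\eps^8\rho(G)$. No comb lemma is needed here; it enters only later in \cref{claim:smalls}, where the terminal partition has been engineered so that every block $B_i$ already has $\rho(B_i)\ge p$.

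As for why your plan stalls: to invoke \cref{lem:comb} with component-representatives of $F[F_A]$ in the role of ``$A$'' and (a witness set inside) $S_A$ in the role of ``$B$'', you need a uniform bound $\Delta$ on $\abs{T_C}$. You only control $\rho(T_C)$ --- and hence $\abs{T_C}$ relative to a fixed $\psi$-witness --- for components $C$ with $\rho(C)\ge p$; vertices of $S_A$ complete only to low-$\rho$ components may carry most of $\rho(S_A)$ and give no usable $\Delta$. Even if the comb produced $X_1,\ldots,X_\ell\subseteq S_A$ attached to distinct components $C_1,\ldots,C_\ell$ of $F_A$, your assertion that cross-pairs $(X_i,X_j)$ are complete does not follow: in \cref{claim:smalls} completeness is forced by routing an induced path between the two teeth through the single connected set $A$, but here the teeth $c_i\in C_i$ lie in pairwise anticomplete components and there is no connected conduit on the $F_B$ side either, so no induced $P_5$ is guaranteed when $u\in X_i$ and $v\in X_j$ are nonadjacent. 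The fix is not more bookkeeping; it is the extremal choice of a connected pair described above.
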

		\begin{subproof}
			Since $G$ is $(p,q)$-sparse, $F$ contains an anticomplete pair $(A,B)$ with $\rho(A)\ge\rho(B)\ge p$; and we may assume that $F[A],F[B]$ are connected.
			Among all such anticomplete pairs $(A,B)$, choose one such that $\rho(A)$ is as large as possible.
			Since $F$ is connected, there is a minimal nonempty cutset $S$ separating $A,B$ in $F$.
			Thus, because $F$ is $P_5$-free, every vertex in $S$ is complete to $A$ or to $B$ in $F$.
			Since the second outcome of the lemma does not hold, it follows that $\rho(S)\le 2\eps^8\rho(G)$; and so the maximality of $\rho(A)$ implies that
			$\rho(A)=\rho(F\setminus S)\ge \rho(F)-2\eps^8\rho(G)\ge q-2\eps^8\rho(G)$.
			This proves \cref{claim:sparse}.
		\end{subproof}
		
		By \cref{claim:sparse} with $F=G$, $G$ has a minimal nonempty cutset $S$ separating two vertex subsets $A,B$ of $G$ with $\rho(B)\ge p$ and
		\[\rho(A)\ge(1-2\eps^8)\rho(G)\ge \max(p,q-2\eps^8\rho(G)).\]
		
		Therefore, there exists $k\ge1$ maximal such that there is a partition $(A,D,B_1,\ldots,B_k,E)$ of $V(G)$ into nonempty subsets satisfying:
		\begin{itemize}
			\item $D$ is a cutset separating $A,B_1,\ldots,B_k,E$ in $G$;
			
			\item $G[A]$ and $G[B_1],\ldots,G[B_k]$ are connected;
			
			\item every vertex in $D$ has a neighbour in $B_1\cup\cdots\cup B_k$;
			
			\item $\rho(A)\ge \max(p,q-2\eps^8\rho(G))$, $\rho(E)<p$, and $\rho(B_i)\ge p$ for all $i\in[k]$.
		\end{itemize}
		(When $k=1$ one can take $D=S$ and $B_1=B$.)
		The main consequence of the maximality of $k$ is that $\rho(A)$ cannot be too large:
		\begin{claim}
			\label{claim:smalla}
			$\rho(A)<q$.
		\end{claim}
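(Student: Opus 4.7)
The plan is to prove \cref{claim:smalla} by contradiction: assuming $\rho(A)\ge q$, I would construct a refinement of the partition $(A,D,B_1,\ldots,B_k,E)$ with one more ``$B$''-part, violating the maximality of $k$. The key leverage is that $G[A]$ is connected and has Hall ratio at least $q$, so \cref{claim:sparse} applies to $F=G[A]$ and yields a minimal nonempty cutset $S$ of $G[A]$ together with subsets $A',B'\subset A\setminus S$ such that $\rho(A')\ge\max(p,q-2\eps^8\rho(G))$ and $\rho(B')\ge p$, with $A',B'$ lying in distinct components of $G[A]\setminus S$.

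Next, I would list the connected components of $G[A]\setminus S$ as $C_1,C_2,\ldots,C_m$, chosen so that $A'\subset C_1$ and $B'\subset C_2$, and define the refined partition by
\[A_{new}:=C_1,\quad D_{new}:=D\cup S,\quad B_{k+1}^{new}:=C_2,\quad E_{new}:=E\cup\bigcup_{3\le i\le m}C_i,\]
keeping $B_1,\ldots,B_k$ as they were. The connectedness of $C_1$ and $C_2$ is immediate; we have $\rho(C_1)\ge\rho(A')\ge\max(p,q-2\eps^8\rho(G))$ and $\rho(C_2)\ge\rho(B')\ge p$; and since the small-$\rho$ components $C_3,\ldots,C_m$ are pairwise anticomplete and all anticomplete to $E$ (because $D$ separates $A$ from $E$, so there is no edge between $A$ and $E$), the second bullet in the listed properties of $\rho$ gives $\rho(E_{new})=\max(\rho(E),\max_{i\ge3}\rho(C_i))<p$.

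The remaining verifications are that $D_{new}$ separates $A_{new},B_1,\ldots,B_k,B_{k+1}^{new},E_{new}$ in $G$ and that every vertex of $D_{new}$ has a neighbour in $B_1\cup\cdots\cup B_k\cup B_{k+1}^{new}$. Separation follows by combining the facts that $D$ separates $A$ from $B_1,\ldots,B_k,E$ and that $S$ separates the components $C_1,\ldots,C_m$ inside $G[A]$. For the neighbour condition, vertices of $D$ already had neighbours in $B_1\cup\cdots\cup B_k$ by hypothesis, while each vertex of $S$ is forced by the minimality of $S$ as a cutset of $G[A]$ separating $A'$ from $B'$ to have a neighbour in $B'\subset C_2=B_{k+1}^{new}$. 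This yields the desired contradiction.

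I do not expect genuine difficulty in this argument; the only subtlety to watch is the treatment of the ``extra'' components $C_3,\ldots,C_m$, which must be absorbed into $E_{new}$ without pushing its Hall ratio above $p$, and this is precisely where the anticompleteness of $A$ to $E$ (forced by $D$ being a separator and vertices of $D$ lying outside $A\cup E$) and the anticompleteness between distinct components of $G[A]\setminus S$ come together.
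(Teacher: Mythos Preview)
Your approach is the same as the paper's, but there is a genuine gap. You assert that $C_3,\ldots,C_m$ are ``small-$\rho$'' components, i.e.\ that $\rho(C_i)<p$ for all $i\ge3$, and you rely on this to conclude $\rho(E_{new})<p$. Nothing in \cref{claim:sparse} guarantees this: the minimal cutset $S$ may well split $G[A]$ into many components several of which have Hall ratio at least $p$. If some $C_i$ with $i\ge3$ satisfies $\rho(C_i)\ge p$, then $\rho(E_{new})\ge\rho(C_i)\ge p$ and the refined partition fails the required bound on $E$.

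The repair is immediate and is precisely what the paper does: among the components $C_2,\ldots,C_m$, let $B_{k+1},\ldots,B_{k+r}$ be \emph{all} those with Hall ratio at least $p$ (there is at least one, namely $C_2$), and absorb into $E_{new}$ only those components with Hall ratio below $p$. The resulting partition has $k+r\ge k+1$ blocks of type $B$, still contradicting the maximality of $k$. A second minor imprecision: minimality of $S$ forces each $v\in S$ to have a neighbour in the component $C_2$ containing $B'$, not necessarily in $B'$ itself; your final conclusion (a neighbour in $B_{k+1}^{new}=C_2$) is nonetheless correct.
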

		\begin{subproof}
			Suppose not.
			Then by \cref{claim:sparse}, $G[A]$ contains a minimal nonempty cutset $S'$ separating two vertex subsets $A',B'$ with $\rho(A')\ge\max(p,q-2\eps^8\rho(G))$ and $\rho(B')\ge p$.
			Among the components of $G[A\setminus(A'\cup S')]$, let $G[B_{k+1}],\ldots,G[B_{k+r}]$ be those with Hall ratio at least $p$ (thus $r\ge1$), and let $E':=A\setminus(A'\cup S'\cup(B_{k+1}\cup\cdots\cup B_{k+r}))$; then $\rho(E')<p$.
			Then
			\[(A',D\cup S',B_1,\ldots,B_k,B_{k+1},\ldots,B_{k+r},E\cup E')\]
			would be a partition of $V(G)$ violating the maximality of $k$.
			This proves \cref{claim:smalla}.
		\end{subproof}
		
		Now, let $S$ be the set of vertices in $D$ with a neighbour in~$A$.
		We use the $P_5$-freeness of $G$ to bound $\rho(S)$, as follows:
		\begin{claim}
			\label{claim:smalls}
			$\rho(S)\le\eps^2\rho(G)$.
		\end{claim}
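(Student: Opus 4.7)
The plan is to bound $\rho(S)$ by partitioning $S$ into pieces each forming a complete pair with $A$ or one of the $B_j$'s, and exploiting that the second outcome of \cref{lem:anti} is assumed to fail.

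First, I would show that by $P_5$-freeness, every $s \in S$ is complete in $G$ to $A$, or complete in $G$ to some $B_j$. Indeed, $s$ has a neighbour $a \in A$ (by definition of $S$) and a neighbour $b \in B_j$ for some $j$ (since $s \in D$). Were $s$ complete to neither, the connectedness of $G[A]$ would give a non-neighbour $a' \in A$ of $s$ with $a \sim a'$, and the connectedness of $G[B_j]$ would give a non-neighbour $b' \in B_j$ of $s$ with $b \sim b'$; then $b'$--$b$--$s$--$a$--$a'$ would be an induced $P_5$ in $G$ (all other non-edges following from $(A, B_j)$ being anticomplete), contradicting the $P_5$-freeness of $G$.

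Accordingly, let $T \subset S$ be those $s$ complete to $A$, and for each $j \in [k]$ let $U_j \subset S \setminus T$ be those $s$ complete to $B_j$ but to no $B_{j'}$ with $j' < j$, so that $S = T \sqcup U_1 \sqcup \cdots \sqcup U_k$. Then $(T, A)$ and each $(U_j, B_j)$ is a complete pair with $\rho(A), \rho(B_j) \ge p$; since the second outcome of \cref{lem:anti} is assumed to fail, we obtain $\rho(T) < \eps^8 \rho(G)$ and $\rho(U_j) < \eps^8 \rho(G)$ for every $j \in [k]$. The subadditivity of $\rho$ (namely $\rho(X \cup Y) \le \rho(X) + \rho(Y)$) then yields $\rho(S) \le \rho(T) + \sum_{j=1}^k \rho(U_j) < (k+1)\eps^8 \rho(G)$, which is at most $\eps^2 \rho(G)$ provided $k \le \eps^{-6} - 1$.

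The main obstacle is that the partition parameter $k$ is a priori unbounded, so this crude subadditive bound is insufficient when $k$ is large. To handle this regime, I would apply \cref{lem:comb} to the bipartite adjacency between $A$ and a Hall-ratio-witnessing subset $F \subset S$, tuning the threshold $\Gamma$ and the degree bound $\Delta$ so that either $|F|$ is forced to be small (bounding $\rho(S)$ directly) or a comb of many teeth is produced. In the latter case, the $P_5$-free dichotomy established above should force the comb to assemble either into a single complete pair with both sides of sufficiently large Hall ratio, matching outcome 2 of \cref{lem:anti}, or into a complete blockade of length at least $1/\eps$ whose parts have Hall ratio at least (length)${}^{-8}\rho(G)$, matching outcome 3; both contradict our standing assumptions. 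This final packing step, converting a comb structure into one of the forbidden outcomes, is where I expect the bulk of the remaining technical work to lie.
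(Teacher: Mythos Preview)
Your opening dichotomy and the small-$k$ subadditive bound are fine, but the comb application you sketch is aimed at the wrong bipartition, and this is not just ``remaining technical work'' but a genuine obstruction. In \cref{lem:comb} the parameter $\Delta$ is not tunable: it must be an actual upper bound on the number of $B$-neighbours of each vertex in $A$. Between $A$ and any $F\subset S$ you have no such bound in either direction---a single vertex of $A$ may be adjacent to all of $S$---so the hypotheses of \cref{lem:comb} are simply unavailable there.

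The paper instead runs the comb on the $B$-side. Pick $b_i\in B_i$ for each $i$, let $S_0$ be a $\psi$-witness for the part of $S$ that is \emph{mixed} on $A$, and apply \cref{lem:comb} with $\{b_1,\ldots,b_k\}$ in the ``$A$'' role and $S_0$ in the ``$B$'' role. Your own $P_5$-argument, read per block, shows that every $v\in S_0$ is complete or anticomplete to each $B_i$; hence $N(b_i)\cap S_0$ is exactly $S_i:=\{v\in S_0:v\text{ complete to }B_i\}$. Failure of outcome~2 gives $\rho(S_i)\le\eps^8\rho(G)$, and inside the $\psi$-witness $S_0$ (where $\psi(S_0)\gtrsim\eps^2\rho(G)$ under the contradiction hypothesis) this converts to the size bound $|S_i|\le O(\eps^6)|S_0|$, which is the needed $\Delta$. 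The comb then produces $\ell\ge1/\eps$ teeth $X_1,\ldots,X_\ell\subset S_0$ with $\rho(X_i)\ge\ell^{-8}\rho(G)$, and the blockade is complete because for nonadjacent $u\in X_i$, $v\in X_j$ a $u$--$v$ path with interior in $A$ (both have neighbours in the connected $G[A]$) extends to $b_i\text-u\text-\cdots\text-v\text-b_j$, an induced path of length at least four. This yields outcome~3 and the contradiction; no case split on $k$ is needed.
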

		\begin{subproof}
			Suppose not. Let $S'$ be the set of vertices in $S$ mixed on $A$; then $S\setminus S'$ is complete to $A$ and so $\rho(S\setminus S')\le\eps^8\rho(G)$ since the second outcome of the lemma fails.
			Hence
			\[\rho(S')\ge\rho(S)-\rho(S\setminus S')\ge(\eps^2-\eps^8)\rho(G)\ge\frac{15}{16}\eps^2\rho(G).\]
			Let $S_0\subset S'$ be such that $\psi(S_0)=\rho(S')\ge\frac{15}{16}\eps^2\rho(G)$.
			Since $G$ is $P_5$-free, and since every $v\in S_0\subset S$ is mixed on $A$, it follows that $v$ is complete or anticomplete to each of $B_1,\ldots,B_k$.
			Thus, because every vertex in $S$ has a neighbour in $B_1\cup\cdots\cup B_k$, every vertex in $S_0$ is complete in $G$ to at least one of $B_1,\ldots,B_k$.
			For each $i\in[k]$, let $S_i$ be the set of vertices in $S_0$ complete to $B_i$ in $G$; then since the second outcome of the lemma fails, we have $\psi(S_i)\le\rho(S_i)\le\eps^8\rho(G)\le\frac{16}{15}\eps^6\cdot\psi(S_0)$,
			and so $\abs{S_i}\le\frac{16}{15}\eps^6\abs{S_0}$.
			Also, $S_0=S_1\cup\cdots\cup S_k$.
			Let $b_i\in B_i$ for each $i\in[k]$.
			Then by remunerating $[k]$ if necessary, by \cref{lem:comb} with $\{b_1,\ldots,b_k\}$ in place of $A$, $S_0$ in place of $B$, $\Delta=\frac{16}{15}\eps^6\abs{S_0}$, and $\Gamma=\frac3{1280}\eps^{-6}\abs{S_0}$, we see that either:
			\begin{itemize}
				\item $\abs{S_0}<20\sqrt{\Gamma\Delta}=\abs{S_0}$; or
				
				\item there exist $\ell\in[k]$ and $\ell$ disjoint subsets $X_1,\ldots,X_\ell$ of $S$ satisfying for every $i\in[\ell]$, $\abs{X_i}\ge \Gamma/\ell^2$, $X_i\subset S_i$, and $b_i$ is adjacent in $G$ to every vertex in $X_i$ and to no vertex in $X_j$ for all $j\in[\ell]\setminus\{i\}$.
			\end{itemize}
			
			The first bullet cannot hold, so the second bullet holds. Then for every $i\in[\ell]$, we have
			\[\frac{16}{15}\eps^6\abs {S_0}\ge\abs{S_i}\ge\abs{X_i}\ge\Gamma/\ell^2=\frac3{1280}\eps^{-6}\ell^{-2}\abs{S_0}\]
			so $\ell\ge\frac3{64}\eps^{-6}\ge1/\eps$; and furthermore
			\[\rho(X_i)\ge\psi(X_i) \ge\frac3{1280}\eps^{-6}\ell^{-2}\psi(S_0)
			\ge\frac9{4096}\eps^{-3}\ell^{-2}\rho(G)
			\ge \frac9{512}\ell^{-2}\rho(G)
			\ge \ell^{-8}\rho(G).\]
			
			Now, suppose that there are distinct $i,j\in[\ell]$ for which there are $u\in X_i$ and $v\in X_j$ nonadjacent in $G$.
			Let $P$ be a path of $G$ with endpoints $u,v$ and interior in $A$, which exists since $G[A]$ is connected and each of $u,v$ has a neighbour in $A$.
			Then $b_i\text-P\text-b_j$ would be an induced path in $G$ of length at least four, contrary to the $P_5$-freeness of $G$.
			Hence $(X_1,\ldots,X_\ell)$ is a complete blockade in $G$ with $\ell\ge1/\eps$ and $\rho(X_i)\ge\ell^{-8}\rho(G)$ for all $i\in[\ell]$; and this satisfies the third outcome of the lemma, a contradiction.
			This proves \cref{claim:smalls}.
		\end{subproof}
		
		Now, let $B:=B_1\cup\cdots\cup B_k$; then \cref{claim:smalls,claim:smalla} together imply that
		\[\rho(A\cup(D\setminus S)\cup B\cup E)
		\ge \rho(G)-\rho(S)>(1-\eps^2)\rho(G)\ge q>\rho(A).\]
		Thus, since $A$ is anticomplete to $(D\setminus S)\cup B\cup E$, \cref{claim:smalla} yields
		\[\rho((D\setminus S)\cup B\cup E)>(1-\eps^2)\rho(G)\]
		and so $(A,(D\setminus S)\cup B\cup E)$ satisfies the first outcome of the lemma.
		This proves \cref{lem:anti}.
	\end{proof}
	We now turn the first outcome of \cref{lem:p5} (for all $P_5$-free graphs $G$) into an anticomplete pair with very high Hall ratio.
	An interested reader may note that the following result is sufficient to deduce the off-diagonal \erh{} property of $P_5$:
	\begin{lemma}
		\label{lem:1step}
		There exists $a\ge8$ such that for every $\eps\in(0,\frac12]$, every $P_5$-free graph $G$ with $\omega(G)\ge2$ contains either:
		\begin{itemize}
			\item an anticomplete pair $(A,B)$ with $\rho(A),\rho(B)\ge(1-\eps)\rho(G)$;
			
			\item a complete pair $(X,Y)$ with $\rho(X)\ge\eps^a\rho(G)$ and $\rho(Y)\ge2^{-a}\rho(G)$; or
			
			\item a complete blockade $(B_1,\ldots,B_k)$ with $k\ge1/\eps$ and $\rho(B_i)\ge k^{-a}\rho(G)$ for all $i\in[k]$.
		\end{itemize}
	\end{lemma}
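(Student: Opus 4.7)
The plan is to combine Lemma~\ref{lem:p5} (our base dichotomy) with Lemma~\ref{lem:anti} (the decomposition along anticomplete pairs) in a two-step fashion. Let $b\ge4$ be given by Lemma~\ref{lem:p5} and choose $a:=b+16$. Fix a $P_5$-free graph $G$ with $\omega(G)\ge2$ and $\eps\in(0,\tfrac12]$, and suppose for contradiction that the second and third bullets of Lemma~\ref{lem:1step} both fail. Set $\eta:=\eps/2$, $q:=(1-\eta^2)\rho(G)$, and $p:=2^{-b}q$. My strategy is first to show that $G$ is $(p,q)$-sparse, and then to apply Lemma~\ref{lem:anti} with parameter $\eta$ and these $p,q$.

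The sparsity step is the more delicate part. For any induced subgraph $F$ of $G$ with $\rho(F)\ge q$, note that $|F|\ge\rho(F)\ge q>1$, so I can pass to an induced subgraph $F'\subset F$ realising $\psi(F')=\rho(F)$. Apply Lemma~\ref{lem:p5} to $F'$ with parameter $\eps$. The complete-pair bullet there would yield $(X,Y)$ with $\rho(X)\ge\eps^b\rho(F)$ and $\rho(Y)\ge 2^{-b}\rho(F)$; using $\rho(F)\ge q\ge\rho(G)/2$ and $a\ge b+1$, these inequalities verify the second bullet of Lemma~\ref{lem:1step}, contrary to our assumption. The complete-blockade bullet is handled analogously. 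So Lemma~\ref{lem:p5} must return an anticomplete pair $(A,B)$ in $F'$ with $|A|,|B|\ge 2^{-b}|F'|$; because $\psi(F')=\rho(F)$, this cleanly upgrades to $\rho(A),\rho(B)\ge 2^{-b}\rho(F)\ge p$, establishing $(p,q)$-sparsity.

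Now Lemma~\ref{lem:anti} applies. Its second and third bullets translate, via $\eta=\eps/2$ and $a\ge 16$, into the (forbidden) second and third bullets of Lemma~\ref{lem:1step}, so only its first bullet can hold, producing an anticomplete pair $(A,B)$ with $\rho(B)\ge(1-\eta^2)\rho(G)$ and $\rho(A)\ge(1-\eta^2-2\eta^8)\rho(G)$. A routine estimate gives $\eta^2+2\eta^8\le 3\eta^2\le\eps$ for $\eps\in(0,\tfrac12]$, so $\rho(A),\rho(B)\ge(1-\eps)\rho(G)$; this delivers the first bullet of Lemma~\ref{lem:1step} and completes the argument.

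The main obstacle is the sparsity step: Lemma~\ref{lem:p5} hands us a size-based anticomplete pair, whereas Lemma~\ref{lem:anti} demands a Hall-ratio lower bound. The trick of shrinking $F$ to an $F'$ with $\psi(F')=\rho(F)$ before invoking Lemma~\ref{lem:p5} is what allows $|A|\ge 2^{-b}|F'|$ to be upgraded to $\rho(A)\ge 2^{-b}\rho(F)$, and it is precisely this clean passage from vertex counts to the Hall ratio that the $\chi$-based approach of~\cite{2024p5} lacks.
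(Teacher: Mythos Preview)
Your proof is correct and follows essentially the same approach as the paper: establish $(p,q)$-sparsity by applying \cref{lem:p5} to a subgraph $F'$ realising $\psi(F')=\rho(F)$, then invoke \cref{lem:anti}. The only differences are cosmetic (you use $\eta=\eps/2$ and $a=b+16$ where the paper uses $\eps$ directly and $a=\max(b+1,8)$), and your commentary on the size-to-$\rho$ upgrade via $\psi(F')=\rho(F)$ accurately identifies the key device.
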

	\begin{proof}
		Let $b\ge1$ be given by \cref{lem:p5}; we claim that $a:=\max(b+1,8)$ suffices.
		To this end, let $p:=2^{-a}\rho(G)$ and $q:=(1-\eps^2)\rho(G)$, and suppose that none of the outcomes holds; we claim:
		\begin{claim}
			\label{claim:sp}
			$G$ is $(p,q)$-sparse.
		\end{claim}
		\begin{subproof}
			Let $F$ be an induced subgraph of $G$ with $\rho(F)\ge q= (1-\eps^2)\rho(G)\ge\frac34\rho(G)\ge\frac34\omega(G)\ge\frac32$, and let $J$ be an induced subgraph of $F$ with $\psi(J)=\rho(F)$;
			hence $\abs J\ge \psi(J)=\rho(F)\ge\frac32$ and so $\abs J\ge2$.
			By the choice of $b$, $J$ (and so $F$) contains either:
			\begin{itemize}
				\item an anticomplete pair $(A,B)$ with $\abs A,\abs B\ge 2^{-b}\abs J$;
				
				\item a complete pair $(X,Y)$ with $\psi(X)\ge\eps^b\psi(J)$ and $\psi(Y)\ge 2^{-b}\psi(J)$; or
				
				\item a complete blockade $(B_1,\ldots,B_k)$ with $k\ge1/\eps$ and $\psi(B_i)\ge k^{-b}\psi(J)$ for all $i\in[k]$.
			\end{itemize}
			
			The last two bullets do not hold because the last two outcomes of the theorem fail, $\psi(J)=\rho(F)\ge\frac12\rho(G)\ge\eps\cdot\rho(G)$, and $a\ge b+1$ by the choice of $a$.
			Thus the first bullet holds; and so $\rho(A),\rho(B)\ge 2^{-b}\psi(J)\ge2^{-b-1}\rho(G)\ge 2^{-a}\rho(G)$.
			This proves \cref{claim:sp}.
		\end{subproof}
		
		Now, by \cref{claim:sp,lem:anti}, $G$ contains either:
		\begin{itemize}
			\item an anticomplete pair $(A,B)$ with $\rho(A)\ge q-2\eps^8\rho(G)=(1-\eps^2-2\eps^8)\rho(G)\ge(1-\eps)\rho(G)$ and $\rho(B)\ge(1-\eps^2)\rho(G)$;
			
			\item a complete pair $(X,Y)$ with $\rho(X)\ge\eps^8\rho(G)$ and $\rho(Y)\ge p=2^{-a}\rho(G)$; or
			
			\item a complete blockade $(B_1,\ldots,B_k)$ with $k\ge1/\eps$ and $\rho(B_i)\ge k^{-8}\rho(G)$ for all $i\in[k]$.
		\end{itemize}
		Since $a\ge 8$ by the choice of $a$,
		one of the outcomes of the lemma holds, a contradiction.
		This proves \cref{lem:1step}.
	\end{proof}
	
	In what follows, let $\phi_0:=1$; and
	for every integer $s\ge1$, let $\phi_s:=\prod_{i=1}^{s}(1-2^{-2^{i+1}})$ if $s\ge1$,
	and let $\phi_{r,s}:=\phi_s/\phi_r=\prod_{r<i\le s}(1-2^{-2^{i+1}})$ for every integer $r$ with $0\le r\le s$ (so $\phi_{0,s}=\phi_s$).
	The following lemma showcases the process of extracting anticomplete pairs with increasingly high-$\rho$ (actually, arbitrarily close to $\rho(G)$) from $P_5$-free graphs $G$.
	\begin{lemma}
		\label{lem:incrho}
		Let $a\ge8$ be given by \cref{lem:1step}. Then for every integer $s\ge0$, every $P_5$-free graph $G$ with $\omega(G)\ge2$ contains either:
		\begin{itemize}
			\item an anticomplete pair $(A,B)$ with $\rho(A),\rho(B)\ge(1-2^{1-2^{s+1}})\rho(G)$;
			
			\item a complete pair $(X,Y)$ with
			$$\rho(X)\ge \phi_{r,s}\cdot2^{-2^{r+3}}\rho(G)
			\qquad\text{and}\qquad
			\rho(Y)\ge \phi_{r,s}\cdot\left(1-\frac52\cdot2^{-2^{r}}\right)\rho(G)$$
			for some integer $r$ with $1\le r\le s$ (so $s\ge1$); or
			
			\item a complete blockade $(B_1,\ldots,B_k)$ with $k\ge2$ and $\rho(B_i)\ge \phi_s\cdot k^{-a}\rho(G)$ for all $i\in[k]$.
		\end{itemize}
	\end{lemma}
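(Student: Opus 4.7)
The plan is to proceed by induction on $s$. For the base case $s=0$, I would apply \cref{lem:1step} with $\eps=1/2$: its anticomplete outcome yields $(A,B)$ with $\rho(A),\rho(B)\ge\rho(G)/2 = (1-2^{1-2^{1}})\rho(G)$; its complete-pair outcome $(X,Y)$ with $\rho(X),\rho(Y)\ge 2^{-a}\rho(G)$ supplies a complete $k=2$ blockade meeting $\rho(B_i)\ge 2^{-a}\rho(G)=\phi_{0}k^{-a}\rho(G)$; and its complete-blockade outcome matches the third outcome at $s=0$ directly. Note that the complete-pair outcome at $s=0$ is vacuous (it requires $1\le r\le 0$), so it is consistent to route this case through the blockade outcome.

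For the inductive step $s\to s+1$, I would set $\eps:=2^{-2^{s+1}}$, $q:=(1-\eps^{2})\rho(G)$, and $p:=(1-\tfrac{5}{2}\eps)\rho(G)$, and assume for contradiction that the complete-pair and complete-blockade outcomes at level $s+1$ both fail in $G$. The strategy is to first establish that $G$ is $(p,q)$-sparse and then apply \cref{lem:anti}. Let $F$ be any induced subgraph of $G$ with $\rho(F)\ge q>1$; then $\omega(F)\ge 2$, so the inductive hypothesis applies to $F$. The crucial algebraic identity is $\phi_{r,s}(1-\eps^{2})=\phi_{r,s+1}$, which holds because $1-\eps^{2}=1-2^{-2^{s+2}}=\phi_{s+1}/\phi_s$. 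Consequently, the level-$s$ complete-pair or complete-blockade outcome in $F$ promotes, after multiplying the bounds by the slack $\rho(F)/\rho(G)\ge 1-\eps^{2}$, to the corresponding level-$(s+1)$ outcome in $G$, contradicting our assumption. Therefore $F$ must satisfy the level-$s$ anticomplete outcome, which supplies a pair in $F$ with both Hall ratios at least $(1-\eps_s)(1-\eps^{2})\rho(G)\ge p$ for $\eps\le 1/2$, showing $G$ is $(p,q)$-sparse.

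Next I would apply \cref{lem:anti} to $G$ with these $\eps,p,q$. Its complete-pair outcome gives $\rho(X)\ge \eps^{8}\rho(G)=2^{-2^{s+4}}\rho(G)$ and $\rho(Y)\ge p=(1-\tfrac{5}{2}\cdot 2^{-2^{s+1}})\rho(G)$, which is precisely the $r=s+1$ instance of the level-$(s+1)$ complete-pair outcome (since $\phi_{s+1,s+1}=1$). Its complete-blockade outcome gives $k\ge 1/\eps\ge 2$ with $\rho(B_i)\ge k^{-8}\rho(G)\ge \phi_{s+1}k^{-a}\rho(G)$ (using $a\ge 8$ and $\phi_{s+1}\le 1$). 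Both cases contradict our standing assumption, so \cref{lem:anti} must return an anticomplete pair $(A,B)$ with $\rho(A)\ge(1-\eps^{2}-2\eps^{8})\rho(G)$ and $\rho(B)\ge(1-\eps^{2})\rho(G)$. Since $2\eps^{8}\le \eps^{2}$ for $\eps\le 1/2$, this yields $\rho(A),\rho(B)\ge(1-2\eps^{2})\rho(G)=(1-2^{1-2^{s+2}})\rho(G)$, completing the induction.

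The main obstacle is threading the needle in the choice of $\eps$: it must be large enough that $\eps^{8}\ge 2^{-2^{s+4}}$, so that the complete-pair outcome of \cref{lem:anti} fits exactly into the $r=s+1$ slot; yet small enough that the loss factor $\eps^{2}$ incurred in passing from $\rho(G)$ to $q$ coincides with the multiplier $\phi_{s+1}/\phi_s=1-2^{-2^{s+2}}$ between consecutive levels. The choice $\eps=2^{-2^{s+1}}$ sits at the unique point where both constraints are tight simultaneously, and this is exactly what drives the doubly exponential sequence $\eps_s$ and the definition of $\phi_s$.
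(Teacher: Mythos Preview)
Your proposal is correct and follows the paper's proof essentially verbatim: the same base case via \cref{lem:1step} with $\eps=\tfrac12$ (routing the complete-pair outcome into the $k=2$ blockade case), and the same inductive step with $\eps=2^{-2^{s+1}}$, $p=(1-\tfrac52\eps)\rho(G)$, $q=(1-\eps^{2})\rho(G)$, the identity $\phi_{r,s}(1-\eps^{2})=\phi_{r,s+1}$ to promote the level-$s$ outcomes in $F$ to level-$(s+1)$ outcomes in $G$, and then \cref{lem:anti} to finish. Your closing remark on why $\eps=2^{-2^{s+1}}$ is forced is a nice addition not spelled out in the paper.
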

	\begin{proof}
		We proceed by induction on $s\ge0$. The case $s=0$ follows from \cref{lem:1step} with $\eps=\frac12$, the second and third outcomes of which verify the third outcome of the lemma.
		Now, for $s\ge0$, we will prove the lemma for $s+1$ assuming that it is true for $s$ (and for all $P_5$-free graphs $G$ with $\omega(G)\ge2$).
		To this end, let $G$ be a $P_5$-free graph; and assume that the last two outcomes of the lemma (for $s+1$ in place of $s$) do not hold. Let $\eps:=2^{-2^{s+1}}$, $p:=(1-\frac52\eps)\rho(G)$, and $q:=(1-\eps^{2})\rho(G)$.
		We claim that:
		\begin{claim}
			\label{claim:incrho}
			$G$ is $(p,q)$-sparse.
		\end{claim}
		\begin{subproof}
			Let $F$ be an induced subgraph of $G$ with $\rho(F)\ge q=(1-\eps^2)\rho(G)=(1-2^{-2^{s+2}})\rho(G)$;
			then $\rho(F)>\frac12\rho(G)\ge\frac12\omega(G)\ge1$ and so $\omega(F)\ge2$.
			By induction for $s$ applied to $F$, $F$ contains either:
			\begin{itemize}
				\item an anticomplete pair $(A,B)$ with $\rho(A),\rho(B)\ge(1-2^{1-2^{s+1}})\rho(F)=(1-2\eps)\rho(F)$;
				
				\item a complete pair $(X,Y)$ with $$\rho(X)\ge\phi_{r,s}\cdot2^{-2^{r+3}}\rho(F)
				\qquad\text{and}\qquad
				\rho(Y)\ge\phi_{r,s}\cdot\left(1-\frac52\cdot2^{-2^{r}}\right)\rho(F)$$
				for some integer $r$ with $1\le r\le s$; or
				
				\item a complete blockade $(B_1,\ldots,B_k)$ with $k\ge2$ and $\rho(B_i)\ge\phi_s\cdot k^{-a}\rho(F)$ for all $i\in[k]$.
			\end{itemize}
			
			Because
			$$\phi_{r,s}\cdot\rho(F)\ge \phi_{r,s}\cdot(1-2^{-2^{s+2}})\rho(G)=\phi_{r,s+1}\cdot\rho(G)$$
			for all integers $r$ with $0\le r\le s$,
			if the second or third bullet holds then one of the second or third outcome of the lemma holds (respectively) for $s+1$, a contradiction.
			Thus the first bullet holds; and this proves \cref{claim:incrho}
			because
			\[(1-2\eps)\rho(F)\ge(1-2\eps)(1-\eps^2)\rho(G)\ge\left(1-\frac52\eps\right)\rho(G)=p.
			\qedhere\]
		\end{subproof}
		
		Now, by \cref{lem:anti}, $G$ contains either:
		\begin{itemize}
			\item an anticomplete pair $(A,B)$ with $\rho(A)\ge q-2\eps^8\rho(G)$ and $\rho(B)\ge(1-\eps^2)\rho(G)$;
			
			\item a complete pair $(X,Y)$ with $\rho(X)\ge \eps^8\rho(G)$ and $\rho(Y)\ge p$; or
			
			\item a complete blockade $(B_1,\ldots,B_k)$ with $k\ge2$ and $\rho(B_i)\ge k^{-8}\rho(G)$ for all $i\in[k]$.
		\end{itemize}
		
		If the second bullet holds then \[\rho(X)\ge\eps^8\rho(G)=2^{-2^{s+4}}\rho(G)
		\qquad\text{and}\qquad
		\rho(Y)\ge p=\left(1-\frac52\cdot 2^{-2^{s+1}}\right)\rho(G)\]
		and so the second outcome of the lemma holds for $s+1$ (with $r=s+1$), a contradiction.
		If the third bullet holds then the last outcome of the lemma holds, a contradiction.
		
		Hence the first bullet holds; then $\rho(B)\ge(1-\eps^2)\rho(G)=(1-2^{-2^{s+2}})\rho(G)$ and
		\[\rho(A)\ge q-2\eps^8\rho(G)\ge (1-\eps^2-2\eps^8)\rho(G)\ge(1-2\eps^2)\rho(G)
		=(1-2^{1-2^{s+2}})\rho(G)\]
		and so the first outcome of the lemma holds for $s+1$. This proves \cref{lem:incrho}.
	\end{proof}
	
	We are now ready to prove \cref{thm:main}, which we restate here for convenience:
	\begin{theorem}
		\label{thm:niam}
		There exists $d\ge9$ such that every $P_5$-free graph $G$ with $\omega(G)\ge2$ contains either:
		\begin{itemize}
			\item a complete pair $(X,Y)$ with $\rho(X)\ge y^9\rho(G)$ and $\rho(Y)\ge (1-3y)\rho(G)$ for some $y\in(0,\frac14]$; or
			
			\item a complete blockade $(B_1,\ldots,B_k)$ with $k\ge2$ and $\rho(B_i)\ge k^{-d}\rho(G)$ for all $i\in[k]$.
		\end{itemize}
	\end{theorem}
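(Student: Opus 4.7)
Set $d:=a+1\ge9$, where $a\ge8$ is the constant from \cref{lem:incrho}. As a first reduction, I would replace $G$ by a connected component of maximum Hall ratio: the resulting $F$ is connected, has $\rho(F)=\rho(G)$, and (since $\rho(F)\ge\omega(G)\ge2$ forces a nontrivial induced subgraph with an edge) satisfies $\omega(F)\ge2$; moreover, any complete pair or blockade in $F$ with the desired $\rho$-bounds is one in $G$ with the same $\rho$-bounds. So assume $G$ is connected. Next, pick $s\ge1$ large enough that $\eps:=2^{1-2^{s+1}}\le\rho(G)^{-1/9}$; this is possible since $\eps\to 0$ doubly exponentially in $s$. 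Apply \cref{lem:incrho} to $G$ with this $s$ and handle the three outcomes.

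The two \dd easy\ee{} outcomes give the theorem directly. For the complete-blockade outcome, note that $\phi_s$ is bounded below by a positive constant---specifically $\phi_s\ge\tfrac12$, since the infinite product $\prod_{i\ge1}(1-2^{-2^{i+1}})$ is bounded away from $0$---so using $\tfrac12\ge 1/k$ for $k\ge2$, the bound $\rho(B_i)\ge\phi_s\cdot k^{-a}\rho(G)$ upgrades to $\rho(B_i)\ge k^{-d}\rho(G)$, yielding the second outcome of the theorem. For the complete-pair outcome with index $r\in[1,s]$, I set $y:=2^{-2^r}\in(0,1/4]$: the inequality $\rho(X)\ge y^9\rho(G)$ reduces to $\phi_{r,s}\ge 2^{-2^r}$, immediate from $\phi_{r,s}\ge\tfrac12$; and for $\rho(Y)\ge(1-3y)\rho(G)$, using the tail estimate $1-\phi_{r,s}\le 2\cdot 2^{-2^{r+2}}$ coming from the definition of $\phi_{r,s}$, the required inequality reduces after a short computation to $3\cdot 2^r\ge 2$, true for $r\ge 1$.

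The main case is the anticomplete-pair outcome $(A_0,B_0)$ with $\rho(A_0),\rho(B_0)\ge(1-\eps)\rho(G)$. First I refine each of $A_0,B_0$ to a connected induced subgraph $A\subset A_0$, $B\subset B_0$ with $\psi(A)=\rho(A_0)$ and $\psi(B)=\rho(B_0)$. Since $G$ is connected, there is a minimal nonempty cutset $S\subset V(G)\setminus(A\cup B)$ separating $A$ from $B$; because $A,B$ are connected and disjoint from $S$, they lie in single components $C_A,C_B$ of $G\setminus S$ respectively, which I enlarge $A,B$ to, preserving $\rho(C_A),\rho(C_B)\ge(1-\eps)\rho(G)$ and making $C_A,C_B$ anticomplete in $G$ (edges between them in $G$ would survive deletion of $S$). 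The key structural claim, modelled on the proof of \cref{lem:anti}, is that every $v\in S$ is complete to $C_A$ or to $C_B$ in $G$: otherwise, minimality of $S$ forces $v$ to have a neighbor in each of $C_A,C_B$, and connectedness together with non-completeness to each yields consecutive $a',a''$ on a path in $C_A$ satisfying $a'\sim v\not\sim a''$ and analogous $b',b''$ in $C_B$, producing an induced $P_5$ on $a'',a',v,b',b''$, a contradiction. Picking any $v\in S$, WLOG complete to $C_A$, and setting $(X,Y):=(\{v\},C_A)$ with $y:=\eps\in(0,1/4]$, the choice of $s$ gives $\rho(X)=1\ge\eps^9\rho(G)=y^9\rho(G)$, while $\rho(Y)\ge(1-\eps)\rho(G)\ge(1-3y)\rho(G)$, matching the first outcome of the theorem.

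The main obstacle will be the anticomplete-pair case. Choosing $s$ just large enough that $\eps^9\rho(G)\le1$ is the delicate mechanism that converts the bare nonemptiness of the minimal cutset into a valid single-vertex complete pair. In addition, the structural claim about cutset vertices being complete to one side is not immediate for arbitrary anticomplete pairs: it needs the preliminary passage to \emph{connected} refinements of $A_0,B_0$ followed by enlarging to components of $G\setminus S$, after which the induced-$P_5$ argument closes things cleanly.
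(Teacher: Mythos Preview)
Your proof is correct and follows essentially the same approach as the paper: apply \cref{lem:incrho} with $s$ chosen so that the anticomplete-pair outcome has threshold small enough that a single cutset vertex serves as $X$, then use $P_5$-freeness to show any minimal cutset vertex is complete to one side. The only cosmetic differences are your choice of $\eps\le\rho(G)^{-1/9}$ versus the paper's $2^{2^s}\ge\rho(G)$ (giving $y=\rho(G)^{-2}$), and your explicit enlargement to components of $G\setminus S$ in place of the paper's maximality-of-$|A|+|B|$ argument.
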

	\begin{proof}
		Let $a\ge8$ be given by \cref{lem:1step}. We claim that $d:=a+1$ suffices.
		To see this, assume that $G$ is connected, and suppose that both outcomes do not hold.
		Let $w:=\rho(G)\ge2$; we claim that:
		\begin{claim}
			\label{claim:toobig}
			$G$ contains an anticomplete pair $(A,B)$ with $\rho(A),\rho(B)\ge (1-2w^{-2})\rho(G)$.
		\end{claim}
		\begin{subproof}
			Let $s\ge0$ be such that $2^{2^s}\ge w$. Observe that for all integers $r$ with $0\le r\le s$,
			\[\phi_{r,s}=\prod_{r<i\le s}(1-2^{-2^{i+1}})
			\ge 1-\sum_{r<i\le s}2^{-2^{i+1}}\ge 1-2^{-1-2^r}\ge1/2.\]
			
			Now, by \cref{lem:incrho}, $G$ contains either:
			\begin{itemize}
				\item an anticomplete pair $(A,B)$ with $\rho(A),\rho(B)\ge (1-2^{1-2^{s+1}})\rho(G)\ge(1-2w^{-2})\rho(G)$;
				
				\item a complete pair $(X,Y)$ with $$\rho(X)\ge\phi_{r,s}\cdot 2^{-2^{r+3}}\rho(G)
				\qquad\text{and}\qquad
				\rho(Y)\ge\phi_{r,s}\cdot\left(1-\frac52\cdot 2^{-2^r}\right)\rho(G)$$
				for some integer $r$ with $1\le r\le s$ (so $s\ge1$); or
				
				\item a complete blockade $(B_1,\ldots,B_k)$ with $k\ge2$ and $\rho(B_i)\ge\phi_s\cdot k^{-a}\rho(G)$ for all $i\in[k]$.
			\end{itemize}
			
			If the second bullet holds then
			\[\begin{aligned}
				\rho(X)&\ge \phi_{r,s}\cdot 2^{-2^{r+3}}\rho(G)\ge2^{-1-2^{r+3}}\rho(G)\ge2^{-9\cdot 2^r}\rho(G),\\
				\rho(Y)&\ge\phi_{r,s}\cdot\left(1-\frac52\cdot 2^{-2^r}\right)\rho(G)
				\ge(1-2^{-1-2^r})\left(1-\frac52\cdot 2^{-2^r}\right)\rho(G)
				\ge (1-3\cdot2^{-2^r})\rho(G)
			\end{aligned}\]
			and so the first outcome of the theorem holds with $y=2^{-2^r}\le\frac14$, contrary to our assumption.
			
			If the third bullet holds then $\rho(B_i)\ge \phi_s\cdot k^{-a}\rho(G)\ge k^{-1-a}\rho(G)\ge k^{-d}\rho(G)$ for all $i\in[k]$ (note that $k\ge 2$); and so the second outcome of the theorem holds, contrary to our assumption.
			
			Hence, the first bullet above holds, proving \cref{claim:toobig}.
		\end{subproof}
		
		Now, \cref{claim:toobig} gives an anticomplete pair $(A,B)$ in $G$ with $\rho(A),\rho(B)\ge (1-2w^{-2})\rho(G)$.
		Among all such pairs, choose one such that $G[A],G[B]$ are connected and $\abs A+\abs B$ is maximal.
		Since $G$ is connected, there is a minimal nonempty cutset $S$ separating $A,B$ in $G$.
		By the maximality of $\abs A+\abs B$, $G[A]$ and $G[B]$ are then two of the components of $G\setminus S$.
		Let $v\in S$; then since $G[A],G[B]$ are connected, $S$ is minimal, and $G$ is $P_5$-free, we see that $v$ is complete to $A$ or to $B$ in $G$, which yields $\rho(N_G(v))\ge\min(\rho(A),\rho(B))\ge(1-2w^{-2})\rho(G)$.
		Then the first outcome of the lemma holds with $X=\{v\}$, $Y=N_G(v)$, and $y=w^{-2}\le\frac14$, a contradiction.
		This proves \cref{thm:niam}.
	\end{proof}

	\section{Disjoint union}
	\label{sec:union}
	We conclude the paper with a short proof that the poly-$\chis$-bounding property, the poly-$\rho$-bounding property, and the off-diagonal \erh{} property are all closed under disjoint union.
	To do so, we apply the following \dd asymmetric R\"odl\ee{} result~\cite[Theorem 6.7]{2025thes}:
	\begin{theorem}
		\label{thm:arodl}
		For every graph $H$, there exists $C\ge1$ such that for every $\delta,\eps,\eta\in(0,\frac12)$ satisfying
		\[\delta=2^{-C\log\eps^{-1}\log\eta^{-1}}
		=\eps^{C\log\eta^{-1}},\]
		every $H$-free graph $G$ has an $\eta$-sparse or $(1-\eps)$-dense induced subgraph with at least $\delta\abs G$ vertices.
	\end{theorem}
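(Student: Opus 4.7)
The plan is to prove Theorem~\ref{thm:arodl} by induction on $|V(H)|$, combined with an iterative sparsification that runs for $T:=\lceil\log\eta^{-1}\rceil$ rounds and loses only a bounded polynomial-in-$\eps$ factor per round, so that the total loss after $T$ rounds is $\eps^{C\log\eta^{-1}}=\delta$ as required.

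By complementation symmetry ($G\mapsto\bar G$, $H\mapsto\bar H$, swapping the two outcomes and the roles of $\eps,\eta$) one may assume $\eta\le\eps$. The target is to build a nested sequence $G=G_0\supseteq G_1\supseteq\cdots\supseteq G_T$ with each $G_i$ being $2^{-i}$-sparse and $|G_i|\ge\eps^{Ci}|G|$ for a constant $C=C(H)$. Then $G_T$ is $\eta$-sparse of size at least $\eps^{CT}|G|\ge\delta|G|$. The procedure would terminate early whenever a $(1-\eps)$-dense induced subgraph of size at least $\delta|G|$ first appears.

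For the step $G_i\to G_{i+1}$, I would pick a vertex $u\in V(H)$ adjacent to every other vertex of $H$ and set $H':=H-u$. Given the $2^{-i}$-sparse $H$-free graph $F=G_i$ and a vertex $v\in V(F)$, the neighborhood $N_F(v)$ has size at most $2^{-i}|F|$ and is $H'$-free, since any induced copy of $H'$ in $N_F(v)$ together with $v$ playing the role of $u$ would form an induced $H$ in $F$. Applying the inductive hypothesis for $H'$ to $N_F(v)$ with parameter $\eps$ on the dense side and a constant (say $1/2$) on the sparse side yields, with loss factor $\eps^{C(H')}$ independent of $i$, either a $(1-\eps)$-dense induced subgraph (done) or a $(1/2)$-sparse one. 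Combining the latter with a suitable subset of the complement $V(F)\setminus N_F[v]$ — which, being a $(1-2^{-i})$-fraction of $F$ and itself inherited from an $H$-free $2^{-i}$-sparse ambient graph — and tracking degrees carefully, produces $G_{i+1}$ of the required sparseness $2^{-i-1}$.

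The main obstacle I anticipate is handling general $H$ in which no vertex is adjacent to all others, so that the preceding $H'$-freeness of $N_F(v)$ is not immediate. I would address this by a build-up trick: within each sparsification round, run up to $|V(H)|$ sub-steps, at each sub-step growing a partial witness for an induced copy of $H$ in $G$ together with a shrinking host set in which the partial witness could in principle be completed. Either a sparse or dense piece of the target size emerges at some sub-step, or else the witness extends by one vertex; after $|V(H)|$ sub-steps the witness would complete to an induced $H$, contradicting the $H$-freeness of $G$. The per-round loss stays polynomial in $\eps$, with total exponent $C(H)=O(|V(H)|^2)$ after absorbing the build-up and the inductive constant $C(H')$, giving the accumulated loss $\eps^{C(H)\log\eta^{-1}}=\delta$ required by the theorem.
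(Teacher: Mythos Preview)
The paper does not prove Theorem~\ref{thm:arodl}; it is quoted from \cite[Theorem~6.7]{2025thes} and used only as a black box in Section~\ref{sec:union}. So there is no proof in the paper to compare your attempt against.

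Your sketch has a genuine gap in the step $G_i\to G_{i+1}$. From the inductive call on $N_F(v)$ you obtain $S\subseteq N_F(v)$ that is $\tfrac12$-sparse \emph{as an induced subgraph of itself}, with $\abs{S}\ge\eps^{C(H')}\abs{N_F(v)}$. You then need $G_{i+1}$ to be $2^{-(i+1)}$-sparse, and neither of the two options you suggest achieves this. If you set $G_{i+1}=S$, the subgraph is only guaranteed to be $\tfrac12$-sparse, so the iteration stalls at $\tfrac12$ and never reaches $\eta$. If instead you union $S$ with a large part of $V(F)\setminus N_F[v]$, then vertices in that part retain degree up to $2^{-i}\abs F$ inside $F$, and since the union has size of order $\abs F$, the relative maximum degree remains of order $2^{-i}$, not $2^{-(i+1)}$. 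The phrase ``tracking degrees carefully'' is precisely where the missing idea lives: nothing in your construction forces the relative maximum degree to halve when passing from $G_i$ to $G_{i+1}$. The ``build-up trick'' for general $H$ that you invoke at the end is a standard Erd\H os--Hajnal device for locating a large homogeneous set, but it does not supply the sparsification mechanism either; it would at best replace the $H'$-freeness of a single neighbourhood, not fix the degree-halving problem.
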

	We will also need the following \dd trees and linear anticomplete pairs\ee{} theorem~\cite{MR4170220}, which generalises \cref{thm:p5sparse}:
	\begin{theorem}
		[Chudnovsky--Scott--Seymour--Spirkl]
		\label{thm:tree}
		For every forest $T$, there exists $\eta>0$ such that every $\eta$-sparse graph $G$ with $\abs G\ge 2$ contains an anticomplete pair $(A,B)$ with $\abs A,\abs B\ge \eta\abs G$.
	\end{theorem}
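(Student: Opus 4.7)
I note first that the statement as written must implicitly assume $G$ is $T$-free (since $\eta$-sparse expanders violate the conclusion, and the forest $T$ is otherwise unused); the $T$-free version is the natural generalisation of \cref{thm:p5sparse}, and I propose a proof of that. My plan is induction on $|V(T)|$, with the base case $|V(T)| \le 1$ being vacuous because every $T$-free graph is then empty. For the inductive step, pick $v \in V(T)$ of degree at most $1$ in $T$, set $T' := T - v$, and let $\eta' > 0$ be supplied by induction for $T'$. I would choose $\eta$ sufficiently small in terms of $\eta'$ and $|V(T)|$, say $\eta \le \eta'/(4|V(T)|^{2})$. A routine component analysis (extracting a component of intermediate size, splitting tiny components into two halves, or passing to the unique huge component) reduces to the case $G$ is connected.

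Suppose first that $v$ is isolated in $T$. If $G$ is $T'$-free, the induction hypothesis applied to $G$ itself gives the pair. Otherwise, fix an induced copy $X$ of $T'$ in $G$; any vertex of $V(G) \setminus N[X]$ would be anticomplete to $X$ and extend it to an induced copy of $T = T' \sqcup \{v\}$, contradicting $T$-freeness. Hence $V(G) = N[X]$, so $|G| \le |V(T)|(1+\eta|G|)$, which for $\eta$ small forces $|G|$ to be bounded by a constant depending only on $T$. In that bounded regime, $\eta$-sparsity with $\eta < 1/|G|$ renders $G$ edgeless, and any two vertices furnish the required anticomplete pair.

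The remaining case, in which $v$ is a leaf of $T$ with unique neighbour $u$, is the heart of the argument. Let $\mathcal F$ be a maximal vertex-disjoint family of induced copies of $T'$ in $G$, and let $R := V(G) \setminus \bigcup_{X \in \mathcal F} X$, which is $T'$-free by maximality. If $|R| \ge |G|/2$, then $G[R]$ inherits $T'$-freeness and enough sparsity (relative to $|R|$) for the induction hypothesis to produce an anticomplete pair of size at least $\eta'|R|/2 \ge \eta|G|$. Otherwise $|\mathcal F| \ge |G|/(2|V(T')|)$ is linear in $|G|$, and for each $X \in \mathcal F$ with $u$-image $u^*(X)$, $T$-freeness implies that the singleton $\{u^*(X)\}$ is anticomplete to $V(G) \setminus N[X]$: indeed, any neighbour of $u^*(X)$ outside $X$ that were also anticomplete to $X \setminus \{u^*(X)\}$ would extend $X$ to an induced $T$. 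This furnishes a pair of sizes $(1, \approx |G|)$ from each $X \in \mathcal F$, and the task is to glue many such pairs into a single bipartite anticomplete pair with both sides of size $\eta|G|$.

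\textbf{Main obstacle.} The delicate point is inflating the left-hand singleton to a linear-sized set. The naive greedy selection of pairwise non-adjacent $u^*$'s fails, because $\eta$-sparsity alone only guarantees an independent set of size $\Theta(1/\eta)$, which is a constant rather than $\eta|G|$. The correct approach picks a subfamily $\mathcal F_0 \subseteq \mathcal F$ whose $u^*$-images are pairwise non-adjacent \emph{and} whose copies $X$ jointly satisfy $\bigl|\bigcap_{X \in \mathcal F_0}(V(G) \setminus N[X])\bigr| \ge \eta|G|$, yielding a single pair with both sides linear. Matching the accumulation rate of $u^*$-vertices against the erosion of the common residual, with a universal $\eta$ depending only on $T$ rather than on $|G|$, is the technical core of the argument; it is cleanly handled by a \emph{rooted} version of the induction that tracks the attachment point $u$, so that the leaf-extension step from $T'$ to $T$ carries over with only a constant loss in $\eta$.
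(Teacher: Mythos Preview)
The paper does not prove Theorem~\ref{thm:tree}; it is quoted without proof from the cited Chudnovsky--Scott--Seymour--Spirkl paper (and you are right that the hypothesis ``$T$-free'' has been accidentally dropped from the statement). So there is no in-paper argument to compare against; what follows is an assessment of your proposal on its own.

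Your inductive setup, the component reduction, the isolated-vertex case, and the subcase $|R|\ge|G|/2$ are all fine. The leaf case with linearly many disjoint copies of $T'$ is where the content lies, and here the proposal is not a proof but an acknowledgement of a gap. First, the sentence invoking $T$-freeness is confused: the claim that $\{u^*(X)\}$ is anticomplete to $V(G)\setminus N[X]$ is trivially true of \emph{every} vertex of $X$ and uses nothing about $T$, while your ``indeed'' clause actually proves the different statement that every neighbour of $u^*(X)$ outside $X$ has a second neighbour in $X\setminus\{u^*(X)\}$, which you then never exploit. So at that point $T$-freeness has done no work, and you are left only with the generic fact, valid in any $\eta$-sparse graph, that each vertex is anticomplete to a set of size roughly $(1-\eta)|G|$.

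You correctly identify that upgrading these $(1,\text{large})$ pairs to a single $(\eta|G|,\eta|G|)$ pair is the whole difficulty, but ``a rooted version of the induction that tracks the attachment point $u$'' is a name, not an argument: you do not say what stronger statement you would induct on, nor why adding the leaf $v$ back preserves it with only constant loss in $\eta$. This step is the entire substance of the Chudnovsky--Scott--Seymour--Spirkl theorem, and their actual proof is considerably more elaborate than anything hinted at here. As written, your proposal stops exactly where the real work begins.
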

	Combining the two above results gives:
	\begin{theorem}
		\label{thm:wanti}
		For every forest $T$, there exists $d\ge2$ such that every non-complete graph $T$-free graph $G$ contains an anticomplete pair $(A,B)$ with $\abs A,\abs B\ge \omega(G)^{-d}\abs G$.
	\end{theorem}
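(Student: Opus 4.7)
The plan is to combine the asymmetric R\"odl theorem (\cref{thm:arodl}) with the \dd trees and linear anticomplete pairs\ee{} theorem (\cref{thm:tree}), choosing the density threshold $\eps$ in \cref{thm:arodl} as a small power of $\omega(G)^{-1}$. This will force the dense outcome of \cref{thm:arodl} to produce a clique in $G$ of size exceeding $\omega(G)$ (via a greedy independent set in the complement)---a contradiction---while still keeping the guarantee $\delta$ on the relative size of the extracted induced subgraph polynomial in $\omega(G)^{-1}$.

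Concretely, I would let $\eta=\eta(T)\in(0,\frac12)$ be the constant from \cref{thm:tree} and let $C=C(T)\ge1$ be the constant from \cref{thm:arodl} with $H:=T$. Set $\eps:=1/(3\omega(G))$; the prescribed $\delta$ in \cref{thm:arodl} then equals $(3\omega(G))^{-C\log\eta^{-1}}$, which is $\omega(G)^{-C'}$ for some $C'=C'(T)\ge1$. We may assume $\omega(G)\ge2$ and $\abs G\ge\omega(G)^d$, since otherwise any two non-adjacent vertices of $G$ (which exist because $G$ is not complete) already form an anticomplete pair $(\{u\},\{v\})$ with $1\ge\abs G/\omega(G)^d$.

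Applying \cref{thm:arodl} now yields an induced subgraph $F$ of $G$ with $\abs F\ge\delta\abs G\ge\omega(G)^{d-C'}\ge2\omega(G)$ (for $d$ large enough in terms of $T$) which is either $\eta$-sparse or $(1-\eps)$-dense. In the $\eta$-sparse case, \cref{thm:tree} supplies an anticomplete pair $(A,B)$ in $F$ with $\abs A,\abs B\ge\eta\abs F\ge\eta\delta\abs G\ge\omega(G)^{-d}\abs G$ for $d$ sufficiently large. In the $(1-\eps)$-dense case, $\overline F$ has maximum degree at most $\eps\abs F$, and a greedy argument yields an independent set in $\overline F$ of size at least $\abs F/(\eps\abs F+1)>\omega(G)$ (using $\eps\omega(G)=1/3$ and $\abs F\ge2\omega(G)$), i.e.\ a clique in $F\subseteq G$ of size exceeding $\omega(G)$---a contradiction.

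The main point is the choice $\eps=\Theta(\omega(G)^{-1})$, which simultaneously rules out the dense outcome via the clique bound on $G$ and keeps $\delta$ polynomial in $\omega(G)^{-1}$, so that the sparse outcome together with \cref{thm:tree} delivers the anticomplete pair of the required polynomial relative size. I do not foresee a substantive obstacle beyond bookkeeping the dependence of $d$ on $T$.
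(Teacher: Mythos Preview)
Your proposal is correct and follows essentially the same approach as the paper: apply \cref{thm:arodl} with $\eps$ a negative power of $\omega(G)$ so that the dense outcome is ruled out by a clique-count, and then use \cref{thm:tree} in the sparse outcome. The only cosmetic difference is that the paper takes $\eps=\omega(G)^{-2}$ (and sets $d=3C\log\eta^{-1}$ explicitly) whereas you take $\eps=1/(3\omega(G))$, but the argument is the same.
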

	\begin{proof}
		Let $\eta>0$ be given by \cref{thm:tree} and let $C\ge2$ be given by \cref{thm:arodl} with $H=T$;
		we claim that $d:=3C\log\eta^{-1}$ suffices.
		To this end, let $G$ be a non-complete $T$-free graph, and let  $w:=\omega(G)$.
		If $\abs G\le w^d$ then we are done by taking $A,B$ to be the singletons of the endpoints of any nonedege of $G$;
		and so we may assume $\abs G>w^d$.
		By \cref{thm:arodl} with $\eps=w^{-2}$, $G$ has an $\eta$-sparse or $(1-w^{-2})$-dense induced subgraph $F$ with $\abs F\ge w^{-2C\log\eta^{-1}}\abs G>w^{d-2C\log\eta^{-1}}\ge w^2$ by the choice of $d$.
		Hence, if $F$ is $(1-w^{-2})$-dense then $w=\omega(G)\ge\omega(F)\ge\frac{\abs F}{1+w^{-2}\abs F}>\frac12w^2\ge w$, a contradiction.
		Thus $F$ is $\eta$-sparse;
		and so the choice of $\eta$ gives an anticomplete pair $(A,B)$ in $F$ (and so in $G$) with $\abs A,\abs B\ge \eta\abs F\ge \eta\cdot w^{-2C\log\eta^{-1}}\abs G\ge w^{-3C\log\eta^{-1}}\abs G=w^{-d}\abs G$.
		This proves \cref{thm:wanti}.
	\end{proof}
	We are now ready to show that the poly-$\rho$-bounding property is closed under disjoint union, via the following stronger result:
	\begin{theorem}
		\label{thm:rhoanti}
		For every forest $T$, there exists $d\ge2$ such that every non-complete $T$-free graph $G$ contains an anticomplete pair $(A,B)$ with $\rho(A),\rho(B)\ge \omega(G)^{-d}\rho(G)$.
	\end{theorem}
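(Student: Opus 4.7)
\medskip

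\noindent\textbf{Proof proposal.}
The plan is to bootstrap the vertex-count version (\cref{thm:wanti}) to the Hall-ratio version by localising to an induced subgraph $H$ of $G$ that witnesses $\rho(G)$, i.e.\ one with $\psi(H)=\rho(G)$, and then applying \cref{thm:wanti} inside $H$. I claim the same constant $d\ge2$ given by \cref{thm:wanti} suffices.

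Let $G$ be a non-complete $T$-free graph and fix an induced subgraph $H$ of $G$ with $\psi(H)=\rho(G)$. First I would dispose of the trivial case in which $H$ happens to be a complete graph: then $\rho(G)=\psi(H)=\abs H=\omega(H)\le\omega(G)$, so any nonedge $uv$ of $G$ gives singletons $A=\{u\},B=\{v\}$ with $\rho(A)=\rho(B)=1\ge\omega(G)^{-d}\cdot\omega(G)\ge\omega(G)^{-d}\rho(G)$, and we are done.

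Now suppose $H$ is not complete. Since $H$ is an induced subgraph of the $T$-free graph $G$, it is itself $T$-free, so \cref{thm:wanti} applied to $H$ yields an anticomplete pair $(A,B)$ in $H$ with $\abs A,\abs B\ge\omega(H)^{-d}\abs H$. Because $A,B\subset V(H)$ and $H=G[V(H)]$, the pair $(A,B)$ is also anticomplete in $G$. Moreover, $\alpha(G[A])\le\alpha(H)$ since $A\subset V(H)$, so
\[
\rho(A)\ge\psi(G[A])=\frac{\abs A}{\alpha(G[A])}\ge\frac{\abs A}{\alpha(H)}\ge\frac{\omega(H)^{-d}\abs H}{\alpha(H)}=\omega(H)^{-d}\psi(H)\ge\omega(G)^{-d}\rho(G),
\]
using $\omega(H)\le\omega(G)$ at the last step; the same bound holds for $\rho(B)$. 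This gives the desired anticomplete pair.

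I do not expect any real obstacle: the only content is the observation that the ratio $\abs A/\alpha(H)$ simultaneously controls $\psi(A)$ from below and equals $\omega(H)^{-d}\psi(H)=\omega(G)^{-d}\rho(G)$ up to the harmless $\omega(H)\le\omega(G)$ slack, so \cref{thm:wanti} transfers to $\rho$ essentially for free. The only mild subtlety is remembering to exclude the degenerate case where the $\rho$-extremal subgraph $H$ is a clique, which is handled by picking a nonedge of $G$ directly.
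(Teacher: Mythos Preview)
Your proof is correct and follows essentially the same route as the paper: pass to a $\rho$-extremal induced subgraph $H$ with $\psi(H)=\rho(G)$, apply \cref{thm:wanti} inside $H$, and use $\alpha(G[A])\le\alpha(H)$ to convert the vertex-count bound into a Hall-ratio bound. You are in fact slightly more careful than the paper, which simply writes ``by shrinking $G$ if necessary, we may assume $\psi(G)=\rho(G)$'' without explicitly treating the degenerate case where the extremal subgraph is a clique.
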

	\begin{proof}
		We claim that $d\ge2$ given by \cref{thm:wanti} suffices.
		To see this, by shrinking $G$ if necessary, we may assume $\psi(G)=\rho(G)$. Let $w:=\omega(G)$.
		By the choice of $d$, there is an anticomplete pair $(A,B)$ in $G$ with $\abs A,\abs B\ge w^{-d}\abs G$;
		and so $\rho(A)\ge \psi(A)=\abs A/\alpha(A)\ge w^{-d}\abs G/\alpha(G)=w^{-d}\psi(G)=w^{-d}\rho(G)$
		and similarly $\rho(B)\ge w^{-d}\rho(G)$.
		This proves \cref{thm:rhoanti}.
	\end{proof}
	The above argument can be straightforwardly modified to show that the off-diagonal \erh{} property is closed under disjoint union; we omit the proof.
	However proving this for the poly-$\chis$-bounding property requires a little more work, as follows.
	\begin{theorem}
		\label{thm:chisanti}
		For every forest $T$, there exists $d\ge2$ such that every non-complete $T$-free graph $G$ contains an anticomplete pair $(A,B)$ with $\chis(A),\chis(B)\ge \omega(G)^{-d}\chis(G)$.
	\end{theorem}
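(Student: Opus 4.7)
The plan is to mirror the proof of \cref{thm:rhoanti}, using LP duality for the fractional chromatic number to reduce to a weighted anticomplete pair problem on $T$-free graphs.

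By LP duality for $\chis(G)$, there exist nonnegative weights $y^*\colon V(G)\to\mathbb{R}_{\ge0}$ with $\sum_{v\in V(G)}y^*(v)=\chis(G)$ and $\sum_{v\in I}y^*(v)\le1$ for every stable set $I$ of $G$. For any $S\subset V(G)$, restricting $y^*$ to $S$ gives a feasible dual solution for $\chis(G[S])$, yielding $\chis(G[S])\ge\sum_{v\in S}y^*(v)$. Hence it suffices to find an anticomplete pair $(A,B)$ in $G$ with $\sum_{v\in A}y^*(v),\sum_{v\in B}y^*(v)\ge\omega(G)^{-d}\chis(G)$, as this would force $\chis(G[A]),\chis(G[B])\ge\omega(G)^{-d}\chis(G)$.

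To produce this pair, I would apply a weighted version of \cref{thm:wanti}: for every forest $T$, there exists $d\ge2$ such that every non-complete $T$-free graph $G$ equipped with any nonnegative vertex weighting $w$ contains an anticomplete pair $(A,B)$ with $w(A),w(B)\ge\omega(G)^{-d}w(V(G))$, where $w(S):=\sum_{v\in S}w(v)$. Applying this weighted analogue with $w=y^*$ then furnishes the required $(A,B)$ and completes the proof of \cref{thm:chisanti}.

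The weighted version of \cref{thm:wanti} itself is proved by repeating that theorem's argument verbatim, but invoking weighted analogues of \cref{thm:arodl,thm:tree}. In these weighted analogues one reformulates ``$\eta$-sparse'', ``$(1-\eps)$-dense'', and ``linear-size anticomplete'' with respect to $w$---for example, an anticomplete pair $(A,B)$ is weighted-linear if $w(A),w(B)\ge\eta w(V(G))$, and a subgraph $F$ is weighted-$\eta$-sparse if $w(N_F(v))\le\eta w(V(F))$ for all $v\in V(F)$---and both underlying arguments go through by systematically tracking $w$-weights in place of vertex counts, incurring only absolute-constant penalties in the exponents. The main obstacle is verifying these weighted analogues of \cref{thm:arodl,thm:tree} in full detail; this verification is tedious but requires no new ideas, and it is exactly the ``little more work'' alluded to just before the theorem statement. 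Crucially, a direct blow-up reduction to the unweighted case is unavailable here because stable-set blow-ups of $T$-free graphs need not be $T$-free once $T$ has false twins, which is why the weighted route is needed.
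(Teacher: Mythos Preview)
Your LP-duality reduction is correct: it suffices to find an anticomplete pair each side of which carries at least $\omega(G)^{-d}\chis(G)$ of the optimal dual weight. But you dismiss the blow-up route too quickly, and that is exactly what the paper uses. The trick is to replace $T$ by the forest $T'$ obtained by attaching a new pendant leaf to every vertex of $T$. Then $T'$ has no two non-adjacent vertices with the same neighbourhood (each vertex acquires a private leaf that distinguishes its neighbourhood from every other), so stable-set blow-ups of $T'$-free graphs remain $T'$-free; and since $T$ is an induced subgraph of $T'$, every $T$-free graph is automatically $T'$-free. The paper then takes integer dual weights $f$, blows $G$ up to a $T'$-free graph $J$ with $\psi(J)=\chis(G)$ and $\omega(J)\le\omega(G)$, applies \cref{thm:rhoanti} with $T'$ in place of $T$ to $J$, and projects the resulting anticomplete pair back to $G$.

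Your alternative requires weighted analogues of \cref{thm:arodl} and \cref{thm:tree}, which you assert but do not prove. The claim that both arguments ``go through by systematically tracking $w$-weights'' is far from self-evident---the proof of \cref{thm:tree} in particular is delicate---and in any case this unverified step is the entire substance of your proposal. Moreover, the standard device for deriving such weighted statements is itself blow-up, which again needs exactly the $T'$ trick. So the ``little more work'' the paper alludes to is not your weighted detour but the one-line observation that passing to $T'$ restores the blow-up reduction you ruled out.
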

	\begin{proof}
		Let $T'$ be the forest obtained from $T$ by adding a leaf to each vertex of $T$;
		then no two vertices in $T'$ have a common neighbour.
		Let $d\ge2$ be given by \cref{thm:rhoanti} with $T'$ in place of $T$; we claim that $d$ suffices.
		To see this, let $G$ be a non-complete $T$-free graph, and let $w:=\omega(G)$.
		If $\chis(G)\le w^{d}$ then we are done;
		and so we may assume $\chis(G)>w^{d}$.
		
		Now, let $f\colon V(G)\to\mab N_0$ be such that $f(V(G))>0$ and $f(V(G))\ge\chis(G)\cdot f(S)$ for all stable sets $S$ of $G$ where equality holds for some such $S$.
		Let $J$ be the graph obtained from $G$ by blowing up each $v\in V(G)$ by a stable set $S_v$ with $\abs{S_v}=f(v)$.
		Since $G$ is $T$-free, it is $T'$-free; and so $J$ is $T'$-free because $T'$ has no two vertices with a common neighbour.
		Also, $J$ is non-complete, $\abs J=f(V(G))$, $\alpha(J)=f(S)$ (so $\psi(J)=\chis(G)$), and $\omega(J)\le\omega(G)\le w$.
		By the choice of $d$, there is an anticomplete pair $(A,B)$ in $J$ with $\rho(A),\rho(B)\ge w^{-d}\psi(J)>1$.
		We may assume $J[A]$ and $J[B]$ are connected;
		and so each of them has at least one edge.
		Let $A':=\{v\in V(G):S_v\cap A\ne\emptyset\}$ and $B':=\{v\in V(G):S_v\cap B\ne\emptyset\}$.
		Since each of $J[A],J[B]$ is connected and has at least one edge, $A'$ and $B'$ are disjoint and anticomplete to each other in $G$.
		Moreover $\chis(A)\ge\rho(A)\ge w^{-d}\psi(J)=w^{-d}\chis(G)$
		and $\chis(B)\ge \rho(B)\ge w^{-d}\psi(J)=w^{-d}\chis(G)$.
		This proves \cref{thm:chisanti}.
	\end{proof}
	\section*{Acknowledgement}
	We would like to thank Alex Scott and Paul Seymour for helpful discussions and encouragement.
	
	\bibliographystyle{abbrv}

\begin{thebibliography}{10}
		
		\bibitem{MR1832443}
		N.~Alon, J.~Pach, and J.~Solymosi.
		\newblock Ramsey-type theorems with forbidden subgraphs.
		\newblock {\em Combinatorica}, 21(2):155--170, 2001.
		\newblock Paul Erd\H{o}s and his mathematics (Budapest, 1999).
		
		\bibitem{MR3524748}
		N.~Alon and J.~H. Spencer.
		\newblock {\em {The Probabilistic Method}}.
		\newblock Wiley Series in Discrete Mathematics and Optimization. John Wiley \&
		Sons, Inc., Hoboken, NJ, fourth edition, 2016.
		
		\bibitem{MR3343757}
		N.~Bousquet, A.~Lagoutte, and S.~Thomass{\'e}.
		\newblock The {E}rd{\H o}s--{H}ajnal conjecture for paths and antipaths.
		\newblock {\em J. Combin. Theory Ser. B}, 113:261--264, 2015.
		
		\bibitem{bfp2024}
		M.~Buci{\'c}, J.~Fox, and H.~Pham.
		\newblock Equivalence between Erd{\H o}s--Hajnal and polynomial R\" odl and
		Nikiforov conjectures, \href{https://arxiv.org/abs/2403.08303}{\tt
			arXiv:2403.08303}, 2024.
		
		\bibitem{MR3150572}
		M.~Chudnovsky.
		\newblock The {E}rd{\H o}s--{H}ajnal conjecture---a survey.
		\newblock {\em J. Graph Theory}, 75(2):178--190, 2014.
		
		\bibitem{MR4170220}
		M.~Chudnovsky, A.~Scott, P.~Seymour, and S.~Spirkl.
		\newblock Pure pairs. {I}. {T}rees and linear anticomplete pairs.
		\newblock {\em Adv. Math.}, 375:107396, 20, 2020.
		
		\bibitem{MR4563865}
		M.~Chudnovsky, A.~Scott, P.~Seymour, and S.~Spirkl.
		\newblock Erd{\H o}s--{H}ajnal for graphs with no 5-hole.
		\newblock {\em Proc. Lond. Math. Soc. (3)}, 126(3):997--1014, 2023.
		
		\bibitem{MR599767}
		P.~Erd\H{o}s and A.~Hajnal.
		\newblock On spanned subgraphs of graphs.
		\newblock In {\em Contributions to graph theory and its applications
			({I}nternat. {C}olloq., {O}berhof, 1977) ({G}erman)}, pages 80--96. Tech.
		Hochschule Ilmenau, Ilmenau, 1977.
		
		\bibitem{MR1031262}
		P.~Erd\H{o}s and A.~Hajnal.
		\newblock Ramsey-type theorems.
		\newblock {\em Discrete Appl. Math.}, 25(1-2):37--52, 1989.
		
		\bibitem{MR102081}
		P.~Erd{\H o}s.
		\newblock Graph theory and probability.
		\newblock {\em Canadian J. Math.}, 11:34--38, 1959.
		
		\bibitem{esperet}
		L.~Esperet.
		\newblock {\it Graph Colorings, Flows and Perfect Matchings}.
		\newblock Habilitation thesis, Universit\'e Grenoble Alpes,~2017.
		
		\bibitem{MR1829620}
		C.~Godsil and G.~Royle.
		\newblock {\em Algebraic graph theory}, volume 207 of {\em Graduate Texts in
			Mathematics}.
		\newblock Springer-Verlag, New York, 2001.
		
		\bibitem{MR382051}
		A.~Gy\'{a}rf\'{a}s.
		\newblock On {R}amsey covering-numbers.
		\newblock In {\em Infinite and finite sets ({C}olloq., {K}eszthely, 1973;
			dedicated to {P}. {E}rd\H{o}s on his 60th birthday), {V}ols. {I}, {II},
			{III}}, Colloq. Math. Soc. J\'{a}nos Bolyai, Vol. 10, pages 801--816.
		North-Holland, Amsterdam-London, 1975.
		
		\bibitem{MR1425208}
		A.~Gy{\'a}rf{\'a}s.
		\newblock Reflections on a problem of {E}rd{\H o}s and {H}ajnal.
		\newblock In {\em The mathematics of {P}aul {E}rd{\H o}s, {II}}, volume~14 of
		{\em Algorithms Combin.}, pages 93--98. Springer, Berlin, 1997.
		
		\bibitem{MR1369063}
		J.~H. Kim.
		\newblock The {R}amsey number {$R(3,t)$} has order of magnitude {$t^2/\log t$}.
		\newblock {\em Random Structures Algorithms}, 7(3):173--207, 1995.
		
		\bibitem{MR963118}
		A.~Lubotzky, R.~Phillips, and P.~Sarnak.
		\newblock Ramanujan graphs.
		\newblock {\em Combinatorica}, 8(3):261--277, 1988.
		
		\bibitem{2025thes}
		H.~T. Nguyen.
		\newblock {\em Induced Subgraph Density}.
		\newblock PhD thesis, Princeton University, May 2025.
		
		\bibitem{density3}
		T.~Nguyen, A.~Scott, and P.~Seymour.
		\newblock Induced subgraph density. III. Cycles and subdivisions,
		\href{https://arxiv.org/abs/2307.06379}{\tt arXiv:2307.06379}, 2023.
		
		\bibitem{density4}
		T.~Nguyen, A.~Scott, and P.~Seymour.
		\newblock Induced subgraph density. IV. New graphs with the Erd{\H
			o}s--{H}ajnal property, \href{https://arxiv.org/abs/2307.06455}{\tt
			arXiv:2307.06455}, 2023.
		
		\bibitem{density6}
		T.~Nguyen, A.~Scott, and P.~Seymour.
		\newblock Induced subgraph density. VI. Bounded VC-dimension, to appear in {\em
			Adv. Math.}, \href{https://arxiv.org/abs/2312.15572}{\tt arXiv:2312.15572},
		2023.
		
		\bibitem{density7}
		T.~Nguyen, A.~Scott, and P.~Seymour.
		\newblock Induced subgraph density. VII. The five-vertex path,
		\href{https://arxiv.org/abs/2312.15333}{\tt arXiv:2312.15333}, 2023.
		
		\bibitem{stable1}
		T.~Nguyen, A.~Scott, and P.~Seymour.
		\newblock Trees and near-linear stable sets.
		\newblock {\em Combinatorica}, 45(5):Paper No. 49, 2025.
		
		\bibitem{2024p5}
		T.~H. Nguyen.
		\newblock {On polynomially high-chromatic pure pairs},
		\href{https://arxiv.org/abs/2504.21127}{\tt arXiv:2504.21127}, 2025.
		
		\bibitem{MR837962}
		V.~R\"{o}dl.
		\newblock On universality of graphs with uniformly distributed edges.
		\newblock {\em Discrete Math.}, 59(1-2):125--134, 1986.
		
		\bibitem{scott2022}
		A.~Scott.
		\newblock Graphs of large chromatic number.
		\newblock In {\em Proceedings of the {I}nternational {C}ongress of
			{M}athematicians 2022. {V}ol. {VI}.}, pages 4660--4681. EMS Press, 2023.
		
		\bibitem{MR4174126}
		A.~Scott and P.~Seymour.
		\newblock A survey of {$\chi$}-boundedness.
		\newblock {\em J. Graph Theory}, 95(3):473--504, 2020.
		
		\bibitem{MR4472775}
		A.~Scott, P.~Seymour, and S.~Spirkl.
		\newblock Polynomial bounds for chromatic number. {III}. {E}xcluding a double
		star.
		\newblock {\em J. Graph Theory}, 101(2):323--340, 2022.
		
		\bibitem{MR491337}
		J.~Spencer.
		\newblock Asymptotic lower bounds for {R}amsey functions.
		\newblock {\em Discrete Math.}, 20(1):69--76, 1977/78.
		
		\bibitem{MR634555}
		D.~P. Sumner.
		\newblock Subtrees of a graph and the chromatic number.
		\newblock In {\em The theory and applications of graphs ({K}alamazoo, {M}ich.,
			1980)}, pages 557--576. Wiley, New York, 1981.
		
		\bibitem{MR3789676}
		N.~Trotignon and L.~A. Pham.
		\newblock {$\chi$}-bounds, operations, and chords.
		\newblock {\em J. Graph Theory}, 88(2):312--336, 2018.
		
	\end{thebibliography}

\end{document}